\newtheorem{thm}{Theorem}[section]
\newtheorem{cor}[thm]{Corollary}
\newtheorem{lemma}[thm]{Lemma}
\newtheorem{prop}[thm]{Proposition}
\theoremstyle{definition}
\newtheorem{defi}[thm]{Definition}
\newtheorem{ex}[thm]{Example}
\newtheorem{nota}[thm]{Notation}
\theoremstyle{remark}
\newtheorem{rem}[thm]{Remark}
\dedicatory{To the memory of Professor J\"urgen Herzog}
\begin{document}

% ------------------------------------------------------------------------

\title[rooted product of graphs and multi-clique corona graphs]{Algebraic study on rooted products of graphs and multi-clique corona graphs}

% ------------------------------------------------------------------------
\author{Yuji Muta and Naoki Terai}

\address[Y. Muta]{Department of Mathematics, Okayama University, 3-1-1 Tsushima-naka, Kita-ku, Okayama 700-8530 Japan.}

\email{p8w80ole@s.okayama-u.ac.jp}

\address[N. Terai]{Department of Mathematics, Okayama University, 3-1-1 Tsushima-naka, Kita-ku, Okayama 700-8530, Japan.}
	
\email{terai@okayama-u.ac.jp}

% ------------------------------------------------------------------------

\begin{abstract}
In this paper, we study rooted products of graphs from the perspective of combinatorial commutative algebra. For edge ideals, we introduce the 2-Cohen--Macaulayness with respect to a vertex and use it to investigate when edge ideals of rooted products of graphs are Cohen--Macaulay. Moreover, we completely determine when attaching a graph on at most six vertices to a given graph as rooted products, yields a Cohen--Macaulay edge ideal. Also, we define mulit-clique corona graphs as a generalization of clique-corona graphs and multi-whisker graphs. We prove that multi-clique corona graphs are vertex decomposable and hence sequentially Cohen--Macaulay. Also, we give formulas for the projective dimension and the Castelnuovo--Mumford regularity. 
\end{abstract}

% ------------------------------------------------------------------------

\subjclass[2020]{Primary: 13F55, 13H10, 05C75; Secondary: 13D45, 05C90, 55U10}

% ------------------------------------------------------------------------

\keywords{rooted product, multi-clique corona graph, edge ideal, Cohen--Macaulay, Serre's condition, vertex decomposable, sequentially Cohen--Macaulay, Betti number, regularity, projective dimension}

% ------------------------------------------------------------------------

\thanks{}
	
% ------------------------------------------------------------------------

% ------------------------------------------------------------------------

\maketitle

%%%%%%%%%%%%%%%%%%%%%%%%%%%%%%%%%%%%%%%%%%%%%%%%%%%%%%%%%%%%%%%%%%%%%%%%%%%%%%%%%%%%%%%%%
\section{Introduction}

Studying the edge ideals of finite simple graphs is one of the central topics in combinatorial commutative algebra. In \cite{v1}, Villarreal introduced edge ideals of finite simple graphs and characterized precisely when the edge ideal of a tree is Cohen--Macaulay. Simis, Vasconcelos, and Villarreal then set up a ring-theoretic framework for edge ideals \cite{svv}. Building on this, Herzog and Hibi gave a complete characterization of Cohen--Macaulay bipartite graphs \cite{hh}, and Herzog, Hibi, and Zheng subsequently characterized the chordal case \cite{hhz}. Taken together, these contributions have provided the foundation for ongoing research on the ring-theoretic properties of edge ideals (\cite{cdkkv, cn, crt, fh, ft, hhm, hhko, hmt, hp, ht, mpt, n, t2, t3, vv}).

Throughout, a graph means a finite undirected graph without loops and multiple edges. Let $G=(V(G), E(G))$ be a graph, where $V(G)=\{x_{1},\ldots, x_{n}\}$ is the vertex set of $G$ and $E(G)$ is the edge set of $G$ and let $\Bbbk$ be an arbitrary field and $S=\Bbbk[x_{1},\ldots, x_{n}]$ be a polynomial ring over the field $\Bbbk$. Then the ideal of $S$ $$I(G)=(x_{i}x_{j}\,\,: \{x_{i},x_{j}\}\in E(G))$$ is called the {\it edge ideal} of $G$. Villarreal showed that attaching a whisker to every vertex of a graph makes its edge ideal Cohen--Macaulay \cite[Proposition 2.1]{v1}. This has been generalized in several directions, including clique-whiskered graphs \cite{cn}, clique corona graphs \cite{hp}, multi-whisker graphs \cite{mpt}, and multi-clique-whiskered graphs \cite{hhm, mt2}. 

In the first part of this paper, we investigate the edge ideals of rooted products of graphs. The edge ideals of rooted products of graphs have been studied in \cite{mfy} . Let us recall the definition of rooted products of graphs, which was introduced by Godsil and McKay in \cite[Definition 1.1]{gm}. Let $H_{1}, \ldots, H_{m}$ be graphs with $V(H_{i})\cap V(H_{j})=\emptyset$ for all $i\neq j$, and fix a vertex $x_{i} \in V(H_{i})$ for each $i = 1, \ldots, m$. Let $G_0$ be a graph on the vertex set $X_{[m]}=\{x_{1}, \ldots, x_{m}\}$, and set $\mathcal{H} = \{H_{1}, \ldots, H_{m}\}$. Then, the graph $G_{0}(\mathcal{H})$, whose vertex set is $V(G_{0}(\mathcal{H}))=\bigcup_{1\leq i\leq m}V(H_{i})$ and whose edge set is $E(G_{0}(\mathcal{H}))=E(G_{0})\cup\left(\bigcup_{1\leq i\leq m}E(H_{i})\right)$ is called the rooted product of $G_{0}$ by $\mathcal{H}$. We always assume that the following conditions hold for $G_{0}(\mathcal{H})$:
\begin{itemize}
\item[$(\ast)$] $m\geq 2$ and $H_{i}$ is connected for all $i$ and $G_{0}(\mathcal{H})$ is not the empty graph. 
\end{itemize} 
We now turn to the corona product, a construction closely related to the rooted product. For a graph $G$ on the vertex set $X_{[h]}=\{x_{1},\ldots, x_{h}\}$, we set $\mathcal{H}=\{H_{i}\,\,: x_{i}\in V(G)\},$ where $H_{i}$ is a non-empty graph indexed by the vertex $x_{i}$. The {\it corona graph} $G\circ\mathcal{H}$ of $G$ and $\mathcal{H}$ is the disjoint union of $G$ and $H_{i}$, with additional edges joining each vertex $x_{i}$ to all vertices $H_{i}$. In particular, $G\circ\mathcal{H}$ is called the {\it clique corona graph}, if $H_{i}$ is the complete graph for all $i$. 

In the corona construction, at each vertex of the base graph one attaches a graph and connects that vertex by edges to every vertex of the attached graph. By contrast, in the rooted product, the attachment is by identification only, and no additional edges are added between the base graph and the attached graphs. Notice that corona graphs can be expressed as rooted products of graphs. 

Hoang and Pham proved that a corona graph is Cohen--Macaulay if and only if it is a clique corona graph \cite[Theorem 2.6]{hp}. Accordingly, it is natural to ask which graphs can be attached to a given graph as rooted products, yield a graph whose the edge ideal is Cohen--Macaulay. In view of this, we introduce the following notion: 

\begin{defi}
Let $G$ be a graph, $x$ be a vertex of $G$ and let $r\geq2$. Then, we call $G$ satisfies {\it 2-Serre's condition $(S_{r})$ with respect to $x$}, if the following conditions are satisfied: 
\begin{enumerate}
\item $G$ satisfies Serre's condition $(S_{r})$. 
\item $G-x$ satisfies Serre's condition $(S_{r})$ with $\dim S/I(G)=\dim S/(I(G)+x)$. 
\end{enumerate}
If $G$ satisfies 2-Serre's condition $(S_{r})$ for all vertices, then we call $G$ satisfies {\it 2-Serre's condition $(S_{r})$}.  

In particular, we call $G$ is {\it 2-Cohen--Macaulay with respect to $x$}, if the following conditions are satisfied: 
\begin{enumerate}
\item $G$ is Cohen--Macaulay. 
\item $G-x$ is Cohen--Macaulay with $\dim S/I(G)=\dim S/(I(G)+x)$. 
\end{enumerate}
If $G$ is 2-Cohen--Macaulay for all vertices, then $G$ is called  {\it 2-Cohen--Macaulay.} 

\end{defi}

Using this notion, we prove the following theorem, which characterizes when the edge ideals of rooted products of graphs satisfy Serre's condition $(S_{r})$ and, in particular, when they are Cohen--Macaulay:

\begin{thm}[\mbox{\rm see, Theorem \ref{Serre}}]
In the setting above and assume that the condition $(\ast)$ holds and let $r\geq2$. Then, the following conditions are equivalent: 
\begin{enumerate}
\item $H_{i}$ satisfies 2-Serre's condition $(S_{r})$ with respect to $x_{i}$ for all $i$. 
\item $G_{0}(\mathcal{H})$ satisfies Serre's condition $(S_{r})$ for every graph $G_{0}$ on the vertex set $X_{[m]}$. 
\end{enumerate}
Moreover, under the assumption that $|V(H_{i})| \geq 2$ for all $i$, the following condition is equivalent to the conditions above:
\begin{enumerate}[start=3]
\item $G(\mathcal{H})$ satisfies Serre's condition $(S_{r})$ for every graph $G$ in $\mathcal{G}$, 
\end{enumerate}
where $\mathcal{G}$ is a family of graphs on the vertex set $X_{[m]}$ which satisfies the following condition: 

\begin{center}
For any $i=1,\ldots, m$, there exists $G\in\mathcal{G}$ such that $x_{i}$ is not an isolated vertex in $G$. 
\end{center}
\end{thm}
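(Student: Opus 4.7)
The strategy is to establish $(1) \Leftrightarrow (2)$ (in general) and then $(3) \Leftrightarrow (2)$ (under the extra hypothesis $|V(H_i)| \geq 2$). The implication $(2) \Rightarrow (3)$ is immediate since each $G \in \mathcal{G}$ is a graph on $X_{[m]}$; the nontrivial implications are $(1) \Rightarrow (2)$, $(2) \Rightarrow (1)$, and $(3) \Rightarrow (2)$, the last of which I would derive via $(3) \Rightarrow (1) \Rightarrow (2)$.

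For $(1) \Rightarrow (2)$ I plan to induct on $m \geq 2$. When $G_0$ is edgeless, $G_0(\mathcal{H}) = \bigsqcup_i H_i$ and its edge-ideal quotient is the tensor product $\bigotimes_i \Bbbk[V(H_i)]/I(H_i)$; Serre's condition is preserved by tensor products of graded $\Bbbk$-algebras, so $(S_r)$ follows from (1)(a). If $G_0$ has an edge incident to $x_i$, use the short exact sequence
\[
0 \longrightarrow S/(I(G_0(\mathcal{H})) : x_i) \longrightarrow S/I(G_0(\mathcal{H})) \longrightarrow S/(I(G_0(\mathcal{H})) + (x_i)) \longrightarrow 0.
\]
The right quotient is the edge-ideal ring of $G_0(\mathcal{H}) - x_i = (H_i - x_i) \sqcup G_0'(\mathcal{H}')$ with $G_0' = G_0 - x_i$ and $\mathcal{H}' = \mathcal{H}\setminus\{H_i\}$, and it satisfies $(S_r)$ by combining (1)(b) for $H_i$ with the induction hypothesis for $G_0'(\mathcal{H}')$. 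The colon quotient corresponds to $G_0(\mathcal{H}) - N[x_i]$, which decomposes as the disjoint union of $H_i - N_{H_i}[x_i]$ (a link in the independence complex of $H_i$, hence $(S_r)$ by (1)(a)), of $H_j - x_j$ for each $x_j \in N_{G_0}(x_i)$ (by (1)(b)), and of a smaller rooted product handled by induction. The dimension equality $\dim S/I(H_i) = \dim S/(I(H_i) + (x_i))$ in condition (c) of 2-Serre's $(S_r)$ synchronizes dimensions in the sequence, so the depth lemma applied at each localization lifts $(S_r)$ from the end terms to the middle.

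For $(2) \Rightarrow (1)$ and $(3) \Rightarrow (1)$ I combine the same link technique. Condition (1)(a) follows either by taking $G_0 = \emptyset$ in (2), or by selecting $F \subseteq \bigcup_{k \neq i}(V(H_k)\setminus\{x_k\})$ as the union of independent dominating sets $D_k$ of $H_k$ avoiding $x_k$ (such $D_k$ exist because each $H_k$ is connected with $|V(H_k)| \geq 2$: extend any neighbor of $x_k$ to a maximal independent set of $H_k$); the link of $F$ in the independence complex of $G(\mathcal{H})$ equals exactly the independence complex of $H_i$, and $(S_r)$ is inherited by links. Condition (1)(b) follows by augmenting $F$ with $x_j$, a $G$-neighbor of $x_i$ (which exists by the defining property of $\mathcal{G}$ for (3), or by choosing $G_0$ to be the single edge $\{x_i, x_j\}$ for (2)), together with a maximal independent set of $H_j - N_{H_j}[x_j]$; the augmentation forces $x_i$ out of the link, which then equals the independence complex of $H_i - x_i$. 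The dimension equality in (1)(c) is deduced from unmixedness of $I(G(\mathcal{H}))$ (implied by $(S_r)$ with $r \geq 2$) by comparing the sizes of maximum independent sets of $G(\mathcal{H})$ containing versus avoiding $x_i$.

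The main obstacle I anticipate is implementing the depth comparison argument for Serre's $(S_r)$ rather than for Cohen--Macaulayness, which requires dimensions to match at every localization --- precisely the content of condition (c) in the definition of 2-Serre's $(S_r)$. A secondary subtlety is that the smaller rooted products $G_0'(\mathcal{H}')$ arising in the induction may violate $(\ast)$ (for instance $m$ may drop to $1$, or $G_0'$ may acquire isolated vertices), which I would absorb by slightly broadening the induction hypothesis and invoking the join decomposition of the independence complex for disjoint components.
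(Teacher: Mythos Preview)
Your plan is correct, and for the converse directions $(2)\Rightarrow(1)$ and $(3)\Rightarrow(1)$ it is essentially identical to the paper's argument (the paper also takes links at suitable faces---facets $A_j$ of $\Delta(H_j)$ avoiding $x_j$, respectively the single vertex $x_j$---to isolate $\Delta(H_i)$ and $\Delta(H_i-x_i)$, and then invokes the well-coveredness Proposition to get the dimension equality).

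The real difference is in $(1)\Rightarrow(2)$. The paper does \emph{not} use the colon/quotient short exact sequence with a localized depth lemma. Instead it proves a stand-alone Lemma~3.8: if $G-x$ and $G-N[x]$ are $(S_r)$ with $\alpha(G-x)=\alpha(G-N[x])+1$, then $G$ is $(S_r)$. The proof of that lemma goes through the Terai link-homology characterization of $(S_r)$ and the Mayer--Vietoris sequence for the covering $\Delta(G)=\Delta(G-x)\cup\bigl(\{x\}*\Delta(G-N[x])\bigr)$. Your SES route reaches the same conclusion, but be aware of a step you are currently glossing over: for the depth lemma at each localization to yield $\text{depth}(S/I)_{\mathfrak p}\ge\min\{r,\dim(S/I)_{\mathfrak p}\}$, you need all three rings $S/I$, $S/(I:x_i)$, $S/(I+(x_i))$ to be \emph{equidimensional of the same dimension}, so that $\dim(\,\cdot\,)_{\mathfrak p}$ agrees at every prime containing $I+(x_i)$. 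In the paper this is exactly the role of Proposition~3.5 (well-coveredness of $G_0(\mathcal H)$ from 2-purity of the $H_i$) together with the combinatorial check $\alpha(G_0(\mathcal H))=\alpha(G_0(\mathcal H)-x_i)=\alpha(G_0(\mathcal H)-N[x_i])+1$; you should invoke the same preliminary step before running your depth argument. A minor precision point: $S/(I(G_0(\mathcal H)):x_i)$ is the Stanley--Reisner ring of the \emph{cone} $\{x_i\}*\Delta(G_0(\mathcal H)-N[x_i])$, not of $\Delta(G_0(\mathcal H)-N[x_i])$ itself; this is harmless since cones preserve $(S_r)$, but it affects the dimension count by $+1$.
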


As a corollary, we obtain that the following characterization of $G_{0}(\mathcal{H})$:
\begin{cor}[\mbox{\rm see, Corollaries \ref{cor about Serre}, \ref{cor about CM}}]\label{Intro of CM}
In the setting above and assume that the condition $(\ast)$ holds and let $r\geq2$.
Suppose that $|V(H_{i})|\geq2$ for all $i$. Fix a graph $G_{0}$ on the vertex set $X_{[m]}$ without isolated vertices. Then, the following conditions are equivalent: 
\begin{enumerate}
\item $H_{i}$ satisfies 2-Serre's condition $(S_{r})$ with respect to $x_{i}$ for all $i$. 
\item $G_{0}(\mathcal{H})$ satisfies Serre's condition $(S_{r})$. 
\end{enumerate}
In particular, the following conditions are equivalent: 
\begin{enumerate}
\item $H_{i}$ is 2-Cohen--Macaulay with respect to $x_{i}$ for all $i$. 
\item$G_{0}(\mathcal{H})$ is Cohen--Macaulay. 
\end{enumerate}
\end{cor}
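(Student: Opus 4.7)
The plan is to deduce both equivalences directly from Theorem~\ref{Serre} by choosing the singleton family $\mathcal{G}=\{G_{0}\}$. The only hypothesis of Theorem~\ref{Serre}(3) on $\mathcal{G}$ is that each vertex $x_{i}$ be non-isolated in at least one graph of $\mathcal{G}$; since $G_{0}$ has no isolated vertices by assumption, $\mathcal{G}=\{G_{0}\}$ satisfies this trivially. Together with the standing assumption $|V(H_{i})|\geq 2$, the full theorem applies, and for this $\mathcal{G}$ statement (3) of the theorem specializes to ``$G_{0}(\mathcal{H})$ satisfies $(S_{r})$''. The equivalence (1)$\Leftrightarrow$(3) then gives precisely the first equivalence asserted in the corollary.

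For the Cohen--Macaulay statement, I would quantify the first equivalence over all $r\geq 2$. A graded quotient of a polynomial ring such as $S/I(G)$ is Cohen--Macaulay if and only if it satisfies Serre's condition $(S_{r})$ for every $r\geq 2$, so ``$G_{0}(\mathcal{H})$ is Cohen--Macaulay'' reformulates as ``$G_{0}(\mathcal{H})$ satisfies $(S_{r})$ for every $r\geq 2$''. On the other side, the dimension equality $\dim S/I(H_{i})=\dim S/(I(H_{i})+x_{i})$ appearing in both the 2-Cohen--Macaulay and 2-Serre definitions is independent of $r$, so ``$H_{i}$ is 2-Cohen--Macaulay with respect to $x_{i}$'' is equivalent to ``$H_{i}$ satisfies 2-Serre's $(S_{r})$ with respect to $x_{i}$ for every $r\geq 2$''. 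Combining these two reformulations with the first equivalence, taken simultaneously over all $r\geq 2$, yields the Cohen--Macaulay equivalence.

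The substance of this corollary is carried entirely by Theorem~\ref{Serre}; there is no real combinatorial or homological obstacle to overcome. The only step that requires a moment's attention is identifying the correct choice of family, namely $\mathcal{G}=\{G_{0}\}$, and verifying that the no-isolated-vertices hypothesis on $G_{0}$ is exactly what is needed to make this singleton family legal in Theorem~\ref{Serre}(3). Everything else is a routine passage from the $(S_{r})$-version to the Cohen--Macaulay version via the standard characterization of Cohen--Macaulayness through the full Serre hierarchy.
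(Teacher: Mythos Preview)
Your proposal is correct and matches the paper's approach. The paper states Corollaries~\ref{cor about Serre} and~\ref{cor about CM} without proof, treating them as immediate specializations of Theorem~\ref{Serre} and Corollary~\ref{CM} respectively; your choice of the singleton family $\mathcal{G}=\{G_{0}\}$ is exactly the intended specialization, and your passage from the $(S_{r})$-statement to the Cohen--Macaulay statement by quantifying over all $r\geq 2$ is the same mechanism by which Corollary~\ref{CM} is deduced from Theorem~\ref{Serre}.
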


By Corollary \ref{Intro of CM}, the Cohen--Macaulayness of the edge ideal may be determined by decomposing a graph. Moreover, we completely determine when attaching a graph on at most six vertices to a given graph as rooted products, yields a Cohen--Macaulay edge ideal by using the lists given by Villarreal \cite[pp. 635--637]{v2}. Also, the equivalence between being pure vertex decomposable and being Cohen--Macaulay is known to hold in several classes of graphs; for example, well-covered chordal graphs \cite{hhz}, very well-covered graphs \cite[Theorem 1.1]{mmcrty}, Cameron--Walker graphs \cite[Theorem 1.3]{hhko}, permutation graphs \cite[Theorem 2.3]{fm}. We also prove that this equivalence holds for rooted products of graphs under certain conditions on $H_{1},\ldots,H_{m}$. (see, Theorem \ref{eqiv between CM and vd})

In the second part of this paper, we study a generalization of clique corona graphs and multi-whisker graphs. Pournaki and the authors of this paper introduced multi-whisker graphs as a generalization of whisker graphs by attaching some edges to each vertex \cite{mpt}. 

As a further generalization of these graphs, we define a multi-clique corona graphs by attaching some complete graphs to each vertex, namely, for a graph $G$ on the vertex set $X_{[h]}=\{x_{1},\ldots, x_{h}\}$ and positive integers $n_{1},\ldots, n_{h}$, also, we set $\mathcal{H}=\cup_{1\leq i\leq m}\{H_{i}^{1},\ldots, H_{i}^{n_{i}}\},$ where $H_{i}^{j}$ is a nonempty graph indexed by the vertex $x_{i}$. 
we define a {\it multi-corona graph} $G\circ\mathcal{H}$ of $G$ and $\mathcal{H}$ is the disjoint union of $G$ and $H_{i}^{1},\ldots, H_{i}^{n_{i}}$, with additional edges joining each vertex $x_{i}$ to all vertices $H_{i}^{j}$. We call $G\circ\mathcal{H}$ is a {\it multi-clique corona graph}, if $H_{i}^{j}$ is the complete graph for all $i$ and $j$. 

We prove that, after an appropriate transformation of a multi-clique corona graph into a multi-whisker graph, the Betti numbers of the edge ideal coincide with those of the edge ideal of a multi-whisker graph (see, Theorem \ref{betti}). Consequently, we obtain the following theorem as generalizations of results of multi-whisker graphs: 

\begin{thm}[\mbox{\rm see, Corollaries \ref{reg}, \ref{pd} and Theorem \ref{seq CM}}]
In the setting above, if $G\circ\mathcal{H}$ is multi-clique corona graph, then we have the following statements: 
\begin{enumerate}
\item The regularity of $S/I(G\circ\mathcal{H})$ is equal to 
\begin{align*}
{\rm reg}(S/I(G\circ\mathcal{H}))&= {\rm im}(G\circ\mathcal{H}) \\
&=|\{H_{i}^{j}\,\,: H_{i}^{j}\mbox{ is not the complete graph }K_{1}\}|. 
\end{align*}
\item $G\circ\mathcal{H}$ is vertex decomposable. \\
\item The projective dimension of $S/I(G\circ\mathcal{H})$ is equal to 
$${\rm pd}(S/I(G\circ\mathcal{H}))={\rm bight}I(G\circ\mathcal{H}).$$
\end{enumerate}
\end{thm}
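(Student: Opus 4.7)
The plan is to leverage Theorem~\ref{betti} to reduce parts (1) and (3) to the corresponding formulas for multi-whisker graphs already established in \cite{mpt}, and to prove (2) by a direct shedding-vertex induction. Write $G\circ\mathcal{H}'$ for the multi-whisker graph produced from $G\circ\mathcal{H}$ by the transformation of Theorem~\ref{betti}, so that $I(G\circ\mathcal{H})$ and $I(G\circ\mathcal{H}')$ share the same graded Betti table. Since regularity and projective dimension are both read off from this table, we immediately obtain ${\rm reg}(S/I(G\circ\mathcal{H})) = {\rm reg}(S'/I(G\circ\mathcal{H}'))$ and ${\rm pd}(S/I(G\circ\mathcal{H})) = {\rm pd}(S'/I(G\circ\mathcal{H}'))$. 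Part (2), being a combinatorial property of the independence complex, cannot be detected by the Betti numbers alone and must be argued separately.

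For (1), the multi-whisker result of \cite{mpt} identifies the regularity of $S'/I(G\circ\mathcal{H}')$ with its induced matching number, so it suffices to verify the stated combinatorial formula for ${\rm im}(G\circ\mathcal{H})$. The relevant induced matching is built by selecting, from each $H_i^j$ with $|V(H_i^j)| \geq 2$, a single internal edge; these edges are pairwise vertex-disjoint, and any chord between them in $G\circ\mathcal{H}$ would necessarily pass through some $x_i$, which is not an endpoint. Optimality is verified by a short case analysis exploiting the clique structure of the $H_i^j$. For (3), \cite{mpt} gives ${\rm pd}(S'/I(G\circ\mathcal{H}')) = {\rm bight}(I(G\circ\mathcal{H}'))$, and it remains to check that maximum-cardinality minimal vertex covers of $G\circ\mathcal{H}$ realize ${\rm pd}(S/I(G\circ\mathcal{H}))$; this is done by exhibiting, in each clique $\{x_i\}\cup V(H_i^j)$, the appropriate choice of all-but-one vertex and then summing over $i,j$.

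For (2), we proceed by induction on $|V(G\circ\mathcal{H})|$. When every $H_i^j = K_1$, the graph $G\circ\mathcal{H}$ is a multi-whisker graph, vertex decomposable by \cite{mpt}. Otherwise fix $i,j$ with $|V(H_i^j)| \geq 2$ and any $y \in V(H_i^j)$: every $z \in V(H_i^j)\setminus\{y\}$ satisfies $N[z] = V(H_i^j)\cup\{x_i\} = N[y]$, so Woodroofe's criterion makes $y$ a shedding vertex. The deletion $(G\circ\mathcal{H}) - y$ is again a multi-clique corona graph, with $H_i^j$ shrunk to the complete graph on $V(H_i^j)\setminus\{y\}$, and is vertex decomposable by the inductive hypothesis. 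The link $(G\circ\mathcal{H}) - N[y]$ is the disjoint union of the smaller multi-clique corona on $G_0 - x_i$ with the restricted family $\bigcup_{k\neq i}\{H_k^1,\ldots,H_k^{n_k}\}$ together with the complete graphs $H_i^{j'}$ for $j'\neq j$, now standing as isolated components. Both summands are vertex decomposable, and a disjoint union of vertex decomposable graphs is itself vertex decomposable since its independence complex is the join of the individual independence complexes. The main subtlety lies in this final bookkeeping step --- verifying that the excised neighborhood produces exactly this disjoint union --- together with the clique-by-clique reconciliation of the induced matching and big height of $G\circ\mathcal{H}$ with those of $G\circ\mathcal{H}'$, where the interaction between $K_1$ summands and larger cliques must be handled carefully.
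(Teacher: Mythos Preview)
Your arguments for (1) and (2) follow the paper. For (1), both reduce to the multi-whisker graph $G'$ via Theorem~\ref{betti} and invoke \cite[Corollary~4.4]{mpt}; for (2), the paper runs the same shedding-vertex induction, differing only in the base case (it uses the clique-corona case $n_i=1$ and cites \cite{hhko}, rather than the multi-whisker case $H_i^j=K_1$ via \cite{mpt}).

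For (3) you take a different route from the paper and leave a gap. The paper does \emph{not} go through $G'$ here: having established vertex decomposability in (2), it observes that $G\circ\mathcal{H}$ is sequentially Cohen--Macaulay and applies \cite[Remark~5.7]{ds} directly to conclude ${\rm pd}={\rm bight}$. Your route instead yields ${\rm pd}(S/I(G\circ\mathcal{H}))={\rm bight}\,I(G')$, so you still owe the equality ${\rm bight}\,I(G')={\rm bight}\,I(G\circ\mathcal{H})$. Your sketch ``choose all-but-one from each clique $\{x_i\}\cup V(H_i^j)$ and sum over $i,j$'' does not establish this: the cliques for fixed $i$ all share $x_i$, so the sets are not disjoint, and the na\"ive union $\bigcup_{i,j}V(H_i^j)$ need not even be a vertex cover (take $G=K_2$ with each $H_i^j=K_1$; the base edge $x_1x_2$ is uncovered). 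The maximum-size minimal vertex cover of $G\circ\mathcal{H}$ actually depends on an optimal choice of an independent set $A\subseteq V(G)$ (include $x_i$ for $i\in A$, include all of $V(H_i^j)$ for $i\notin A$), and matching this against ${\rm bight}\,I(G')$ requires a genuine computation. It can be carried out, but the paper's route through (2) and sequential Cohen--Macaulayness avoids this bookkeeping entirely.
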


%%%%%%%%%%%%%%%%%%%%%%%%%%%%%%%%%%%%%%%%%%%%%%%%%%%%%%%%%%%%%%%%%%%%%%%%%%%%%%%%%%%%%%%%%

\section{Preliminaries}
In this section, we recall several definitions and known results from graph theory and combinatorial commutative algebra that will be used throughout this paper. We refer the reader to \cite{bh, hh2, v0, v2} for the detailed information on combinatorial and algebraic background. 

\subsection{Combinatorial background.}
Let $G=(V(G), E(G))$ be a graph, where $V(G)$ denotes the vertex set and $E(G)$ denotes the edge set of $G$. For a subset $W$ of $V(G)$, we denote the induced subgraph on $W$ by $G|_{W}$. A subset $C$ of $V(G)$ is called {\it vertex cover}, if $C\cap e\neq\emptyset$ for every edge $e$ of $G$. In particular, if $C$ is a vertex cover that is minimal with respect to inclusion, then $C$ is called a {\it minimal vertex cover} of $G$. Let ${\rm Min}(G)$ be the set of minimal vertex covers of $G$. A subset $A$ of $V(G)$ is called an {\it independent set}, if no two vertices in $A$ are adjacent to each other. In particular, if $A$ is an independent set of $G$ that is maximal with respect to inclusion, then $A$ is called a {\it maximal independent set} of $G$. It is known that $${\rm ht}(I(G))=\min\{|C|\,\,: C\mbox{ is a minimal vertex cover of }G\}$$and$${\rm bight}(I(G))=\max\{|C|\,\,: C\mbox{ is a minimal vertex cover of }G\}.$$
Next, let us recall the definition of Stanley--Reisner rings. Let $n$ be a positive integer and let $[n]=\{1,\ldots, n\}$. A {\it simplicial complex }$\Delta$ on $[n]$ is a nonempty subset of the power set $2^{[n]}$ of $[n]$ such that $G\in\Delta$, if $G\subset F$ and $F\in \Delta$. An element of $\Delta$ is called face. The {\it Stanley--Reisner ideal} $I_{\Delta}$ of $\Delta$ is the ideal generated by squarefree monomials corresponding to minimal non-faces of $\Delta$, namely,$$I_{\Delta}=(x_{i_{1}}\cdots x_{i_{r}}\,\,: \{i_{1},\ldots, i_{r}\}\mbox{ is a minimal non-face of }\Delta).$$For simplicial complexes $\Delta$ and $\Delta^{\prime}$ whose vertex sets are disjoint, the {\it join} $\Delta*\Delta^{\prime}$ is the simplicial complex whose faces $F\cup F^{\prime}$, where $F\in\Delta$ and $F^{\prime}\in\Delta^{\prime}$. Let $\Delta$ be a $(d-1)$-dimensional simplicial complex. For the relationship between edge ideals and Stanley--Reisner ideals, we have the following notion called an independence complex. The {\it independence complex} of $G$ is the set of independent sets of $G$, which forms a simplicial complex $\Delta(G)$. It is known that the edge ideal of $G$ coincides with the Stanley--Reisner ideal of $\Delta(G)$ (see, for example, \cite[p. 73, Lemma 31]{v0}). For a graph $G$, a subset $M$ of $E(G)$ is called a {\it matching}, if no two  edges in $M$ share a common vertex. Then the {\it matching number} of $G$, denoted by ${\rm m}(G)$ is defined by $${\rm m}(G)=\max\{|M|\,\,: M\mbox{ is a matching of }G\}.$$ Moreover, if there is no edge in $E(G)\setminus M$ that is contained in the union of edges of $M$, then $M$ is called an {\it induced matching}. Then, the {\it induced matching number} of $G$, denoted by ${\rm im}(G)$, is defined as $${\rm im}(G)=\max\{|M|\,\,: M\mbox{ is an induced matching of }G\}.$$Moreover, let us recall the definitions of vertex decomposable and sequentially Cohen--Macaulay complexes, as defined in \cite{bw} and \cite{s}.
For a simplicial complex $\Delta$ on the vertex set $V=\{x_{1},\ldots, x_{n}\}$, $\Delta$ is called {\it vertex decomposable}, if either: 
\begin{enumerate}
\item $\Delta=\langle\{x_{1},\ldots, x_{n}\}\rangle$, or $\Delta=\emptyset$. 
\item There exists a vertex $x\in V$ such that ${\rm link}_{\Delta}(\{x\})$ and ${\rm del}_{\Delta}(\{x\})$ are vertex decomposable, and every facet of ${\rm del}_{\Delta}(\{x\})$ is a facet of $\Delta$,
\end{enumerate}
where $${\rm link}_{\Delta}(\{x\})=\{F\in\Delta\,\,: \{x\}\cap F=\emptyset\mbox{ and }\{x\}\cup F\in\Delta\}$$and$${\rm del}_{\Delta}(\{x\})=\{F\in\Delta\,\,: \{x\}\cap F=\emptyset\}.$$Moreover, for a graded module $M$ over $S=\Bbbk[x_{1},\ldots, x_{n}]$, $M$ is called {\it sequentially Cohen--Macaulay}, if there exists a filtration$$0=M_{0}\subset M_{1}\subset\cdots\subset M_{t}=M$$of $M$ by graded $S$-modules such that $\dim M_{i}/M_{i-1}<\dim M_{i+1}/M_{i}$, and $M_{i}/M_{i-1}$ is Cohen--Macaulay for all $i$. For a graph $G$, we say that $G$ is {\it vertex decomposable} , if the independence complex $\Delta(G)$ is vertex decomposable. Also, $G$ is called {\it sequentially Cohen--Macaulay} if $S/I(G)$ is sequentially Cohen--Macaulay. 

\subsection{Algebraic background}
Let us recall the definition of Serre's condition $(S_{r})$. For a ring $R$, an $R$-module $M$ and a positive integer $r$, $M$ satisfies {\it Serre's condition} $(S_{r})$ if the inequality ${\rm depth}(M_{\mathfrak{p}})\geq\min\{r,\dim M_{\mathfrak{p}}\}$ holds for every $\mathfrak{p}\in{\rm Spec}(R)$. One can easily check that $M$ is Cohen--Macaulay if and only if $M$ satisfies Serre's condition $(S_{\dim M})$. Also, let $S=\Bbbk[x_{1},\ldots, x_{n}]$ be a polynomial ring in $n$ variables over an arbitrary field $\Bbbk$ and let $M$ be a finitely generated graded $S$-module. Then, $M$ admits a {\it graded minimal free resolution} of the form $$0\rightarrow\bigoplus_{j\in\mathbb{Z}}S(-j)^{\beta_{p,j}(M)}\rightarrow\cdots\rightarrow\bigoplus_{j\in\mathbb{Z}}S(-j)^{\beta_{0,j}(M)}\rightarrow M\rightarrow 0,$$
where $S(-j)$ is the graded $S$-module with grading $S(-j)_{k}=S_{-j+k}$. The number $\beta_{i,j}(M)$ is called the $(i,j)$-th {\it graded Betti number} of $M$. The {\it Castelnuovo--Mumford regularity} of $M$ is defined by $${\rm reg}\hspace{0.05cm}M=\max\{j-i\,\,:\beta_{i,j}(M)\neq0\}.$$Also, the {\it projective dimension} of $M$ is defined by $${\rm pd}\hspace{0.05cm}M=\max\{i\,\,:\beta_{i,j}(M)\neq0\mbox{ for some }j\}.$$

%%%%%%%%%%%%%%%%%%%%%%%%%%%%%%%%%%%%%%%%%%%%%%%%%%%%%%%%%%%%%%%%%%%%%%%%%%%%%%%%%%%%%%%%%

\section{Algebraic aspects of rooted products of graphs}
In this section, we study rooted products of graphs from the viewpoint of combinatorial commutative algebra. In particular, we focus on the edge ideals associated with the rooted product of graphs. We provide a necessary and sufficient condition for such graphs to satisfy Serre's condition $(S_{r})$. Consequently, we obtain that a necessary and sufficient condition for such graphs to be Cohen--Macaulay. This result generalizes a celebrated theorem by Villarreal, which states that the edge ideal of a whisker graph is always Cohen–Macaulay \cite[Proposition 2.2]{v1}. Let us recall the definition of the rooted product of graphs, which was introduced by Godsil and McKay \cite[Definition 1.1]{gm}. 

\begin{defi}
Let $H_{1}, \ldots, H_{n}$ be graphs with $V(H_{i})\cap V(H_{j})=\emptyset$ for all $i\neq j$ and fix a vertex $x_{i} \in V(H_{i})$ for each $i = 1, \ldots, n$. Let $G_0$ be a graph on the vertex set $X_{[n]}=\{x_{1}, \ldots, x_{n}\}$, and set $\mathcal{H} = \{H_{1}, \ldots, H_{n}\}$.
Then, the graph $G_{0}(\mathcal{H})$, whose vertex set is$$V(G_{0}(\mathcal{H}))=\bigcup_{1\leq i\leq n}V(H_{i})$$
and whose edge set is$$E(G_{0}(\mathcal{H}))=E(G_{0})\cup\left(\bigcup_{1\leq i\leq n}E(H_{i})\right)$$is called the rooted product of $G_{0}$ by $\mathcal{H}$. 
\end{defi}

Throughout, we always assume that the following conditions hold for $G_{0}(\mathcal{H})$:
\begin{itemize}
\item[$(\ast)$] $m\geq 2$ and $H_{i}$ is connected for all $i$ and $G_{0}(\mathcal{H})$ is not the empty graph. 
\end{itemize} 

First, we give a characterization of the well-coveredness of rooted products of graphs. To this end, we introduce the following notion. 

\begin{defi}
Let $G$ be a graph on the vertex set $V$ and let $x$ be a vertex of $G$. Then, we call $G$ is {\it 2-pure with respect to $x$}, if the following conditions are satisfied: 
\begin{enumerate}
\item $G$ is pure. 
\item $G-x$ is pure with $\dim S/I(G)=\dim S/(I(G)+x)$. 
\end{enumerate}
If $G$ is 2-pure for all $x$, then $G$ is called {\it 2-pure}. 
\end{defi}

\begin{rem}
By considering the independence complex, $G$ is 2-pure if and only if $\Delta(G)$ is 2-pure in the sense of simplicial complexes. Hence, we also refer to this property as 2-pure. In graph-theoretic terms, $G$ is known to belong to the class $W_{2}$ in \cite{s1}. 

\end{rem}

The property of being 2-pure with respect to a vertex is characterized as follows.

\begin{prop}\label{2-pure}
Let $G$ be a graph and $x$ a vertex of $G$ such that $x$ is not isolated vertex. 
Then the following conditions are equivalent: 
\begin{enumerate}
\item $G$ is 2-pure with respect to $x$. 
\item $G$ is well-covered and for every maximal independent set $A$ of $G$ that contains $x$, there exists a maximal independent set $B$ of $G$  such that $B\supset A\setminus\{x\}$. 
\item $G$ is well-covered and $|C\cap V(G)|={\rm ht}I(G)$ for a minimal vertex cover $C$ of $G+xy$, where $y$ is a new vertex. 
\end{enumerate}
\end{prop}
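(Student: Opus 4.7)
The plan is to translate each of (1), (2), (3) into combinatorial statements about maximal independent sets and minimal vertex covers of $G$, $G-x$, and $G+xy$. Using $\dim S/I(G) = \alpha(G)$ and $\dim S/(I(G)+x) = \alpha(G-x)$, where $\alpha$ denotes the independence number, condition (1) reads: $G$ and $G-x$ are both well-covered with $\alpha(G) = \alpha(G-x)$; equivalently, both are well-covered and some maximal independent set of $G$ of size $\alpha(G)$ avoids $x$.

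For $(1) \iff (2)$, I interpret the conclusion "$B \supset A \setminus \{x\}$" in (2) as implicitly requiring $B \ne A$, i.e., $x \notin B$ (the trivial reading $B = A$ would reduce (2) to mere well-coveredness of $G$). For $(1) \Rightarrow (2)$, given a maximal independent set $A \ni x$ of $G$, I extend $A \setminus \{x\}$ to a maximal independent set $B$ of $G-x$. By (1), $|B| = \alpha(G-x) = \alpha(G) = |A|$, so adding $x$ to $B$ would exceed $\alpha(G)$ and violate well-coveredness; hence $x$ has a neighbor in $B$ and $B$ is maximal in $G$ with $x \notin B$. For $(2) \Rightarrow (1)$, extending $\{x\}$ to a maximal independent set $A$ and applying (2) yields a maximal $B \not\ni x$ of $G$ with $B \supset A \setminus \{x\}$; then $\alpha(G-x) \geq |B| = \alpha(G)$, giving equality. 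To see $G-x$ is well-covered, suppose toward contradiction that $B'$ is a maximal independent set of $G-x$ with $|B'| < \alpha(G)$. Since $G$ is well-covered, $x$ must be non-adjacent to $B'$ (otherwise $B'$ would itself be maximal in $G$ of the wrong size), so $A' := B' \cup \{x\}$ is maximal in $G$; applying (2) to $A'$ yields a maximal $B \not\ni x$ of $G$ with $B \supsetneq B'$ and $B \subset V(G-x)$, contradicting the maximality of $B'$ in $G-x$.

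For $(1) \iff (3)$, I classify the minimal vertex covers $C$ of $G+xy$ by a direct case analysis on whether $x$ or $y$ lies in $C$: each such $C$ falls into exactly one of two types, either (I) $C = D \cup \{y\}$ with $D$ a minimal vertex cover of $G$ avoiding $x$, or (II) $C = D \cup \{x\}$ with $D$ a minimal vertex cover of $G-x$. Thus $|C \cap V(G)|$ equals $|D|$ in Type I and $|D| + 1$ in Type II. Assuming $G$ is well-covered, Type I covers automatically satisfy $|C \cap V(G)| = \mathrm{ht}\, I(G)$; the remaining requirement "$|D| + 1 = \mathrm{ht}\, I(G)$ for every minimal vertex cover $D$ of $G-x$" says precisely that $G-x$ is well-covered and its minimum vertex cover size is $\mathrm{ht}\, I(G) - 1$, equivalently $\alpha(G-x) = \alpha(G)$. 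This matches the combinatorial reformulation of (1).

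The main obstacle is the implication $(2) \Rightarrow (1)$: specifically, ruling out small maximal independent sets of $G-x$ requires using (2) on the enlarged set $A' = B' \cup \{x\}$ to manufacture a strictly larger independent set of $G-x$, contradicting the maximality of $B'$. Once this key swap is in place, the $(1) \Leftrightarrow (3)$ equivalence comes out cleanly from the two-type classification of minimal vertex covers of $G+xy$.
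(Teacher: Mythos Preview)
Your proof is correct. The $(1)\Leftrightarrow(2)$ argument matches the paper's essentially line for line, including the correct reading of (2) as implicitly requiring $B\neq A$ (equivalently $x\notin B$), and your contradiction step for the well-coveredness of $G-x$ is exactly the swap the paper performs.

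Where you differ is in handling (3). The paper proves $(2)\Leftrightarrow(3)$: it classifies the minimal vertex covers of $G+xy$ into three types, all expressed in terms of minimal vertex covers $D$ of $G$ (namely $D$ itself with $x\in D$, or $D\cup\{y\}$, or $D\cup\{x\}$ with $x\notin D$), and then shows that (3) fails precisely when some $D\cup\{x\}$ remains minimal; taking complements recovers the independent-set statement (2). Your route proves $(1)\Leftrightarrow(3)$ directly, via a two-type classification keyed on whether $y\in C$: Type~(I) gives $D\cup\{y\}$ with $D$ a minimal cover of $G$ avoiding $x$, and Type~(II) gives $D\cup\{x\}$ with $D$ a minimal cover of $G-x$. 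This is a genuinely different parametrization: by using minimal covers of $G-x$ in Type~(II), the condition $|C\cap V(G)|=\operatorname{ht}I(G)$ translates immediately into ``$G-x$ is well-covered with $\alpha(G-x)=\alpha(G)$'', which is (1) on the nose. Your decomposition is arguably cleaner, since it bypasses the complement-to-(2) step; the paper's version, on the other hand, keeps everything phrased over $G$ rather than $G-x$, which fits its emphasis on the equivalence with (2).
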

\begin{proof}
First, we prove the equivalence between (1) and (2). Suppose that $G$ is 2-pure with respect to $x$. Fix a maximal independent set $A$ of $G$ that contains $x$. Then, by the assumption that $\dim S/I(G)=\dim S/(I(G)+x)$, $A\setminus\{x\}$ is not a maximal independent set of $G-x$. Hence, there exists a maximal independent set $B$ of $G$ such that $B\supset A\setminus\{x\}$. Conversely, we suppose that the condition (2) holds. Fix a maximal independent set $A$ of $G$ that contains $x$. To prove the well-coveredness of $G-x$, we must prove $A\setminus\{x\}$ is not a maximal independent set in $G-x$. By the condition of (2), there exists a maximal independent set $B$ of G such that $B\supset A\setminus\{x\}$. Notice that $B$ does not contain $x$ by the well-coveredness of $G$. Therefore, $B$ is a maximal independent set of $G-x$, and hence $A\setminus\{x\}$ is not a maximal independent set in $G-x$, as required. Next, we prove the equivalence between (2) and (3). By considering the complement, it is enough to show that the condition (3) holds if and only if  for every minimal vertex cover $C$ of $G$ that does not contain $x$, there exists a minimal vertex cover $D$ of $G$ that contains $x$ such that $D \subseteq C \cup \{x\}$. Notice that for every minimal vertex cover $C$ of $G+xy$, $C$ satisfies one of the following conditions:
\begin{enumerate}[label=(\alph*)]
\item $C$ is a minimal vertex cover of $G$,
\item there exists a minimal vertex cover $D$ of $G$ such that $D\cup\{y\}=C$,
\item  there exists a minimal vertex cover $D$ of $G$ such that $D\cup\{x\}=C$. 
\end{enumerate}
This shows that the condition (3) holds if and only if $C$ is not a minimal vertex cover of $G$ for all $C\in\{D\cup\{x\}\,\,: x\notin D\in{\rm Min}(G)\}$. The latter one is equivalent to the statement that for every minimal vertex cover $C$ of $G$ that does not contain $x$, there exists a minimal vertex cover $D$ of $G$ that contains $x$ such that $D \subset C\cup\{x\}$, as required. 
\end{proof}

While \cite[Proposition 3.2]{mfy} assumes that $G_{0}$ has no isolated vertices, 
we use the notion of 2-purity with respect to a vertex to characterize all graphs $G_{0}$ on the vertex set $X_{[n]}$ such that the rooted product $G_{0}(\mathcal{H})$ is well-covered.

\begin{prop}\label{well-covered}
Assume that the condition $(\ast)$ holds. Then, the following conditions are equivalent: 
\begin{enumerate}
\item $H_{i}$ is 2-pure with respect to $x_{i}$ for all $i$. 
\item $G_{0}(\mathcal{H})$ is well-covered for every graph $G_{0}$ on the vertex set $X_{[m]}$. 
\end{enumerate}
Moreover, under the assumption that $|V(H_{i})| \geq 2$ for all $i$, the following condition is equivalent to the conditions above:
\begin{enumerate}[start=3]
\item $G(\mathcal{H})$ is well-covered for every graph $G$ in $\mathcal{G}$, 
\end{enumerate}
where $\mathcal{G}$ is a family of graphs on the vertex set $X_{[m]}$ which satisfies the following condition: 
\begin{center}
For any $i=1,\ldots, m$, there exists $G\in\mathcal{G}$ such that $x_{i}$ is not an isolated vertex in $G$. 
\end{center}
\end{prop}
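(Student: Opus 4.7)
The plan is to analyze the maximal independent sets of $G_{0}(\mathcal{H})$ via their restrictions $A_{i} := A \cap V(H_{i})$ to each factor. The key structural fact is that the only edges of $G_{0}(\mathcal{H})$ crossing between different factors are those coming from $G_{0}$-edges among the roots; so any vertex $v \in V(H_{i}) \setminus \{x_{i}\}$ interacts with $A$ only through $V(H_{i})$. Throughout, let $\alpha_{i}$ and $\beta_{i}$ denote the independence numbers of $H_{i}$ and $H_{i}-x_{i}$, respectively.

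For (1) $\Rightarrow$ (2): for any maximal independent set $A$ of $G_{0}(\mathcal{H})$, maximality at every $v \in V(H_{i}) \setminus (A_{i} \cup \{x_{i}\})$ forces $A_{i}$ to be a maximal independent set of $H_{i}$ (when $x_{i} \in A_{i}$) or of $H_{i}-x_{i}$ (when $x_{i} \notin A_{i}$). By 2-purity of $H_{i}$ with respect to $x_{i}$, both cases yield $|A_{i}| = \alpha_{i}$, whence $|A| = \sum_{i}\alpha_{i}$ is constant and $G_{0}(\mathcal{H})$ is well-covered.

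For (2) $\Rightarrow$ (1), I would first take $G_{0}$ to be the edgeless graph on $X_{[m]}$, so that $G_{0}(\mathcal{H}) = \bigsqcup H_{i}$ and each $H_{i}$ is forced to be well-covered of independence number $\alpha_{i}$. Fixing indices $i \neq j$ (available since $m \geq 2$) and taking $G_{0}$ to be the single edge $\{x_{i}x_{j}\}$, I would split the maximal independent sets of $G_{0}(\mathcal{H})$ into three families according to which of $x_{i}, x_{j}$ lies in $A$. In the family with $x_{i} \in A$ and $x_{j} \notin A$, the factor $A_{j}$ may be chosen as any maximal independent set of $H_{j}-x_{j}$ (because $x_{j}$ is already blocked via $G_{0}$ by $x_{i}$), so well-coveredness of $G_{0}(\mathcal{H})$ forces $H_{j}-x_{j}$ to be well-covered of common size $\beta_{j}$; symmetrically for $H_{i}-x_{i}$. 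Comparing the sizes of the families "$x_{i} \in A$" and "$x_{j} \in A$" gives $\alpha_{i}-\beta_{i} = \alpha_{j}-\beta_{j} =: c$. When $|V(H_{i})|\geq 2$, connectedness of $H_{i}$ provides a neighbor $v$ of $x_{i}$ that extends to a maximal independent set of $H_{i}-x_{i}$ meeting $N_{H_{i}}(x_{i})$; this realizes the third family, and comparing its size with the first forces $c=0$, which gives the required $\alpha_{i} = \beta_{i}$ and completes the 2-purity. (If some $|V(H_{i})| = 1$ then the isolated-root structure causes the size comparison already to fail between the first two families, so neither (1) nor (2) can hold.)

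For the equivalence with (3) under $|V(H_{i})| \geq 2$, the implication (1) $\Rightarrow$ (3) is immediate from (1) $\Rightarrow$ (2). For (3) $\Rightarrow$ (1), for each $i$ I would pick $G^{(i)} \in \mathcal{G}$ in which $x_{i}$ has some neighbor $x_{j}$, and repeat the three-case analysis inside $G^{(i)}(\mathcal{H})$; the extra edges of $G^{(i)}$ contribute the same additive constants to every family, so the size comparisons and conclusions are unchanged. The main obstacle I anticipate is the realization of the third family in the edge-case analysis: one must verify that the chosen maximal independent set of $H_{i}-x_{i}$ meeting $N_{H_{i}}(x_{i})$, combined with an analogous set from $H_{j}-x_{j}$ and with appropriate maximal independent sets of the other $H_{k}$'s, really does yield a maximal independent set of $G_{0}(\mathcal{H})$ of the claimed size, so that the crucial identity $\beta_{i} + \beta_{j} = \alpha_{i} + \beta_{j}$ can legitimately be invoked to conclude $c = 0$.
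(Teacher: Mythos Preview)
Your approach via maximal independent sets and their restrictions $A_i$ is sound and genuinely different from the paper's: the paper argues with minimal vertex covers and invokes the preceding proposition (the characterization of $2$-purity at $x$ via minimal vertex covers of $G+xy$), whereas you bypass that lemma entirely. Your $(1)\Rightarrow(2)$ is cleaner than the paper's cover-swapping argument.

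There is, however, a real gap in your $(2)\Rightarrow(1)$ at the parenthetical. With $|V(H_i)|=1$, the comparison between Families~1 and~2 does \emph{not} fail: it simply yields the constraint $c=\alpha_i-\beta_i=1-0=1$, which is perfectly consistent by itself. What you are missing is a one-line observation that does all the work: for any $j$ with $|V(H_j)|\ge 2$ (such $j$ exists by~$(\ast)$), connectedness gives a neighbour of $x_j$ in $H_j$, hence a maximal independent set of $H_j$ avoiding $x_j$; since $H_j$ is well-covered (from the edgeless $G_0$), this set has size $\alpha_j$ and is also a maximal independent set of $H_j-x_j$, forcing $\beta_j=\alpha_j$. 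Now the single-edge comparison $\alpha_i-\beta_i=\alpha_j-\beta_j$ reads $1=0$, the desired contradiction. (The same observation makes your Family~3 step redundant even in the main case: once both $H_i$ and $H_i-x_i$ are known to be well-covered and share a common maximal independent set, $\alpha_i=\beta_i$ follows immediately without any size comparison across families.)

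For $(3)\Rightarrow(1)$ your sketch is on the right track, but the assertion that ``the extra edges of $G^{(i)}$ contribute the same additive constants to every family'' is not literally true, since the pieces $A_k$ for $k\neq i,j$ also interact with the edges of $G^{(i)}$. The clean fix is to freeze everything outside $H_i$: for each $k\neq i$ choose a maximal independent set $B_k$ of $H_k$ with $x_k\notin B_k$ (available since $|V(H_k)|\ge 2$ and $H_k$ is connected), set $B=\bigcup_{k\neq i}B_k$, and note that $B\cup A_i$ is maximal in $G^{(i)}(\mathcal H)$ if and only if $A_i$ is maximal in $H_i$. This gives $H_i$ well-covered. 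Then replace $B_j$ by some maximal $C_j\ni x_j$ (keeping $B_k$ for $k\neq i,j$) and repeat to get $H_i-x_i$ well-covered; comparing the two and using that $H_j$ is itself well-covered (by the symmetric argument, since $x_j$ is not isolated in $G^{(i)}$ either) yields $|B_j|=|C_j|$ and hence $\alpha_i=\beta_i$. This is the precise version of your three-family analysis and resolves the obstacle you anticipated.
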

\begin{proof}
First, we suppose that $H_{i}$ is 2-pure with respect to $x_{i}$ for all $i$. Fix a graph $G_{0}$ on the vertex set $X_{[n]}$ and fix a minimal vertex cover $C$ of $G_{0}(\mathcal{H})$. We set $C|_{V(G_{0})}=\{x_{i}\,\,: x_{i}\in C\}$. If $C|_{V(G_{0})}=\emptyset$, then clearly$$|C|=\displaystyle\sum_{1\leq i\leq n}{\rm ht}I(H_{i}).$$Hence we may assume that $C|_{V(G_{0})}\neq\emptyset$. Without loss of generality, we may assume that $C|_{V(G_0)} = \{x_1, \ldots, x_s\}$ for some $1 \leq s \leq n$, after relabeling the vertices of $G_0$. Then, for all $i=s+1,\ldots, n$, by Proposition \ref{2-pure} and well-coveredness of $H_{i}$, there exists a minimal vertex cover $C_{i}^{\prime}$ of $H_{i}$ such that $x_{i}\in C_{i}\subset C|_{V(H_{i})}\cup\{x_{i}\}$ and $|C_{i}^{\prime}|=|C|_{V(H_{i})}|$. By replacing $C|_{V(H_{i})}$ by $C^{\prime}$, we get a minimal vertex cover $C^{\prime}$ of $G_{0}(\mathcal{H})$, namely, we set $$C^{\prime} = \left( \bigcup_{1 \leq i \leq s} C|_{V(H_{i})} \right) \cup \left( \bigcup_{s+1 \leq i \leq n} C_{i}^{\prime} \right).$$
Then, $|C|=|C^{\prime}|$. By applying this operation to every minimal vertex cover of $G_{0}(\mathcal{H})$ with $C|_{V(G_{0})}\neq\emptyset$, we can assume, while preserving their cardinalities, that each minimal vertex cover of $G_{0}(\mathcal{H})$ contains all vertices in $G_{0}$. Hence, we have $$|C|=n+\displaystyle\sum_{1\leq i\leq n}({\rm ht}I(H_{i})-1)=\displaystyle\sum_{1\leq i\leq n}{\rm ht}I(H_{i})$$for any minimal vertex cover $C$ of $G_{0}(\mathcal{H})$. Therefore, $G_{0}(\mathcal{H})$ is well-covered. We now prove the converse implication. To this end, we suppose that $G_{0}(\mathcal{H})$ is well-covered for every graph $G_{0}$ on the vertex set $X_{[m]}$. By considering a graph $G_{0}$ as the empty graph, one can see that $H_{i}$ is well-covered for all $i$. Hence it suffices to show that $H_{i}$ is 2-pure with respect to $x_{i}$ for all $i$. Suppose not, that is, we suppose that there exists $i$ such that $H_{i}$ is not 2-pure with respect to $x_{i}$ for all $i$. If there exist distinct $p, q$ such that $H_{p}$ and $H_{q}$ are single vertex graph, then, by considering $G_{0}$ as the graph such that $\{x_{p}, x_{r}\}$, and $\{x_{q}, x_{r}\}$ are only edges for some $r$ such that $H_{r}$ is not a single vertex graph, we see that $G_{0}(\mathcal{H})$ is not well-covered, by considering minimal vertex covers $C_{r}, C_{r}^{\prime}$ of $H_{r}$ such that $x_{r}\in C_{r}$ and $x_{r}\notin C_{r}^{\prime}$. By the assumption that $(\ast)$, there exists $j$ such that $j\neq i$ and $H_{j}$ is not a single vertex graph. We consider a graph $G_{0}$ such that $\{x_{i}, x_{j}\}$ is the only edge of $G_{0}$. Then, since $H_{i}$ is not 2-pure with respect to $x_{i}$, by Proposition \ref{2-pure} (c), there exists a minimal vertex cover $C_{i}$ of $H_{i}$ such that $C_{i}\cup\{x_{j}\}$ is also a minimal vertex cover of $H_{i}+x_{j}$. Also, fix a minimal vertex cover $C_{j}$ of $H_{j}$ such that $x_{j}\notin C_{j}$ and a minimal vertex cover $C_{j}^{\prime}$ of $H_{j}$ such that $x_{j}\in C_{j}^{\prime}$ since $|V(H_{j})|\geq2$. Then, $C_{i}\cup C_{j}^{\prime}$ is a minimal vertex cover of  $(G_{0}(\mathcal{H}))|_{V(H_{i})\cup V(H_{j})}$, where this graph is the induced subgraph of $G_{0}(\mathcal{H})$ on $V(H_{i})\cup V(H_{j})$. Now, since $H_{i}$ is not 2-pure with respect to $x_{i}$, by  Proposition \ref{2-pure}, $C_{i}\cup\{x_{i}\}$ is a minimal vertex cover of $H_{i}+x_{j}$. Hence, it follows that $(C_{i}\cup\{x_{i}\})\cup C_{j}$ is a minimal vertex cover of $G_{0}(\mathcal{H})|_{V(H_{i})\cup V(H_{j})}$. Therefore, since $|C_{j}|=|C_{j}^{\prime}|$, we obtain that $|C_{i}\cup C_{j}^{\prime}|<|(C_{i}\cup\{x_{i}\})\cup C_{j}|$, which contradicts the assumption that $G_{0}(\mathcal{H})$ is well-covered, as required. 
\end{proof}

In \cite[Proposition 2.3]{mfy}, vertex decomposability of rooted products of graphs is investigated. We refine this result in pure case, by showing that, when the base graph $G_{0}$ varies, the conditions given in \cite[Proposition 2.3]{mfy} are in fact equivalent. 

\begin{prop}\label{vertex decomp}
Assume that the condition $(\ast)$ holds. Then the following conditions are equivalent: 
\begin{enumerate}
\item $H_{i}$ is vertex decomposable such that $x_{i}$ is a shedding vertex and $H_{i}$ is 2-pure with respect to $x_{i}$ for all $i$. 
\item $G_{0}(\mathcal{H})$ is pure vertex decomposable for every graph $G_{0}$ on the vertex set $X_{[m]}$.
\end{enumerate}
Moreover, under the assumption that $|V(H_{i})| \geq 2$ for all $i$, the following condition is equivalent to the conditions above:
\begin{enumerate}[start=3]
\item $G(\mathcal{H})$ is vertex decomposable for every graph $G$ in $\mathcal{G}$, 
\end{enumerate}
where $\mathcal{G}$ is a family of graphs on the vertex set $X_{[m]}$ which satisfies the following condition: 
\begin{center}
For any $i=1,\ldots, m$, there exists $G\in\mathcal{G}$ such that $x_{i}$ is not an isolated vertex in $G$. 
\end{center}
\end{prop}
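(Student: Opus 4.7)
The plan is to prove $(1)\Leftrightarrow(2)$ directly and then obtain $(2)\Leftrightarrow(3)$ under the additional hypothesis $|V(H_i)|\geq 2$, following the structure of Proposition \ref{well-covered} but substituting pure vertex decomposability for well-coveredness.

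For $(1)\Rightarrow(2)$, the vertex-decomposable and shedding portion of (1) feeds into \cite[Proposition 2.3]{mfy}, which yields that $G_0(\mathcal{H})$ is vertex decomposable for every base graph $G_0$. The 2-purity portion of (1), combined with Proposition \ref{well-covered}, gives that $G_0(\mathcal{H})$ is well-covered. Together these two ingredients produce pure vertex decomposability.

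For $(2)\Rightarrow(1)$, I would specialize $G_0$ to the empty graph on $X_{[m]}$: the rooted product $G_0(\mathcal{H})$ becomes the disjoint union of the $H_i$, whose independence complex is the join $\Delta(H_1)*\cdots*\Delta(H_m)$. Since a join is vertex decomposable precisely when each factor is, each $H_i$ is vertex decomposable. The 2-purity of $H_i$ with respect to $x_i$ is supplied by Proposition \ref{well-covered} applied to the well-coveredness content of (2). The shedding of $x_i$ in $H_i$ is then automatic from 2-purity: any facet of $\mathrm{del}_{\Delta(H_i)}(\{x_i\})$ is a maximal independent set $A$ of $H_i-x_i$ with $|A|=\dim S/I(H_i)$, so $A\cup\{x_i\}$ cannot be independent in $H_i$ (else its size would strictly exceed the maximum independence number), and consequently $A$ is also a facet of $\Delta(H_i)$.

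For the additional assumption $|V(H_i)|\geq 2$: $(2)\Rightarrow(3)$ is immediate since pure vertex decomposability implies vertex decomposability and $\mathcal{G}$ is a subfamily of graphs on $X_{[m]}$. For $(3)\Rightarrow(1)$, for each $i$ I would pick $G_i\in\mathcal{G}$ with $x_i$ not isolated; vertex decomposability of $H_i$ then follows from the join/deletion argument applied to $G_i(\mathcal{H})$. To obtain the 2-purity of $H_i$ with respect to $x_i$, I would adapt the $(3)\Rightarrow(1)$ argument in Proposition \ref{well-covered}: assuming 2-purity fails, Proposition \ref{2-pure} supplies a minimal vertex cover $C_i$ of $H_i$ with $C_i\cup\{x_j\}$ a minimal vertex cover of $H_i+x_j$, and combined with $|V(H_j)|\geq 2$ and the connectedness of $H_j$, one constructs an induced subcomplex of $\Delta(G_i(\mathcal{H}))$ forced to violate vertex decomposability. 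The shedding of $x_i$ then follows from 2-purity exactly as in the $(2)\Rightarrow(1)$ step.

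The main obstacle is this last implication $(3)\Rightarrow(1)$: one must convert the combinatorial failure of 2-purity into an obstruction to vertex decomposability, not merely to well-coveredness. In the well-covered case the contradiction was a clean cardinality comparison of minimal vertex covers, but vertex decomposability is not governed by facet cardinalities alone, so one needs to pinpoint a specific non-vertex-decomposable induced subcomplex (for instance a forced $C_4$-type configuration) rather than argue by cardinality.
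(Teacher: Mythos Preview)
Your $(1)\Rightarrow(2)$ is fine and, by invoking \cite[Proposition~2.3]{mfy} together with Proposition~\ref{well-covered}, is actually cleaner than the paper's direct induction on $|V(G_0)|$.

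The real gap is in $(2)\Rightarrow(1)$. Your argument that ``shedding of $x_i$ is automatic from 2-purity'' only verifies the facet condition: every facet of $\mathrm{del}_{\Delta(H_i)}(\{x_i\})$ is a facet of $\Delta(H_i)$. But in this paper ``$x_i$ is a shedding vertex'' is used in the strong sense that the decomposition of $\Delta(H_i)$ can begin at $x_i$; in particular one needs $\Delta(H_i-x_i)$ to be vertex decomposable. This does \emph{not} follow from 2-purity, and the paper itself furnishes counterexamples: in Remark~\ref{U neq Shed} the graphs $G_1,G_2$ are Cohen--Macaulay and 2-pure with respect to the vertex $2$, yet $G_i-2$ is not vertex decomposable, so $2\in U(G_i)\setminus\mathrm{Shed}(G_i)$. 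The paper closes this gap by a different move: it takes $G_0$ to be the graph whose only edge is $\{x_i,x_j\}$, observes that $H_i-x_i$ is a connected component of $G_0(\mathcal{H})-N[x_j]$, and then uses that $\mathrm{link}_{\Delta(G_0(\mathcal{H}))}(x_j)$ is vertex decomposable together with \cite[Proposition~2.4]{pb} to conclude that $\Delta(H_i-x_i)$ is vertex decomposable. You need an argument of this type, not a purity argument.

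For $(3)\Rightarrow(1)$ you are heading in the wrong direction. You propose to derive a contradiction by locating a non-vertex-decomposable induced subcomplex, and you correctly identify this as difficult. The paper does not argue by contradiction at all: it works \emph{forward}, extracting the needed pieces as links. Given $G_1\in\mathcal G$ with $x_1$ not isolated, one chooses for each $j\geq 2$ a facet $A_j$ of $\Delta(H_j)$ with $x_j\notin A_j$ (possible since $|V(H_j)|\geq 2$), sets $A=\bigcup_{j\geq 2}A_j$, and computes $\mathrm{link}_{\Delta(G_1(\mathcal H))}A=\Delta(H_1)$; since links of vertex decomposable complexes are vertex decomposable, $H_1$ is vertex decomposable. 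Similarly, picking $i$ with $\{x_1,x_i\}\in E(G_1)$ makes $H_1-x_1$ a connected component of $G_1(\mathcal H)-N[x_i]$, and the same link argument gives vertex decomposability of $H_1-x_1$. Two-purity is then quoted from Proposition~\ref{well-covered}. This link-extraction method is the key idea you are missing in both the $(2)\Rightarrow(1)$ and $(3)\Rightarrow(1)$ directions.
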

\begin{proof}
First, we suppose that $H_{i}$ is vertex decomposable such that $x_{i}$ is a shedding vertex and $H_{i}$ is 2-pure with respect to $x_{i}$ for all $i$. Fix a graph $G_{0}$ on the vertex set $X_{[m]}$. We proceed by induction on $|V(G_{0})|=m\geq1$. In the case of $|V(G_{0})| = 1$, the graph $G_{0}(\mathcal{H})$ is simply $H_1$, which is clearly vertex decomposable. Now assume that $|V(G_{0})| > 1$. Then we have $${\rm del}_{\Delta(G_{0}(\mathcal{H}))}(x_{1})=\Delta(G_{0}(\mathcal{H})-x_{1})=\Delta((G_{0}-x_{1})(\mathcal{H}\setminus H_{1}))*\Delta(H_{1}-x_{1}).$$Since these simplicial complexes are vertex decomposable and the join of vertex decomposable simplicial complexes are also vertex decomposable, ${\rm del}_{\Delta(G_{0}(\mathcal{H}))}$ is vertex decomposable. Moreover, we prove that ${\rm link}_{\Delta(G_{0}(\mathcal{H}))}(x_{1})$ is vertex decomposable. Let $N_{G_{0}}(x_{1})=\{x_{i_{1}},\ldots, x_{i_{s}}\}$ and let $\Gamma=\Delta(H_{i_{1}}-x_{i_{1}})*\cdots*\Delta(H_{i_{s}}-x_{i_{s}})$. Then we have 
\begin{align*}
{\rm link}_{\Delta(G_{0}(\mathcal{H}))}(x_{1})&=\Delta(G_{0}-N[x_{1}]) \\
&= \Delta((G_{0}-N_{G_{0}}[x_{1}])(\mathcal{H}\setminus\{H_{i_{1}},\ldots, H_{i_{s}}\}))*\Delta(H_{1}-N_{H_{1}}[x_{1}])*\Gamma.
\end{align*}
By the induction hypothesis and the assumption, ${\rm link}_{\Delta(G_{0}(\mathcal{H}))}(x_{1})$ is vertex decomposable, as required. 
Conversely, we suppose that $G_{0}(\mathcal{H})$ is pure vertex decomposable for every graph $G_{0}$ on the vertex set $X_{[m]}$. By considering a graph $G_{0}$ as the empty graph, we see that $H_{i}$ is vertex decomposable for any $i$ by \cite[Proposition 2.4]{pb}. Also, by \cite[Proposition 2.3]{pb}, the link of any face of vertex decomposable simplicial complex, is also vertex decomposable. Hence $H_{i}-N[x_{i}]$ is a vertex decomposable for any $i$. To prove that $x_{i}$ is a shedding vertex, we must check that $H_{i}-x_{i}$ is vertex decomposable.  We consider a graph $G_{0}$. Fix $i$ and $j$. Consider a graph $G_{0}$ in which $x_{i}$ and $x_{i}$ are the only adjacent vertices; all other vertices are isolated. Then, since $\Delta(G_{0}(\mathcal{H})-N[x_{j}])$ is vertex decomposable, we see that $\Delta(H_{i}-x_{i})$ is vertex decomposable by \cite[Proposition 2.4]{pb}, which completes the proof.

It remains to prove the second assertion of this theorem. $(2)\Rightarrow(3)$ is clear. Hence, it suffices to prove that $(3)\Rightarrow (1)$. By Proposition \ref{well-covered}, we have $H_{i}$ is 2-pure with respect to $x_{i}$ for all $i$.  By symmetry, it suffices to prove the assertions for $i=1$. Take $G_{1}\in\mathcal{G}$ be a graph such that $x_{1}$ is not an isolated vertex in $G_{1}$. For every $j\geq2$, let $A_{j}$ be a facet of $\Delta(H_{j})$ with $x_{j}\notin A_{j}$. Then we have $A=\cup_{j\geq2}A_{j}$ is a face of $\Delta(G_{1}(\mathcal{H}))$. Since $\Delta(G_{1}(\mathcal{H}))$ is vertex decomposable, ${\rm link}_{\Delta(G_{1}(\mathcal{H}))}A=\Delta(H_{1})$ is also vertex decomposable. Hence $H_{1}$ is vertex decomposable. To prove that $x_{1}$ is a shedding vertex of $H_{1}$, we must check that $H_{1}-x_{1}$ is vertex decomposable since $H_{1}-N[x_{1}]$ is vertex decomposable.  Since $x_{1}$ is not an isolated vertex in $G_{1}$, we can take $i$ such that $\{x_{1}, x_{i}\}$ is an edge of $G_{1}$. Then, we see that $H_{1}-x_{1}$ is a connected component of $G_{1}(\mathcal{H})-N[x_{i}]$. Since ${\rm link}_{\Delta(G_{1}(\mathcal{H}))}x_{i}$ is vertex decomposable, by \cite[Proposition 2.4]{pb}, it follows that $H_{1}-x_{1}$ is vertex decomposable, as required. 
\end{proof}

We prove the main theorem in this section that characterizes when $S/I(G_{0}(\mathcal{H}))$ satisfies Serre's condition $(S_{r})$. To this end, we introduce the following notion: 

\begin{defi}
Let $G$ be a graph, $x$ be a vertex of $G$ and let $r\geq2$. Then, we call $G$ satisfies {\it 2-Serre's condition $(S_{r})$ with respect to $x$}, if the following conditions are satisfied: 
\begin{enumerate}
\item $G$ satisfies Serre's condition $(S_{r})$. 
\item $G-x$ satisfies Serre's condition $(S_{r})$ with $\dim S/I(G)=\dim S/(I(G)+x)$. 
\end{enumerate}
If $G$ satisfies 2-Serre's condition $(S_{r})$ for all vertices, then we call $G$ satisfies {\it 2-Serre's condition $(S_{r})$}.  
\end{defi}

As a generalization of \cite[Lemma 2.5]{hp}, we prove the following lemma: 

\begin{lemma}\label{lemma1}
Let $G$ be a graph, $x$ be a vertex of $G$ and $r\geq2$. If $G-x$ and $G-N[x]$ satisfy Serre's condition $(S_{r})$ with $\alpha(G-x)=\alpha(G-N[x])+1$, then $G$ satisfies Serre's condition $(S_{r})$. 
\end{lemma}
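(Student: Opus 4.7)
The plan is to exploit the standard short exact sequence obtained by colon by $x$, identify its outer terms as (polynomial extensions of) the Stanley--Reisner rings of the two subgraphs appearing in the hypothesis, and then transport Serre's condition from those outer terms to $S/I(G)$ via the long exact sequence in local cohomology. Concretely, I would begin from
\begin{equation*}
0\longrightarrow \bigl(S/(I(G):x)\bigr)(-1)\xrightarrow{\ \cdot\, x\ }S/I(G)\longrightarrow S/(I(G)+(x))\longrightarrow 0
\end{equation*}
and use the standard colon formula $(I(G):x)=I(G-N[x])+(y\,:\,y\in N_G(x))$ to identify
\begin{equation*}
S/(I(G):x)\cong \Bbbk[V(G-N[x])\cup\{x\}]/I(G-N[x]),\qquad S/(I(G)+(x))\cong \Bbbk[V(G)\setminus\{x\}]/I(G-x).
\end{equation*}
The first module is obtained from $\Bbbk[V(G-N[x])]/I(G-N[x])$ by adjoining the free variable $x$, and since $(S_r)$ is preserved under adjoining a polynomial variable, it inherits $(S_r)$ from $G-N[x]$; the second satisfies $(S_r)$ by the hypothesis on $G-x$.

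Next I would verify that all three modules in the sequence have the same Krull dimension. Directly, $\dim S/(I(G):x)=\alpha(G-N[x])+1$ and $\dim S/(I(G)+(x))=\alpha(G-x)$, which coincide by the hypothesis $\alpha(G-x)=\alpha(G-N[x])+1$; and since $\alpha(G)=\max\{\alpha(G-x),\alpha(G-N[x])+1\}$, this common value is also $\dim S/I(G)$. I then invoke the (graded) homological characterization of Serre's condition: a finitely generated graded $S$-module $M$ satisfies $(S_r)$ if and only if $\dim H^i_{\mathfrak{m}}(M)\le i-r$ for every $i<\dim M$ (with the convention $\dim 0=-\infty$). Under this characterization, the two outer terms in the short exact sequence satisfy the bound for every $i<\alpha(G)$, so the long exact sequence in local cohomology, together with the elementary observation that in a three-term exact piece $A'\to B'\to C'$ one has $\dim B'\le \max\{\dim A',\dim C'\}$, yields $\dim H^i_{\mathfrak{m}}(S/I(G))\le i-r$ throughout the range $i<\alpha(G)$; applying the characterization in the reverse direction gives $G\in(S_r)$.

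The main obstacle is the dimension step: the hypothesis $\alpha(G-x)=\alpha(G-N[x])+1$ is used precisely in order to make all three modules in the short exact sequence equidimensional, for otherwise the long exact sequence would compare local cohomology in different dimension ranges and the bound $\dim H^i_{\mathfrak{m}}(S/I(G))\le i-r$ would be uncontrolled for $\min\{\dim A,\dim C\}\le i<\dim B$. A secondary but more delicate technical point is the appeal to Schenzel's characterization of $(S_r)$ via dimensions of graded local cohomology modules; this can be cited, or one may avoid it by localizing the short exact sequence at each graded prime $\mathfrak{p}$, splitting into the cases $x\notin \mathfrak{p}$ and $N(x)\not\subseteq\mathfrak{p}$ (in which the middle term is isomorphic to one of the outer ones) and the remaining case $N[x]\subseteq\mathfrak{p}$, where the equidimensionality of the localizations reduces matters to the depth lemma.
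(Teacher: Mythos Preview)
Your argument is correct and takes a genuinely different route from the paper. The paper argues on the simplicial side: it writes $\Delta(G)=\Delta_1\cup\Delta_2$ with $\Delta_1=\Delta(G-x)$ and $\Delta_2=\{x\}*\Delta(G-N[x])$, first shows that $G$ is well-covered with $\alpha(G)=\alpha(G-x)$, and then checks Terai's link-homology criterion for $(S_r)$ face by face, invoking the Mayer--Vietoris sequence for the three links when $F\in\Delta_1\cap\Delta_2=\Delta(G-N[x])$. Your approach is the algebraic counterpart of the same decomposition: the colon short exact sequence encodes $\Delta_1\cup\Delta_2$, and the long exact sequence in local cohomology replaces Mayer--Vietoris. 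Your route absorbs the face-by-face case split into a single dimension bound on $H^i_{\mathfrak m}$; the paper's route avoids the Schenzel-type characterization, whose ``if and only if'' as you state it is not literally correct for arbitrary graded modules (the forward implication requires equidimensionality). This does not damage your argument---the outer terms, being Stanley--Reisner rings (up to a free variable) satisfying $(S_r)$ with $r\ge2$, are automatically pure, and the backward implication (which is what you apply to $S/I(G)$) holds without that hypothesis---and your own remark about the ``delicate technical point'' already signals awareness of the issue.
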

\begin{proof}
Let $\Delta_{1}=\Delta(G-x)$ and $\Delta_{2}=\Delta((G-N[x])\cup\{x\})$. Notice that $\Delta(G)=\Delta_{1}\cup\Delta_{2}$ and $\Delta_{1}\cap\Delta_{2}=\Delta(G-N[x])$. Also, from \cite[Lemma 2.6]{mt}, $G-x$ and $G-N[x]$ is well-covered. Hence, by $\alpha(G-x)=\alpha(G-x)+1$, we see that $G$ is well-covered and $\alpha(G)=\alpha(G-x)$. Indeed, let $A$ be a maximal independent set of $G$. If $x\in A$, then
$A\setminus\{x\}$ is a maximal independent set of $G-N[x]$ and hence $|A| \;=\; 1+\alpha(G-N[x]) \;=\; \alpha(G-x)$. 
Otherwise, that is, if $x\notin A$, then $A$ is a maximal independent set of $G-x$; hence
$|A| \;=\; \alpha(G-x)$. To prove the statement of this lemma, by \cite[page 4, following Theorem 1.7]{t1}, fix $r\geq2$ and it suffices to prove that $\widetilde{H}_{i}({\rm link}_{\Delta(G)}F;\Bbbk)=0$ for all $i\leq r-2$ and all $F\in\Delta(G)$ with $|F|\leq\dim S/I(G)-i-2$. Fix $i\leq r-2$ and $F\in\Delta(G)$ with  $|F|\leq\dim S/I(G)-i-2$. Since $\alpha(G)=\alpha(G-x)$, we have $|F|\leq\dim S/(I(G)+(x))-r-2$. If $F\in\Delta_{1}$ and $F\notin\Delta_{2}$, then since $|F|\leq\dim S/I(G)-i-2=\dim S/(I(G)+(x))-i-2$, we obtain that $\widetilde{H}_{i}({\rm link}_{\Delta}F;\Bbbk)=\widetilde{H}_{i}({\rm link}_{\Delta_{1}}F;\Bbbk)=0$. Also, if $F\in\Delta_{2}$ and $F\notin\Delta_{1}$, then, since $\alpha(G)=\alpha(G-N[x])+1=\alpha(G-N(x))$, we have $\widetilde{H}_{i}({\rm link}_{\Delta(G)}F;\Bbbk)=\widetilde{H}_{i}({\rm link}_{\Delta_{2}}F;\Bbbk)=0$. Suppose that $F\in\Delta_{1}\cap\Delta_{2}$. Then, from the Mayer--Vietoris exact sequence, we obtain that 
$$\widetilde{H}_{i}({\rm link}_{\Delta_{1}}F;\Bbbk)\oplus\widetilde{H}_{i}({\rm link}_{\Delta_{2}}F;\Bbbk)\rightarrow\widetilde{H}_{i}({\rm link}_{\Delta(G)}F;\Bbbk)\rightarrow\widetilde{H}_{i-1}({\rm link}_{\Delta_{1}\cap\Delta_{2}}F;\Bbbk).$$Now, since $x\notin F$ and $\Delta_{2}$ is a cone, ${\rm link}_{\Delta_{2}}F$ is also a cone, and acyclic. Also, since $\Delta_{p}$ satisfies Serre's condition $(S_{r})$, we see that $\widetilde{H}_{i}({\rm link}_{\Delta_{1}}F;\Bbbk)=0$. Moreover, since $|F|\leq\dim S/I(G)-i-2=\dim \Bbbk[V(G-N[x])]/I(G-N[x])-(i-1)-2$ and $\Delta(G-N[x])$ satisfies Serre's condition $(S_{r})$, we have $$\widetilde{H}_{i-1}({\rm link}_{\Delta_{1}\cap\Delta_{2}}F;\Bbbk)=\widetilde{H}_{i-1}({\rm link}_{\Delta(G-N[x])}F;\Bbbk)=0.$$From the exact sequence above, we obtain that $\widetilde{H}_{i}({\rm link}_{\Delta(G)}F;\Bbbk)=0$, as required, 
\end{proof}

From Lemma \ref{lemma1}, we obtain the following statement:  

\begin{prop}\label{2-S_{2}}
For a graph $G$, a vertex $x$ of $G$ and $r\geq2$, $G$ satisfies 2-Serre's condition $(S_{r})$ with respect to $x$ if and only if $G-x$ and $G-N[x]$ satisfy Serre's condition $(S_{r})$ with $\alpha(G-x)=\alpha(G-N[x])+1$. 
\end{prop}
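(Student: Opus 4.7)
The plan is to prove each direction separately, using Lemma~\ref{lemma1} for the converse and standard properties of $(S_{r})$ under taking links for the forward implication. Throughout I identify $\dim S/I(G)=\alpha(G)$ and $\dim S/(I(G)+(x))=\alpha(G-x)$, so the condition $\dim S/I(G)=\dim S/(I(G)+x)$ in the definition of 2-Serre's condition with respect to $x$ reads $\alpha(G)=\alpha(G-x)$. A key elementary identity I will use is
$$\alpha(G) \;=\; \max\{\alpha(G-x),\ \alpha(G-N[x])+1\},$$
obtained by splitting on whether a maximum independent set of $G$ contains $x$.

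For $(\Leftarrow)$, assuming $G-x$ and $G-N[x]$ satisfy $(S_{r})$ with $\alpha(G-x)=\alpha(G-N[x])+1$, Lemma~\ref{lemma1} immediately yields that $G$ satisfies $(S_{r})$, and the displayed identity forces $\alpha(G)=\alpha(G-x)$. Together with the hypothesis that $G-x$ satisfies $(S_{r})$, this verifies both clauses of the definition of 2-Serre's condition with respect to $x$.

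For $(\Rightarrow)$, assume $G$ satisfies 2-Serre's $(S_{r})$ with respect to $x$, so in particular $G-x$ satisfies $(S_{r})$ and $\alpha(G)=\alpha(G-x)$. To verify that $G-N[x]$ satisfies $(S_{r})$, observe that $\Delta(G-N[x])=\mathrm{link}_{\Delta(G)}(\{x\})$; since a link of a link is itself a link, the topological criterion for $(S_{r})$ used in the proof of Lemma~\ref{lemma1} (vanishing of reduced simplicial cohomology of links in a prescribed range) transfers from $\Delta(G)$ to $\Delta(G-N[x])$. For the identity $\alpha(G-x)=\alpha(G-N[x])+1$: since $G$ satisfies $(S_{r})$ with $r\geq 2$, $G$ is well-covered by \cite[Lemma 2.6]{mt}. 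Extending $\{x\}$ to a maximal independent set $A$ of $G$ gives $|A|=\alpha(G)=\alpha(G-x)$, so $A\setminus\{x\}$ is independent in $G-N[x]$ of size $\alpha(G-x)-1$, yielding $\alpha(G-N[x])\geq\alpha(G-x)-1$; the reverse inequality $\alpha(G-N[x])+1\leq\alpha(G)=\alpha(G-x)$ is immediate by adjoining $x$ to any maximum independent set of $G-N[x]$.

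The only nontrivial input beyond Lemma~\ref{lemma1} is the fact that Serre's condition $(S_{r})$ is inherited by links of faces, but this is already implicit in the cohomological characterization invoked in the proof of Lemma~\ref{lemma1}, so I do not expect any substantive obstacle.
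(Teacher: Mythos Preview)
Your proof is correct and follows essentially the same approach as the paper's: both directions are handled identically, with Lemma~\ref{lemma1} supplying $(\Leftarrow)$ and the link identification $\Delta(G-N[x])=\mathrm{link}_{\Delta(G)}(\{x\})$ together with the equality $\alpha(G-x)=\alpha(G-N[x])+1$ (via well-coveredness) supplying $(\Rightarrow)$. The only cosmetic difference is that the paper cites \cite[Lemma~2.2]{htyn} directly for the inheritance of $(S_{r})$ under links, whereas you derive it from the cohomological criterion used in Lemma~\ref{lemma1}; both are valid.
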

\begin{proof}
If $G-x$ and $G-N[x]$ satisfy Serre's condition $(S_{r})$ with $\alpha(G-x)=\alpha(G-N[x])+1$, then, by Lemma \ref{lemma1}, we see that $G$ satisfies 2-Serre's condition $(S_{r})$ with respect to $x$. Suppose that $G$ satisfies 2-Serre's condition $(S_{r})$ with respect to $x$. Since $\Delta(G-N[x])={\rm link}_{\Delta(G)}(x)$ and the fact that the link of any face of simplicial complex satisfies Serre's condition $(S_{r})$ also satisfies Serre's condition $(S_{r})$ by \cite[Lemma 2.2]{htyn}, it suffices to check that $\alpha(G-x)=\alpha(G-N[x])+1$. Since $\alpha(G)=\alpha(G-x)$, we see that $\alpha(G-x)\geq\alpha(G-N[x])+1$. Fix a facet $A$ of $\Delta(G)$ with $x\in A$. The, since $A\setminus\{x\}\in\Delta(G)$, we obtain that $\alpha(G)-1=|A\setminus\{x\}|\leq\alpha(G-N[x])$, as required. 
\end{proof}

Also, we have the following lemma by \cite[Theorem 6 (b)]{ty}:  
\begin{lemma}\label{lemma2}
For simplicial complexes $\Delta_{1}$ and $\Delta_{2}$ whose vertex sets are disjoint and $r\geq2$, $\Delta_{1}*\Delta_{2}$ satisfies Serre's condition $(S_{r})$ if and only if $\Delta_{1}$ and $\Delta_{2}$ satisfy Serre's condition $(S_{r})$. 
\end{lemma}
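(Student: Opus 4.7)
The plan is to reduce the equivalence to the link-based characterization of Serre's condition $(S_r)$ already invoked in the proof of Lemma~\ref{lemma1}: a simplicial complex $\Delta$ satisfies $(S_r)$ if and only if $\widetilde{H}_i(\mathrm{link}_{\Delta} F;\Bbbk) = 0$ for every face $F \in \Delta$ and every $i \leq r-2$ with $|F| + i \leq \dim \Delta - 1$. The three structural ingredients I will use are: every face of $\Delta_1 * \Delta_2$ is uniquely a disjoint union $F_1 \cup F_2$ with $F_j \in \Delta_j$; the link identity
\[
\mathrm{link}_{\Delta_1 * \Delta_2}(F_1 \cup F_2) \;=\; \mathrm{link}_{\Delta_1}(F_1) \,*\, \mathrm{link}_{\Delta_2}(F_2),
\]
together with the dimension formula $\dim(\Delta_1 * \Delta_2) = \dim \Delta_1 + \dim \Delta_2 + 1$; and the Künneth-type decomposition for the reduced homology of a join over a field,
\[
\widetilde{H}_n(X * Y;\Bbbk) \;\cong\; \bigoplus_{p+q = n-1} \widetilde{H}_p(X;\Bbbk) \otimes_{\Bbbk} \widetilde{H}_q(Y;\Bbbk).
\]

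For the direction $(\Leftarrow)$, I will fix a pair $(F,i)$ with $F = F_1 \cup F_2$ and $i$ admissible for $\Delta_1 * \Delta_2$. If $\widetilde{H}_i(\mathrm{link}_{\Delta_1 * \Delta_2} F;\Bbbk)$ were nonzero, then by Künneth some summand $\widetilde{H}_p(\mathrm{link}_{\Delta_1} F_1;\Bbbk) \otimes \widetilde{H}_q(\mathrm{link}_{\Delta_2} F_2;\Bbbk)$ with $p + q = i - 1$ must be nonzero, so both factors are nonzero. The contrapositive of the link characterization applied to $\Delta_1$ and $\Delta_2$ yields, for each $j$, the dichotomy $p \geq r-1$ or $|F_j| \geq \dim \Delta_j - p$ (respectively for $q$). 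Splitting into the four resulting cases and summing the two bounds, each case will contradict one of the admissibility inequalities $i \leq r-2$ and $|F_1| + |F_2| + i \leq \dim \Delta_1 + \dim \Delta_2$.

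For the direction $(\Rightarrow)$, I will use the trick of padding with a facet. Given $F_1 \in \Delta_1$ and an index $i$ admissible for $\Delta_1$, choose any facet $A$ of $\Delta_2$. Then $\mathrm{link}_{\Delta_2}(A) = \{\emptyset\}$, and since joining with $\{\emptyset\}$ acts as the identity on simplicial complexes,
\[
\mathrm{link}_{\Delta_1 * \Delta_2}(F_1 \cup A) \;=\; \mathrm{link}_{\Delta_1}(F_1),
\]
with matching dimensions. A direct check of the inequalities shows $(F_1 \cup A, i)$ is admissible for $\Delta_1 * \Delta_2$, so applying $(S_r)$ of the join at this face yields $\widetilde{H}_i(\mathrm{link}_{\Delta_1} F_1;\Bbbk) = 0$. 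Symmetry handles $\Delta_2$.

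The main obstacle I anticipate is the bookkeeping around the degenerate situation $\mathrm{link}_{\Delta_j}(F_j) = \{\emptyset\}$, i.e.\ when $F_j$ is a facet: the reduced-homology conventions must be verified carefully so that the associated Künneth summands vanish and may be discarded in the case analysis of $(\Leftarrow)$. Once that boundary issue is pinned down, what remains is routine arithmetic using the dimension formula for the join.
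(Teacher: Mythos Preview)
Your plan is correct and yields a valid proof. However, the paper does not actually prove this lemma: it simply cites \cite[Theorem~6(b)]{ty} (Tousi--Yassemi), a general commutative-algebra result asserting that a tensor product $A \otimes_\Bbbk B$ of $\Bbbk$-algebras satisfies Serre's condition $(S_r)$ if and only if both factors do. The lemma then follows from the standard isomorphism $\Bbbk[\Delta_1 * \Delta_2] \cong \Bbbk[\Delta_1] \otimes_\Bbbk \Bbbk[\Delta_2]$.

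Your route is genuinely different and more combinatorial: you remain entirely within the simplicial framework, combining the link-homology characterization of $(S_r)$ already invoked in Lemma~\ref{lemma1} with the K\"unneth formula for the reduced homology of a join. This has the advantage of being self-contained relative to the tools the paper has introduced; the paper's approach is shorter on the page but imports an external ring-theoretic result. One small correction to your description of the anticipated obstacle: when $F_j$ is a facet of $\Delta_j$, the term $\widetilde{H}_{-1}(\{\emptyset\};\Bbbk) \cong \Bbbk$ does \emph{not} vanish. What rescues the argument is that $(S_r)$ with $r \geq 2$ forces $\Delta_j$ to be pure, so any facet $F_j$ has $|F_j| - 1 = \dim \Delta_j$; hence the second branch of your dichotomy, $|F_j| + p \geq \dim \Delta_j$, already holds at $p = -1$, and the four-case analysis goes through unchanged. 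For the $(\Rightarrow)$ direction your padding trick is fine, and you could equally just observe that $\Delta_1 = \mathrm{link}_{\Delta_1 * \Delta_2}(A)$ for any maximal-dimensional facet $A$ of $\Delta_2$ and invoke \cite[Lemma~2.2]{htyn}, which the paper already uses.
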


We now prove the main theorem in this section that is a characterization of when the edge ideals of rooted products of graphs satisfy Serre's condition $(S_{r})$ for $r\geq2$. 

\begin{thm}\label{Serre}
Assume that the condition $(\ast)$ holds and let $r\geq2$. Then, the following conditions are equivalent: 
\begin{enumerate}
\item $H_{i}$ satisfies 2-Serre's condition $(S_{r})$ with respect to $x_{i}$ for all $i$. 
\item $G_{0}(\mathcal{H})$ satisfies Serre's condition $(S_{r})$ for every graph $G_{0}$ on the vertex set $X_{[m]}$. 
\end{enumerate}
Moreover, under the assumption that $|V(H_{i})| \geq 2$ for all $i$, the following condition is equivalent to the conditions above:
\begin{enumerate}[start=3]
\item $G(\mathcal{H})$ satisfies Serre's condition $(S_{r})$ for every graph $G$ in $\mathcal{G}$, 
\end{enumerate}
where $\mathcal{G}$ is a family of graphs on the vertex set $X_{[m]}$ which satisfies the following condition: 

\begin{center}
For any $i=1,\ldots, m$, there exists $G\in\mathcal{G}$ such that $x_{i}$ is not an isolated vertex in $G$. 
\end{center}
\end{thm}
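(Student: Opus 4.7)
The plan is to reduce every implication to combinatorics of disjoint unions of graphs through their independence complexes, and then to invoke three preparatory results already in this section: Proposition \ref{2-S_{2}}, which rephrases 2-Serre's condition at $x_i$ as the pair of statements $H_i-x_i,\,H_i-N[x_i]\in(S_r)$ plus $\alpha(H_i-x_i)=\alpha(H_i-N[x_i])+1$; Lemma \ref{lemma1}, which glues $(S_r)$ for a graph from the deletion/link pair at a vertex; and Lemma \ref{lemma2}, which says a join of simplicial complexes satisfies $(S_r)$ if and only if each factor does. The structural observation underlying the proof is that for any $x_k\in V(G_0)$, writing $N_{G_0}(x_k)=\{x_{i_1},\ldots,x_{i_s}\}$, one has
\[
G_0(\mathcal{H})-x_k \;=\; (H_k-x_k)\,\sqcup\,(G_0-x_k)(\mathcal{H}\setminus\{H_k\}),
\]
\[
G_0(\mathcal{H})-N[x_k] \;=\; (H_k-N_{H_k}[x_k])\,\sqcup\,\bigsqcup_{j=1}^{s}(H_{i_j}-x_{i_j})\,\sqcup\,(G_0-N_{G_0}[x_k])(\mathcal{H}'),
\]
where $\mathcal{H}'$ collects the $H_\ell$'s indexed by vertices of $G_0$ outside $N_{G_0}[x_k]$; the independence complex of a disjoint union is the join of the independence complexes of its pieces.

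For (1)$\Rightarrow$(2), I would fix $G_0$ and induct on $m=|V(G_0)|$. The case $m=1$ is $G_0(\mathcal{H})=H_1$, which lies in $(S_r)$ by (1). For $m\geq 2$, choose $x_k\in V(G_0)$ and apply Lemma \ref{lemma1} to $G_0(\mathcal{H})$ at $x_k$: by the decomposition above, the induction hypothesis, and Proposition \ref{2-S_{2}} applied to $H_k$ and to each $H_{i_j}$, both $G_0(\mathcal{H})-x_k$ and $G_0(\mathcal{H})-N[x_k]$ are disjoint unions of graphs each satisfying $(S_r)$, so Lemma \ref{lemma2} yields $(S_r)$ for their independence complexes. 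Additivity of $\alpha$ over disjoint unions combined with $\alpha(H_k-x_k)=\alpha(H_k-N_{H_k}[x_k])+1$ (from Proposition \ref{2-S_{2}}) delivers the required $\alpha(G_0(\mathcal{H})-x_k)=\alpha(G_0(\mathcal{H})-N[x_k])+1$, and Lemma \ref{lemma1} concludes.

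For (2)$\Rightarrow$(1), fix $i$. Taking $G_0$ to be the empty graph on $X_{[m]}$ yields $\Delta(G_0(\mathcal{H}))=\Delta(H_1)*\cdots*\Delta(H_m)$, so Lemma \ref{lemma2} forces each $H_i\in(S_r)$; since $\Delta(H_i-N[x_i])={\rm link}_{\Delta(H_i)}(x_i)$ and the link of any face of an $(S_r)$ complex is again $(S_r)$ by \cite[Lemma 2.2]{htyn}, $H_i-N[x_i]\in(S_r)$. For $H_i-x_i$, pick $j\neq i$ (possible since $m\geq 2$) and let $G_0$ be the graph whose unique edge is $\{x_i,x_j\}$: the link of $x_j$ in $\Delta(G_0(\mathcal{H}))$ equals $\Delta(G_0(\mathcal{H})-N[x_j])$, which by the decomposition above has $H_i-x_i$ as a summand, so Lemma \ref{lemma2} again forces $\Delta(H_i-x_i)\in(S_r)$. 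Finally, since $(S_r)$ implies $(S_1)$ for $r\geq 2$ and $(S_1)$ for a Stanley--Reisner ring is equivalent to purity, hypothesis (2) implies that $G_0(\mathcal{H})$ is well-covered for every $G_0$; Proposition \ref{well-covered} then gives 2-purity of $H_i$ with respect to $x_i$, supplying the last missing dimension equality, and Proposition \ref{2-S_{2}} completes (1). The implication (2)$\Rightarrow$(3) is immediate, and (3)$\Rightarrow$(1) repeats the two link constructions inside a graph $G\in\mathcal{G}$ at which $x_i$ is not isolated (the $\mathcal{G}$-hypothesis supplies the neighbor $x_j$ used above), combined with the $\mathcal{G}$-version of Proposition \ref{well-covered} to secure 2-purity.

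The main obstacle I anticipate is the degenerate bookkeeping in the decomposition of $G_0(\mathcal{H})-N[x_k]$: when $s=0$ the middle piece is empty, when $G_0-N_{G_0}[x_k]$ has no vertex the rooted-product summand disappears, and when isolated vertices of $G_0$ coincide with certain roots $x_\ell$ their factors $H_\ell$ persist unchanged. Each of these configurations must be checked to confirm that the resulting join is still a join of $(S_r)$ complexes and that additivity of $\alpha$ still produces the $+1$. Once these cases are handled uniformly, the global induction and the link arguments proceed as indicated.
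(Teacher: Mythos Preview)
Your overall strategy matches the paper's: induct via Lemma \ref{lemma1} for (1)$\Rightarrow$(2), and for the converses recover the needed properties of each $H_i$ from links in suitably chosen rooted products, finishing with Proposition \ref{well-covered} for 2-purity. One concrete error: the claim that ``$(S_1)$ for a Stanley--Reisner ring is equivalent to purity'' is false, since every Stanley--Reisner ring is reduced and hence automatically $(S_1)$, while non-pure complexes are plentiful. The correct fact is that $(S_2)$ forces purity (cf.\ \cite[Lemma~2.6]{mt}, as used in the proof of Lemma \ref{lemma1}); since $r\geq 2$ your conclusion survives with this substitution.

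Two further points deserve care. In the $\alpha$-verification for (1)$\Rightarrow$(2), additivity over disjoint unions alone does not suffice: you are comparing $\alpha\big((G_0-x_k)(\mathcal{H}\setminus\{H_k\})\big)$ with $\sum_j\alpha(H_{i_j}-x_{i_j})+\alpha\big((G_0-N_{G_0}[x_k])(\mathcal{H}')\big)$, and the rooted-product terms are not themselves disjoint unions of the $H_i$. The missing input, made explicit in the paper, is that under 2-purity $\alpha$ of any rooted product equals $\sum_i\alpha(H_i)$ (from the proof of Proposition \ref{well-covered}) together with $\alpha(H_{i_j}-x_{i_j})=\alpha(H_{i_j})$. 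For (3)$\Rightarrow$(1), note that your link-of-$x_i$ construction in (2)$\Rightarrow$(1) was taken inside $\Delta(H_i)$ and so presupposed $H_i\in(S_r)$ via the empty $G_0$; since the empty graph need not lie in $\mathcal{G}$, you should instead take the link of $x_i$ inside $\Delta(G(\mathcal{H}))$ (isolating $H_i-N_{H_i}[x_i]$ as a join factor and then concluding via Proposition \ref{2-S_{2}}), or follow the paper and take the link of the face $A=\bigcup_{j\neq i}A_j$ with each $A_j$ a facet of $\Delta(H_j)$ missing $x_j$, which yields $\Delta(H_i)$ directly.
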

\begin{proof}
First, we suppose that $H_{i}$ satisfies 2-Serre's condition $(S_{r})$ with respect to $x_{i}$ for all $i$. We prove by induction on the number of vertices of $G_{0}$. If $m=1$, then the assertion clearly holds. Hence we suppose that $m>1$. We set $N_{G_{0}}(x_{1})=\{x_{i_{1}},\ldots, x_{i_{s}}\}$ and set $V(G_{0})\setminus N_{G_{0}}[x_{1}]=\{x_{i_{s+1}},\ldots, x_{i_{t}}\}$. It is enough to show that $G_{0}(\mathcal{H})-x_{1}$ and $G_{0}(\mathcal{H})-N[x_{1}]$ satisfy Serre's condition $(S_{r})$ with $\alpha(G_{0}(\mathcal{H})-x_{1})=\alpha(G_{0}(\mathcal{H})-N[x_{1}])+1,$ by Lemma \ref{lemma1}. Now we have $$\Delta(G_{0}(\mathcal{H})-x_{1})=\Delta(H_{1}-x_{1})*\Delta((G_{0}-x_{1})(\mathcal{H}\setminus\{H_{1}\})).$$Hence, since $|V(G_{0}-x_{1})|<n$, by the induction hypothesis, $\Delta((G_{0}-x_{1})(\mathcal{H}\setminus\{H_{1}\}))$ satisfies Serre's condition $(S_{r})$ and thus, we see that $\Delta(G_{0}(\mathcal{H})-x_{1})$ satisfies Serre's condition $(S_{r})$ by the assumption that $H_{1}-x_{1}$ satisfies Serre's condition $(S_{r})$. We set $\Gamma=\Delta(H_{i_{1}}-x_{i_{1}})*\cdots*\Delta(H_{i_{s}}-x_{i_{s}})$ and $\Gamma^{\prime}=\Delta(H_{i_{s+1}})*\cdots*\Delta(H_{i_{t}})$. Then, we have $$\Delta(G_{0}(\mathcal{H})-N[x_{1}])=\Delta(H_{1}-N_{H_{1}}[x_{1}])*\Gamma*\Gamma^{\prime}.$$Notice that $\Delta(H_{1} - N_{H_{1}}[x_{1}]) = {\rm link}_{\Delta(H_{1})}(x_{1})$. Since $\Delta(H_{1})$ satisfies Serre's condition $(S_{r})$, and hence the link of any face satisfies Serre's condition $(S_{r})$ by \cite[Lemma 2.2]{htyn}, it follows that $\Delta(H_{1} - N_{H_{1}}[x_{1}])$ satisfies Serre's condition $(S_{r})$. By the assumption that $H_{k}$ and $H_{i_{j}}-x_{i_{j}}$ satisfy Serre's condition $(S_{r})$ for all $j$ and $k$, $\Delta(G_{0}(\mathcal{H})-N[x_{1}])$ satisfies Serre's condition $(S_{r})$. Finally, we must check that $\alpha(G_{0}(\mathcal{H})-x_{1})=\alpha(G_{0}(\mathcal{H})-N[x_{1}])+1$. Since, $H_{1}$ is 2-pure with respect to $x_{1}$, by Proposition \ref{2-pure}, we see that $\alpha(H_{1})=\alpha(H_{1}-x_{1})$. Moreover, since $H_{i}$ is 2-pure with respect to $x_{i}$ for all $i$, by Proposition \ref{2-pure}, we have $\alpha((G_{0}-x_{1})(\mathcal{H}\setminus\{H_{1}\}))=\sum_{2\leq i\leq m}\alpha(H_{i}).$ Therefore, we obtain that $$\alpha(G_{0}(\mathcal{H})-x_{1}))=\alpha(H_{1}-x_{1})+\alpha((G_{0}-x_{1})(\mathcal{H}\setminus\{H_{1}\}))=\displaystyle\sum_{1\leq i\leq m}\alpha(H_{i}).$$On the other hand, since $H_{i}$ is 2-pure for all $i$, we have $\alpha(H_{i_{j}}-x_{i_{j}})=\alpha(H_{i_{j}})$ for all $1\leq j\leq s$ and we have$$\alpha((G_{0}-N_{G_{0}}[x_{1}])*(\mathcal{H}\setminus\{H_{1}, H_{i_{1}},\ldots, H_{i_{s}}\}))=\displaystyle\sum_{s+1\leq k\leq t}\alpha(H_{i_{k}}).$$Moreover, since there exists a maximal independent set of $H_{1}$ that contains $x_{1}$ and $\Delta(H_{1})$ is pure, we obtain that $\dim\Bbbk[{\rm link}_{\Delta(H_{1})}(x_{1})]=\dim\Bbbk[\Delta(H_{1})]-1$, that is, $\alpha(H_{1}-N_{H_{1}}[x_{1}])=\alpha(H_{1})-1$. Therefore, we obtain that 
$$\alpha(G_{0}(\mathcal{H})-N[x_{1}])=\alpha(H_{1})-1+\displaystyle\sum_{2\leq i\leq m}\alpha(H_{i}),$$which implies the desired equality. Conversely, we suppose that $G_{0}(\mathcal{H})$ satisfies Serre's condition $(S_{r})$ for every graph $G_{0}$ on the vertex set $X_{[m]}$. We prove that $H_{i}$ satisfies 2-Serre's condition $(S_{r})$ with respect to $x_{i}$ for all $i$. By considering a graph $G_{0}$ as the empty graph, one can see that $H_{i}$ satisfies Serre's condition $(S_{r})$ for all $i$ by Lemma \ref{lemma2}. Fix $i$ and $j$. Consider a graph $G_{0}$ in which $x_{i}$ and $x_{i}$ are the only adjacent vertices; all other vertices are isolated. Then, since $\Delta(G_{0}(\mathcal{H})-N[x_{j}])$ satisfies Serre's condition $(S_{r})$, we see that $\Delta(H_{i}-x_{i})$ satisfies Serre's condition $(S_{r})$ by Lemma \ref{lemma2}. Moreover, since $G_{0}(\mathcal{H})$ is well-covered, by Proposition \ref{well-covered}, $H_{i}$ is 2-pure with respect to $x_{i}$, and thus we have $\dim \Bbbk[V(H_{i})]/I(H_{i})=\dim \Bbbk[V(H_{i})]/(I(H_{i})+x_{i})$, as required. 

It remains to prove the second assertion of this theorem. Suppose that $|V(H_{i})|\geq 2$ for all $i$. It suffices to check that $G(\mathcal{H})$ satisfies Serre's condition $(S_{r})$ for every graph $G$ in $\mathcal{G}$ implies that $H_{i}$ satisfies 2-Serre's condition $(S_{r})$ with respect to $x_{i}$ for all $i$. We must check that $H_{i}$ satisfies Serre's condition $(S_{r})$ and $H_{i}$ satisfies 2-Serre's condition $(S_{r})$ with respect to $x_{i}$ for all $i$. By symmetry, it suffices to prove the assertions for $i=1$. Take $G_{1}\in\mathcal{G}$ be a graph such that $x_{1}$ is not an isolated vertex in $G_{1}$. For every $j\geq2$, let $A_{j}$ be a facet of $\Delta(H_{j})$ with $x_{j}\notin A_{j}$. Then we have $A=\cup_{j\geq2}A_{j}$ is a face of $\Delta(G_{1}(\mathcal{H}))$. Since $\Delta(G_{1}(\mathcal{H}))$ satisfies Serre's condition $(S_{r})$, ${\rm link}_{\Delta(G_{1}(\mathcal{H}))}A=\Delta(H_{1})$ also satisfies Serre's condition $(S_{r})$. Hence $H_{1}$ satisfies Serre's condition $(S_{r})$. Finally, we prove that $H_{1}-x_{1}$ satisfies Serre's condition $(S_{r})$. Since $x_{1}$ is not an  isolated vertex in $G_{1}$, we can take $i$ such that $\{x_{1}, x_{i}\}$ is an edge of $G_{1}$. Then, we see that $H_{1}-x_{1}$ is a connected component of $G_{1}(\mathcal{H})-N[x_{i}]$. Since ${\rm link}_{\Delta(G_{1}(\mathcal{H}))}x_{i}$ satisfies Serre's condition $(S_{r})$, by Lemma \ref{lemma2}, it follows that $H_{1}-x_{1}$ satisfies Serre's condition $(S_{r})$, as required. 
\end{proof}

For a fixed base graph $G_{0}$, we have the following corollaries for $G_{0}(\mathcal H)$.

\begin{cor}\label{cor about Serre}
Assume that the condition $(\ast)$ holds and let $r\geq2$. Suppose that $|V(H_{i})|\geq2$ for all $i$. Fix a graph $G_{0}$ on the vertex set $X_{[m]}$ without isolated vertices. Then, the following conditions are equivalent: 
\begin{enumerate}
\item $H_{i}$ satisfies 2-Serre's condition $(S_{r})$ with respect to $x_{i}$ for all $i$. 
\item $G_{0}(\mathcal{H})$ satisfies Serre's condition $(S_{r})$. 
\end{enumerate}
\end{cor}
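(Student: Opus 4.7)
The plan is to obtain this corollary as a direct specialization of Theorem \ref{Serre}. Since the theorem already handles an arbitrary family $\mathcal{G}$ of base graphs satisfying a mild non-isolation condition, the strategy is to apply it to the singleton family $\mathcal{G} = \{G_0\}$ and check that the hypotheses go through.

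For the implication (1) $\Rightarrow$ (2), I would simply invoke Theorem \ref{Serre}, (1) $\Rightarrow$ (2): if $H_i$ satisfies 2-Serre's condition $(S_r)$ with respect to $x_i$ for all $i$, then $G_0(\mathcal{H})$ satisfies Serre's condition $(S_r)$ for \emph{every} graph on $X_{[m]}$, and in particular for the specific $G_0$ fixed in the statement. No isolated-vertex hypothesis is needed for this direction.

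For the converse (2) $\Rightarrow$ (1), the idea is to apply the equivalence (1) $\Leftrightarrow$ (3) of Theorem \ref{Serre} with the one-element family $\mathcal{G} := \{G_0\}$. The assumption $|V(H_i)| \geq 2$ for all $i$ is exactly the cardinality hypothesis required for that equivalence. To use the (3) $\Rightarrow$ (1) direction I must verify the family condition: for every $i \in \{1,\ldots,m\}$ there exists $G \in \mathcal{G}$ for which $x_i$ is not isolated in $G$. Since $\mathcal{G}$ consists of the single graph $G_0$, and $G_0$ has no isolated vertices by hypothesis, this condition holds trivially with $G = G_0$ working uniformly for every $i$. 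Condition (3) of Theorem \ref{Serre} then becomes exactly the statement that $G_0(\mathcal{H})$ satisfies $(S_r)$, i.e., condition (2) of the corollary, and we conclude that (1) holds.

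There is no genuine obstacle here: the entire argument is a bookkeeping check that the singleton family $\{G_0\}$ meets the non-isolation hypothesis of Theorem \ref{Serre}, which is immediate from the assumption that $G_0$ has no isolated vertices. The only point worth emphasizing in the write-up is that the requirement $|V(H_i)| \geq 2$ is needed precisely to access the (3) $\Rightarrow$ (1) part of Theorem \ref{Serre} (the (1) $\Rightarrow$ (2) direction does not need it).
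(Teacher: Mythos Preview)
Your proposal is correct and matches the paper's intended approach: the corollary is stated in the paper without a separate proof, as it follows immediately from Theorem~\ref{Serre} by specializing to the singleton family $\mathcal{G}=\{G_0\}$, exactly as you describe.
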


\begin{cor}
Assume that the condition $(\ast)$ holds and let $r\geq2$ and suppose that $H_{i}$ is 2-pure with respect to $x_{i}$ for all $i$. Fix a graph $G_{0}$ on the vertex set $X_{[m]}$. Then, $H_{i}$ satisfies 2-Serre's condition $(S_{r})$ with respect to $x_{i}$ for all $i$ if and only if $G_{0}(\mathcal{H})$ satisfies 2-Serre's condition $(S_{r})$ with respect to $x_{i}$ for all $i$. 
\end{cor}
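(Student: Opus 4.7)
The plan is to split the argument into the two implications and to exploit a single structural observation: deleting the root $x_{i}$ disconnects $V(H_{i})\setminus\{x_{i}\}$ from the remaining vertices, so $G_{0}(\mathcal{H})-x_{i}$ is the disjoint union of $H_{i}-x_{i}$ and $(G_{0}-x_{i})(\mathcal{H}\setminus\{H_{i}\})$, and hence
\[
\Delta(G_{0}(\mathcal{H})-x_{i})=\Delta(H_{i}-x_{i})*\Delta((G_{0}-x_{i})(\mathcal{H}\setminus\{H_{i}\})).
\]
Lemma \ref{lemma2} will then move Serre's condition freely between the join and its two factors, while the 2-purity hypothesis on each $H_{j}$ ensures that maximal independent sets of $H_{j}$ avoiding the root $x_{j}$ exist and realize $\alpha(H_{j})$.

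For the forward implication, Theorem \ref{Serre} first gives that $G_{0}(\mathcal{H})$ satisfies $(S_{r})$. Fix $i$. In the join above, the first factor satisfies $(S_{r})$ by the 2-Serre hypothesis on $H_{i}$, and the second factor satisfies $(S_{r})$ by a further invocation of Theorem \ref{Serre} when $m\geq 3$ (the remaining $m-1$ graphs still meet the required hypotheses) and directly from the hypothesis on the sole remaining $H_{j}$ when $m=2$; Lemma \ref{lemma2} then yields $(S_{r})$ for $G_{0}(\mathcal{H})-x_{i}$. For the dimension equality, Proposition \ref{well-covered} gives that $G_{0}(\mathcal{H})$ is well-covered; picking, via 2-purity, a maximal independent set $A_{j}$ of $H_{j}$ missing $x_{j}$ for each $j$, the union $\bigcup_{j}A_{j}$ is a maximal independent set of $G_{0}(\mathcal{H})$ (the $G_{0}$-edges impose no restriction since no $x_{j}$ is included, and each $A_{j}$ is already maximal inside $H_{j}$), so $\alpha(G_{0}(\mathcal{H}))=\sum_{j}\alpha(H_{j})$. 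The same bookkeeping applied after deleting $x_{i}$, together with $\alpha(H_{i}-x_{i})=\alpha(H_{i})$ from 2-purity, gives $\alpha(G_{0}(\mathcal{H})-x_{i})=\sum_{j}\alpha(H_{j})$, matching.

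For the converse, fix $i$. The dimension condition for $H_{i}$ with respect to $x_{i}$ is already part of the standing 2-purity assumption, so nothing new is needed there. That $H_{i}-x_{i}$ satisfies $(S_{r})$ drops out of Lemma \ref{lemma2} applied to the join decomposition, since $G_{0}(\mathcal{H})-x_{i}$ satisfies $(S_{r})$ by the 2-Serre hypothesis on $G_{0}(\mathcal{H})$. The remaining task is to show $H_{i}$ itself satisfies $(S_{r})$, and here the plan is to realize $\Delta(H_{i})$ as a link inside $\Delta(G_{0}(\mathcal{H}))$: for each $j\neq i$ choose (using 2-purity of $H_{j}$) a maximal independent set $A_{j}$ of $H_{j}$ with $x_{j}\notin A_{j}$, and set $A=\bigcup_{j\neq i}A_{j}$. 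Because $A$ meets no root, the $G_{0}$-edges impose no adjacency constraint when extending $A$ by a subset of $V(H_{i})$; conversely, any candidate vertex $v\in V(H_{j})$ with $j\neq i$ would force $A_{j}\cup\{v\}$ to be independent in $H_{j}$, contradicting maximality of $A_{j}$. A direct check therefore identifies ${\rm link}_{\Delta(G_{0}(\mathcal{H}))}(A)$ with $\Delta(H_{i})$, and since links preserve $(S_{r})$ by \cite[Lemma 2.2]{htyn}, $H_{i}$ inherits $(S_{r})$.

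The main obstacle I anticipate is precisely this final link identification in the converse: one has to pin down that the chosen $A$ cannot be extended by any vertex outside $V(H_{i})$, and that no vertex of $V(H_{i})$ is forbidden by a $G_{0}$-edge from joining $A$. Both issues are resolved by exploiting 2-purity of every $H_{j}$ to supply maximal independent sets that avoid the roots, which is why the 2-purity hypothesis on all of the $H_{j}$ (not just on $H_{i}$) is crucial to the argument.
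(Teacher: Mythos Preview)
Your argument is correct. The forward direction matches the paper's approach, though you spell out what the paper compresses into a citation of Proposition~\ref{2-pure}, Lemma~\ref{lemma2}, and Theorem~\ref{Serre}.

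For the converse, you take a genuinely different route from the paper. The paper extracts from $G_{0}(\mathcal{H})$ that both $H_{i}-x_{i}$ and $H_{i}-N_{H_{i}}[x_{i}]$ satisfy $(S_{r})$ (via Lemma~\ref{lemma2} applied to the join decompositions of $G_{0}(\mathcal{H})-x_{i}$ and $G_{0}(\mathcal{H})-N[x_{i}]$), verifies the condition $\alpha(H_{i}-x_{i})=\alpha(H_{i}-N_{H_{i}}[x_{i}])+1$, and then invokes Proposition~\ref{2-S_{2}} to conclude that $H_{i}$ satisfies 2-Serre's $(S_{r})$ with respect to $x_{i}$. You instead bypass $H_{i}-N_{H_{i}}[x_{i}]$ and Proposition~\ref{2-S_{2}} entirely: you realize $\Delta(H_{i})$ itself as ${\rm link}_{\Delta(G_{0}(\mathcal{H}))}(A)$ for a suitably chosen face $A$, so $H_{i}$ inherits $(S_{r})$ directly from \cite[Lemma 2.2]{htyn}, and then the dimension equality is immediate from the standing 2-purity hypothesis. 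This link trick is exactly the one the paper deploys in the proof of the second assertion of Theorem~\ref{Serre}, so you are repurposing a technique from elsewhere in the paper rather than following the paper's own proof of this corollary. Your approach is slightly more direct in that it verifies the definition of 2-Serre's $(S_{r})$ head-on, while the paper's route through Proposition~\ref{2-S_{2}} keeps the argument parallel to the structure used in Lemma~\ref{lemma1}.

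One small point worth making explicit in your write-up: when you invoke Theorem~\ref{Serre} on $(G_{0}-x_{i})(\mathcal{H}\setminus\{H_{i}\})$ for $m\geq 3$, condition $(\ast)$ requires the sub-product to be nonempty. This is guaranteed here because the standing 2-purity assumption forces $|V(H_{j})|\geq 2$ for every $j$ (a single-vertex $H_{j}$ would fail the dimension equality in the definition of 2-pure), so each $H_{j}$ contributes at least one edge.
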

\begin{proof}
If $H_{i}$ satisfies 2-Serre's condition $(S_{r})$ with respect to $x_{i}$ for all $i$, then, by Proposition \ref{2-pure}, Lemma \ref{lemma2} and Theorem \ref{Serre}, we see that $G_{0}(\mathcal{H})$ satisfies 2-Serre's condition $(S_{r})$ with respect to $x_{i}$ for all $i$.  Hence we suppose that $G_{0}(\mathcal{H})$ satisfies 2-Serre's condition $(S_{r})$ with respect to $x_{i}$ for all $i$. Fix $i=1,\ldots,m$. By Lemma \ref{lemma2}, one can see that $H_{i}-x_{i}$ and $H_{i}-N[x_{i}]$ satisfy Serre's condition $(S_{r})$. By the assumption, we have$$\displaystyle\sum_{1\leq j\leq m}\alpha(H_{j})=\alpha(G_{0}(\mathcal{H}))=\alpha(G_{0}(\mathcal{H})-x_{i})=\alpha(H_{i}-x_{i})+\displaystyle\sum_{j\neq i}\alpha(H_{j}).$$This implies that $\alpha(H_{i})=\alpha(H_{i}-x_{i})$ and hence, since $H_{i}$ is well-covered, we obtain that $\alpha(H_{i}-x_{i})=\alpha(H_{i}-N_{H_{i}}[x_{i}])+1$. Therefore, by Proposition \ref{2-S_{2}}, the assertion follows.  
\end{proof}

Let us recall the definition the Serre index, which was introduced in \cite{ppty}. 
Let $\Delta$ be a simplicial complex with $\dim\Delta=d-1$. Then the {\it Serre index} of $\Bbbk[\Delta]$, denoted by {\rm S-ind}$(\Bbbk[\Delta])$, is defined by $$\mbox{{\rm S-ind}}(\Bbbk[\Delta])=\max\{r\leq d\,\,: \Bbbk[\Delta]\mbox{ satisfies Serre's condition }(S_{r})\}.$$

\begin{cor}
Assume that the condition $(\ast)$ holds and let $r\geq2$. Then we have
\begin{align*}
&\min\{\mbox{\rm S-ind}(S/I(G_{0}(\mathcal{H})))\,\,: G_{0}\mbox{ is a graph on }X_{[m]}\} \\
&=\min_{1\leq i\leq m}\{\max\{r\,\,: H_{i}\mbox{ satisfies }2\mbox{-Serre's condition }(S_{r})\mbox{ with respect to }x_{i}\}\}
\end{align*}
\end{cor}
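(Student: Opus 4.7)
The plan is to observe that the asserted equality is an immediate quantifier-level restatement of Theorem \ref{Serre}. Let $a$ denote the left-hand side and $b$ denote the right-hand side of the claimed identity. Because Serre's condition $(S_{r+1})$ implies Serre's condition $(S_{r})$, and likewise 2-Serre's condition $(S_{r+1})$ with respect to $x$ implies 2-Serre's condition $(S_{r})$ with respect to $x$, both $a$ and $b$ are obtained as the supremum of a collection of integers $r \geq 2$ satisfying a monotone condition. Hence it suffices to prove that, for each fixed integer $r \geq 2$, the inequality $a \geq r$ is equivalent to the inequality $b \geq r$.

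Fix such an $r$. Unpacking the definition of the Serre index and of the outer minimum, $a \geq r$ is equivalent to the statement that $\mbox{S-ind}(S/I(G_{0}(\mathcal{H}))) \geq r$ for every graph $G_{0}$ on $X_{[m]}$, which in turn is equivalent to the statement that $G_{0}(\mathcal{H})$ satisfies Serre's condition $(S_{r})$ for every such $G_{0}$; this is precisely condition (2) of Theorem \ref{Serre}. On the other side, unpacking the minimum and the inner maximum in $b$, the inequality $b \geq r$ is equivalent to the statement that $H_{i}$ satisfies 2-Serre's condition $(S_{r})$ with respect to $x_{i}$ for every $i = 1, \ldots, m$, which is precisely condition (1) of Theorem \ref{Serre}. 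The equivalence $(1) \iff (2)$ provided by Theorem \ref{Serre} thus gives $a \geq r$ if and only if $b \geq r$, and taking the supremum over $r \geq 2$ yields $a = b$.

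The main step is nothing more than the direct invocation of Theorem \ref{Serre}; there is no genuine technical obstacle. The only point of bookkeeping is that 2-Serre's condition is defined only for $r \geq 2$, so the max defining $b$ implicitly ranges over $r \geq 2$; this matches the fact that Serre's condition $(S_{1})$ holds automatically for the reduced ring $S/I(G_{0}(\mathcal{H}))$, so the Serre index on the left is always at least $1$ and both $a$ and $b$ are compared over the same nontrivial range $r \geq 2$ where Theorem \ref{Serre} applies.
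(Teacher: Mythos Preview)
Your proof is correct and follows essentially the same approach as the paper's: both arguments reduce the corollary to a direct application of Theorem~\ref{Serre}, showing that the two minima agree by checking the equivalence level-by-level in $r$. Your presentation is arguably cleaner, since you phrase the argument as ``$a\geq r \iff b\geq r$ for all $r\geq 2$'' rather than separately proving two inequalities, and you avoid the paper's notational overload of using $r$ both as a fixed quantity and as a bound variable.
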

\begin{proof}
Set $r=\min\{\mbox{\rm S-ind}(S/I(G_{0}(\mathcal{H})))\,\,: G_{0}\mbox{ is a graph on }X_{[m]}\}$ and $$p=\min_{1\leq i\leq m}\{\max\{r\,\,: H_{i}\mbox{ satisfies }2\mbox{-Serre's condition }(S_{r})\mbox{ with respect to }x_{i}\}\}.$$ Then, we see that $G_{0}(\mathcal{H})$ satisfies Serre's condition $(S_{r})$ for any graph $G_{0}$ on the vertex set $X_{[m]}$. Hence, by Theorem \ref{Serre}, $H_{i}$ satisfies 2-Serre's condition $(S_{r})$ with respect to $x_{i}$ for all $i$. Therefore, we obtain that $r\geq p$. Conversely, since $H_{i}$ satisfies 2-Serre's condition $(S_{p})$ with respect to $x_{i}$ for all $i$, by Theorem \ref{Serre}, $G_{0}(\mathcal{H})$ satisfies Serre's condition $(S_{p})$ for any graph $G_{0}$ on the vertex set $X_{[m]}$. Hence we obtain that $r\leq p$, as required. 
\end{proof}

In particular, we state the definition of 2-Cohen--Macaulay with respect to a vertex. 

\begin{defi}
Let $G$ be a graph and let $x$ be a vertex of $G$. Then, we call $G$ is {\it 2-Cohen--Macaulay with respect to $x$}, if the following conditions are satisfied: 
\begin{enumerate}
\item $G$ is Cohen--Macaulay. 
\item $G-x$ is Cohen--Macaulay with $\dim S/I(G)=\dim S/(I(G)+x)$. 
\end{enumerate}
If $G$ is 2-Cohen--Macaulay for all vertices, then $G$ is called  {\it 2-Cohen--Macaulay.} 
\end{defi}

Thanks to Theorem \ref{Serre}, in particular, we obtain that the following corollaries: 
\begin{cor}\label{CM}
Assume that the condition $(\ast)$ holds and let $r\geq2$. Then, the following conditions are equivalent: 
\begin{enumerate}
\item $H_{i}$ is 2-Cohen--Macaulay with respect to $x_{i}$ for all $i$. 
\item $G_{0}(\mathcal{H})$ is Cohen--Macaulay for every graph $G_{0}$ on the vertex set $X_{[m]}$. 
\end{enumerate}
Moreover, under the assumption that $|V(H_{i})| \geq 2$ for all $i$, the following condition is equivalent to the conditions above:
\begin{enumerate}[start=3]
\item $G(\mathcal{H})$ is Cohen--Macaulay for every graph $G$ in $\mathcal{G}$, 
\end{enumerate}
where $\mathcal{G}$ is a family of graphs on the vertex set $X_{[m]}$ which satisfies the following condition: 
\begin{center}
For any $i=1,\ldots, m$, there exists $G\in\mathcal{G}$ such that $x_{i}$ is not an isolated vertex in $G$. 
\end{center}
\end{cor}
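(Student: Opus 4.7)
The plan is to reduce Corollary \ref{CM} to Theorem \ref{Serre} by exploiting the basic fact that a finitely generated graded $S$-module $M$ is Cohen--Macaulay if and only if it satisfies Serre's condition $(S_r)$ for some (equivalently, every) $r \geq \dim M$. Under this translation, the definition of 2-Cohen--Macaulay with respect to $x_i$ matches the definition of 2-Serre's condition $(S_r)$ with respect to $x_i$ once $r$ is taken to be at least the relevant dimension, because both definitions include the identical dimension equality $\dim S/I(H_i) = \dim S/(I(H_i) + x_i)$.

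For the implication $(1) \Rightarrow (2)$, I will assume that each $H_i$ is 2-Cohen--Macaulay with respect to $x_i$. Then $H_i$ and $H_i - x_i$ are Cohen--Macaulay, so each satisfies $(S_r)$ for every $r \geq 2$; combined with the dimension equality, this says $H_i$ satisfies 2-Serre's condition $(S_r)$ with respect to $x_i$ for every $r \geq 2$. Fix any graph $G_0$ on $X_{[m]}$ and set $r = \dim S/I(G_0(\mathcal{H}))$; Theorem \ref{Serre} then yields that $G_0(\mathcal{H})$ satisfies $(S_r)$, which is the same as Cohen--Macaulayness.

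For $(2) \Rightarrow (1)$, I will assume $G_0(\mathcal{H})$ is Cohen--Macaulay for every graph $G_0$ on $X_{[m]}$, so in particular it satisfies $(S_r)$ for every $r \geq 2$ and every $G_0$. Theorem \ref{Serre} then gives that each $H_i$ satisfies 2-Serre's condition $(S_r)$ with respect to $x_i$ for every $r \geq 2$. Choosing $r$ larger than $\dim S/I(H_i)$ promotes this to Cohen--Macaulayness of $H_i$ and of $H_i - x_i$, and the required dimension equality is already packaged in the hypothesis, so $H_i$ is 2-Cohen--Macaulay with respect to $x_i$.

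Finally, the equivalence with (3) under the extra assumption $|V(H_i)| \geq 2$ is proved by the identical reduction, now invoking the second half of Theorem \ref{Serre} in place of the first. The only subtle point, which I expect to be purely bookkeeping, is tracking that the dimension equality carries correctly across the two translations; since all graphs are finite, taking $r$ larger than every relevant $\dim S/I(\cdot)$ causes no difficulty, and no genuine new ingredient beyond Theorem \ref{Serre} is needed.
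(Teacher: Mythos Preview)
Your proposal is correct and matches the paper's approach: the paper states Corollary \ref{CM} immediately after Theorem \ref{Serre} with only the remark ``Thanks to Theorem \ref{Serre}, in particular, we obtain the following corollaries,'' so the intended proof is exactly the reduction you describe, using that Cohen--Macaulayness is equivalent to Serre's condition $(S_r)$ for $r \geq \dim$.
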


\begin{cor}\label{cor about CM}
Assume that the condition $(\ast)$ holds and let $r\geq2$. Suppose that $|V(H_{i})|\geq2$ for all $i$. Fix a graph $G_{0}$ on the vertex set $X_{[m]}$ without isolated vertices. Then, the following conditions are equivalent: 
\begin{enumerate}
\item $H_{i}$ is 2-Cohen--Macaulay with respect to $x_{i}$ for all $i$. 
\item$G_{0}(\mathcal{H})$ is Cohen--Macaulay. 
\end{enumerate}
\end{cor}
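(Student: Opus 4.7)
The plan is to reduce the statement to Corollary \ref{cor about Serre} via the standard observation that, for a Stanley--Reisner ring (in particular $S/I(G)$ for any graph $G$), Cohen--Macaulayness is equivalent to purity together with Serre's condition $(S_{r})$ at $r=\dim S/I(G)$. Since squarefree monomial quotients are reduced, $(S_{1})$ is automatic, so the range $r\geq 2$ in Corollary \ref{cor about Serre} suffices for our purposes.

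For the implication $(1)\Rightarrow (2)$, I would first note that if each $H_{i}$ is 2-Cohen--Macaulay with respect to $x_{i}$, then both $H_{i}$ and $H_{i}-x_{i}$ are Cohen--Macaulay and $\dim S/I(H_{i})=\dim S/(I(H_{i})+x_{i})$; in particular $H_{i}$ satisfies 2-Serre's condition $(S_{r})$ with respect to $x_{i}$ for every $r\geq 2$. Applying Corollary \ref{cor about Serre} to each such $r$ gives that $G_{0}(\mathcal{H})$ satisfies $(S_{r})$ for every $r\geq 2$. Since the 2-Cohen--Macaulay hypothesis in particular makes each $H_{i}$ 2-pure with respect to $x_{i}$, Proposition \ref{well-covered} ensures $G_{0}(\mathcal{H})$ is well-covered, hence $S/I(G_{0}(\mathcal{H}))$ is pure. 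Taking $r=\dim S/I(G_{0}(\mathcal{H}))$ then yields Cohen--Macaulayness.

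For the converse $(2)\Rightarrow(1)$, assume $G_{0}(\mathcal{H})$ is Cohen--Macaulay, so it satisfies $(S_{r})$ for every $r\geq 2$. Corollary \ref{cor about Serre} then delivers that $H_{i}$ satisfies 2-Serre's condition $(S_{r})$ with respect to $x_{i}$ for all $i$ and all $r\geq 2$. Choosing $r$ at least $\dim S/I(H_{i})$ (which coincides with $\dim S/(I(H_{i})+x_{i})$ because this equality is packaged into the 2-Serre condition), we conclude that $H_{i}$ and $H_{i}-x_{i}$ are Cohen--Macaulay and have the same Krull dimension, which is precisely the statement that $H_{i}$ is 2-Cohen--Macaulay with respect to $x_{i}$.

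The proof is essentially formal given Corollary \ref{cor about Serre} and Proposition \ref{well-covered}, so no serious obstacle is anticipated; the only piece of bookkeeping worth flagging is that the dimension compatibility appearing in the definition of 2-Cohen--Macaulayness is exactly the one already absorbed into 2-Serre's condition, so it passes cleanly between $H_{i}$ and $G_{0}(\mathcal{H})$ in both directions.
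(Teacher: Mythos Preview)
Your argument is correct and is essentially the paper's intended route: the paper states Corollary~\ref{cor about CM} without proof as an immediate consequence of Theorem~\ref{Serre} (equivalently, of Corollary~\ref{cor about Serre} or Corollary~\ref{CM}), and your reduction---specializing the Serre-condition statement to $r\geq\dim$---is exactly the standard way to read off the Cohen--Macaulay case. A marginally shorter path is to invoke Corollary~\ref{CM}(3) directly with $\mathcal{G}=\{G_{0}\}$, which is admissible since $G_{0}$ has no isolated vertices; this avoids the separate appeal to Proposition~\ref{well-covered} for purity, but the content is the same.
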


As a corollary, we obtain the following theorem proved by Hoang and Pham. 
\begin{cor}\cite[Theorem 2.6]{hp}
Let $G$ be a graph and $\mathcal{H}=\{H_{v}\,\,: v\in V(G)\}$. The following conditions are equivalent: 
\begin{enumerate}
\item $G\circ\mathcal{H}$ is Cohen--Macaulay. 
\item $G\circ\mathcal{H}$ is well-covered. 
\item $G\circ\mathcal{H}$ is a clique corona graph. 
\end{enumerate}
\end{cor}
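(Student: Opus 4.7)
The plan is to realise $G\circ\mathcal{H}$ as a rooted product and then invoke the machinery of this section. Writing $V(G)=\{x_{1},\ldots,x_{h}\}$, let $H'_{i}$ be the graph obtained from $H_{x_{i}}$ by adjoining a new root vertex $x_{i}$ joined to every vertex of $H_{x_{i}}$, and set $\mathcal{H}'=\{H'_{1},\ldots,H'_{h}\}$. Each $H'_{i}$ is connected and $G\circ\mathcal{H}=G(\mathcal{H}')$ with root $x_{i}\in V(H'_{i})$. The implication $(1)\Rightarrow(2)$ is immediate since Cohen--Macaulay edge ideals are unmixed.

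For $(2)\Rightarrow(3)$ I would argue directly on $G\circ\mathcal{H}$. At each root $x_{i}$, a minimal vertex cover $M$ of $G\circ\mathcal{H}$ is of one of two rigid local types: either $x_{i}\in M$, in which case $M\cap V(H_{x_{i}})$ must be a minimal vertex cover of $H_{x_{i}}$ (contributing $1+\tau(H_{x_{i}})$ vertices locally), or $x_{i}\notin M$, which forces $V(H_{x_{i}})\subseteq M$ in order to cover every edge $\{x_{i},v\}$ (contributing $|V(H_{x_{i}})|$ vertices locally). Starting from the canonical minimal vertex cover $M_{0}=\bigcup_{j}(\{x_{j}\}\cup C_{j})$, where each $C_{j}$ is a minimal vertex cover of $H_{x_{j}}$, I would form $M_{1}$ by swapping $\{x_{i}\}\cup C_{i}$ for $V(H_{x_{i}})$ and verify that $M_{1}$ is still a minimal vertex cover: the edges of $G$ incident to $x_{i}$ remain covered by the neighbours $x_{j}\in M_{1}$, while removing any $v\in V(H_{x_{i}})$ leaves the edge $\{x_{i},v\}$ uncovered. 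Well-coveredness then yields $|M_{0}|=|M_{1}|$, i.e.\ $1+\tau(H_{x_{i}})=|V(H_{x_{i}})|$, equivalently $\alpha(H_{x_{i}})=1$, so $H_{x_{i}}$ is a complete graph.

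For $(3)\Rightarrow(1)$, if every $H_{x_{i}}$ is complete then each $H'_{i}$ is itself a complete graph. Its independence complex is a pure $0$-dimensional set of isolated vertices, hence Cohen--Macaulay; the same holds for $H'_{i}-x_{i}$, and clearly $\dim S/I(H'_{i})=1=\dim S/(I(H'_{i})+(x_{i}))$. Thus each $H'_{i}$ is $2$-Cohen--Macaulay with respect to $x_{i}$, and Corollary~\ref{CM} yields that $G(\mathcal{H}')=G\circ\mathcal{H}$ is Cohen--Macaulay (the degenerate case $h=1$ reduces to $G\circ\mathcal{H}=H'_{1}$ being complete, which is handled by the same vertex-cover analysis).

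The crux of the argument is the vertex-cover analysis in $(2)\Rightarrow(3)$: pinning down the two possible local behaviours of a minimal vertex cover of $G\circ\mathcal{H}$ at a fixed root and producing the two explicit global extensions $M_{0}$ and $M_{1}$ that realise them. Once this bookkeeping is in place, well-coveredness collapses to the numerical identity $\alpha(H_{x_{i}})=1$, and the remaining implications follow by direct appeal to the rooted-product results established above.
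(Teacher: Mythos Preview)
Your proof is correct. The paper states this result as a corollary with no explicit proof, the intended derivation being to realise $G\circ\mathcal{H}$ as the rooted product $G(\mathcal{H}')$ and then read everything off from Proposition~\ref{well-covered} and Corollary~\ref{CM}: in that framework, $(2)\Rightarrow(3)$ goes via the observation that $2$-purity of $H'_i$ with respect to $x_i$ makes $H'_i$ well-covered, and since the cone point $\{x_i\}$ is already a maximal independent set of $H'_i$, well-coveredness forces $\alpha(H'_i)=1$, hence $H'_i$ is complete. Your treatment of $(3)\Rightarrow(1)$ is exactly this machinery. For $(2)\Rightarrow(3)$, however, you bypass Proposition~\ref{well-covered} and instead construct two explicit minimal vertex covers $M_0$ and $M_1$ of the whole graph $G\circ\mathcal{H}$ and compare their sizes directly. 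This hands-on argument is closer to the original proof in~\cite{hp}; it is entirely self-contained and sidesteps the hypotheses on $\mathcal{G}$ and on isolated vertices that the rooted-product criteria carry. The paper's route, by contrast, exhibits the Hoang--Pham theorem as a genuine specialisation of the general rooted-product theory developed in this section, which is precisely the point of listing it as a corollary.
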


%%%%%%%%%%%%%%%%%%%%%%%%%%%%%%%%%%%%%%%%%%%%%%%%%%%%%%%%%%%%%%%%%%%%%%%%%%%%

\section{A classification of 2-pureness and shedding vertices with $|V(G)|\leq 6$}

In this section, we define the concept of 2-pure with respect to a vertex.  We also classify when $G$ is 2-pure with respect to a vertex, in the case where $G$ has at most 6 vertices. Moreover, we classify its shedding vertices. This classification is based on the list of well-covered graphs with up to 6 vertices provided in the Appendix of \cite[pp. 635--637]{v2}. First, we define the concept of 2-pure with respect to a vertex. 

We set $U(G)=\{x\in V(G)\,\,: G\mbox{ is 2-pure with respect to }x\}$ and ${\rm Shed}(G)=\{x\in V(G)\,\,: x\mbox{ is a shedding vertex of }G\}$, where if $G$ is not vertex decomposable, then we set ${\rm Shed}(G)=\emptyset$. The following table provides a classification of all well-covered graphs $G$ with $|V(G)| \leq 6$, listing the set $U(G)$ and the set ${\rm Shed}(G)$:   

\begin{center}
\begin{longtable}{|>{\centering\arraybackslash}m{1.5cm}|c|>{\centering\arraybackslash}m{3cm}|>{\centering\arraybackslash}m{3cm}|}

\hline
$|V(G)|$ & Graph $G$ & $U(G)$ & ${\rm Shed}(G)$\\
\hline
\endfirsthead

\hline
$|V(G)|$ & Graph $G$ & $U(G)$ & ${\rm Shed}(G)$\\
\hline
\endhead

\raisebox{0pt}[0.8cm][0.8cm]{2} 
& \begin{tikzpicture}[baseline=-0.5ex, scale=1.0]
  \node[draw, circle, inner sep=1.2pt, fill=black] (a) at (0,0) {};
  \node[draw, circle, inner sep=1.2pt, fill=black] (b) at (1.2,0) {};
  \draw (a) -- (b);
  \node[below=2pt of a] {1};
  \node[below=2pt of b] {2};
\end{tikzpicture}
& \raisebox{0pt}[0.8cm][0.8cm]{$\{1, 2\}$}
& \raisebox{0pt}[0.8cm][0.8cm]{$\{1, 2\}$} \\
\hline

\raisebox{0.3cm}[0.8cm][0.8cm]{3} 
& \begin{tikzpicture}[baseline=-0.5ex, scale=1.0]
  \node[draw, circle, inner sep=1.2pt, fill=black] (a) at (0,0) {};
  \node[draw, circle, inner sep=1.2pt, fill=black] (b) at (1,0) {};
  \node[draw, circle, inner sep=1.2pt, fill=black] (c) at (0.5,0.866) {};
  \draw (a) -- (b);
  \draw (a) -- (c);
  \draw (b) -- (c);
  \node[below=2pt of a] {2};
  \node[below=2pt of b] {3};
  \node[above=2pt of c] {1};
\end{tikzpicture}
& \raisebox{0.3cm}[0.8cm][0.8cm]{$\{1,2,3\}$} 
& \raisebox{0.3cm}[0.8cm][0.8cm]{$\{1,2,3\}$}  \\ 
\hline

\raisebox{-0.2cm}[0.8cm][0.8cm]{4} 
& \begin{tikzpicture}[baseline=-0.5ex, scale=1.0]
  \node[draw, circle, inner sep=1.2pt, fill=black] (a) at (0,0) {};
  \node[draw, circle, inner sep=1.2pt, fill=black] (b) at (1.2,0) {};
  \node[draw, circle, inner sep=1.2pt, fill=black] (c) at (2.4,0) {};
  \node[draw, circle, inner sep=1.2pt, fill=black] (d) at (3.6,0) {};
  \draw (a) -- (b);
  \draw (b) -- (c);
  \draw (c) -- (d);
  \node[below=2pt of a] {1};
  \node[below=2pt of b] {2};
  \node[below=2pt of c] {3};
  \node[below=2pt of d] {4};
\end{tikzpicture}
& \raisebox{-0.2cm}[0.8cm][0.8cm]{$\{ 2,3 \}$} 
& \raisebox{-0.2cm}[0.8cm][0.8cm]{$\{ 2,3 \}$} \\
\hline

\raisebox{0.5cm}[0.8cm][0.8cm]{4}
& \begin{tikzpicture}[baseline=-0.5ex, scale=1.0]
  \node[draw, circle, inner sep=1.2pt, fill=black] (a) at (0,0) {};
  \node[draw, circle, inner sep=1.2pt, fill=black] (b) at (1.5,0) {};
  \node[draw, circle, inner sep=1.2pt, fill=black] (c) at (1.5,1.2) {};
  \node[draw, circle, inner sep=1.2pt, fill=black] (d) at (0,1.2) {};
  \draw (a) -- (b);
  \draw (a) -- (c);
  \draw (a) -- (d);
  \draw (b) -- (c);
  \draw (b) -- (d);
  \draw (c) -- (d);
  \node[below=2pt of a] {1};
  \node[below=2pt of b] {2};
  \node[above=2pt of c] {3};
  \node[above=2pt of d] {4};
\end{tikzpicture}
& \raisebox{0.5cm}[0.8cm][0.8cm]{$\{1,2,3,4\}$} 
& \raisebox{0.5cm}[0.8cm][0.8cm]{$\{1,2,3,4\}$} \\
\hline

\raisebox{0.1cm}[0.8cm][0.8cm]{5}
& \begin{tikzpicture}[baseline=-0.5ex, scale=0.9]
  \def\r{1.0}
  \foreach \i/\name in {1/a,2/b,3/c,4/d,5/e} {
    \node[draw, circle, inner sep=1.2pt, fill=black] (\name) at ({90 + (\i - 1)*72}:\r) {};
  }
  \foreach \i/\j in {a/b, b/c, c/d, d/e, e/a} {
    \draw (\i) -- (\j);
  }
  \foreach \i/\name in {1/a,2/b,3/c,4/d,5/e} {
    \node at ($(0,0)!1.25!(\name)$) {\i};
  }
\end{tikzpicture}

& \raisebox{0.1cm}[0.8cm][0.8cm]{$\{1,2,3,4,5\}$} 
& \raisebox{0.1cm}[0.8cm][0.8cm]{$\{1,2,3,4,5\}$} \\
\hline

\raisebox{0pt}[0.8cm][0.8cm]{5}
& \begin{tikzpicture}[baseline=-0.5ex, scale=0.9]
  \def\r{1.0}
  \foreach \i/\name in {1/a,2/b,3/c,4/d,5/e} {
    \node[draw, circle, inner sep=1.2pt, fill=black] (\name) at ({90 + (\i - 1)*72}:\r) {};
  }
  \foreach \i/\j in {a/b, b/c, c/d, d/e, e/a, b/e} {
    \draw (\i) -- (\j);
  }
  \foreach \i/\name in {1/a,2/b,3/c,4/d,5/e} {
    \node at ($(0,0)!1.25!(\name)$) {\i};
  }
\end{tikzpicture}
& \raisebox{0pt}[0.8cm][0.8cm]{$\{1,2,5\}$} 
& \raisebox{0pt}[0.8cm][0.8cm]{$\{1,2,5\}$} \\
\hline

\raisebox{0pt}[0.8cm][0.8cm]{5}
& \begin{tikzpicture}[baseline=-0.5ex, scale=1]
  \foreach \i/\x/\y in {
    1/0/0, 2/1.2/0, 3/2.4/0, 4/3.6/0.6, 5/3.6/-0.6
  } {
    \node[draw, circle, inner sep=1.2pt, fill=black] (x\i) at (\x,\y) {};
  }
  \foreach \i/\j in {1/2, 2/3, 3/4, 3/5, 4/5} {
    \draw (x\i) -- (x\j);
  }
  \foreach \i/\x/\y in {
    1/0/-0.3, 2/1.2/-0.3, 3/2.4/-0.3, 4/3.6/0.9, 5/3.6/-0.9
  } {
    \node at (\x,\y) {\i};
  }
\end{tikzpicture}
& \raisebox{0pt}[0.8cm][0.8cm]{$\{2, 3, 4, 5\}$} 
& \raisebox{0pt}[0.8cm][0.8cm]{$\{2, 3, 4, 5\}$} \\
\hline

\raisebox{0.5cm}[0.8cm][0.8cm]{5}
& \begin{tikzpicture}[baseline=-0.5ex, scale=1.2]
  \foreach \i/\x/\y in {
    1/0/0, 2/1.2/0, 3/2.4/0, 4/2.4/1.2, 5/1.2/1.2
  } {
    \node[draw, circle, inner sep=1.2pt, fill=black] (x\i) at (\x,\y) {};
  }
  \foreach \i/\j in {1/2, 2/3, 2/5, 3/5, 3/4, 4/5} {
    \draw (x\i) -- (x\j);
  }
  \foreach \i/\x/\y in {
    1/0/-0.3, 2/1.2/-0.3, 3/2.4/-0.3, 4/2.4/1.5, 5/1.2/1.5
  } {
    \node at (\x,\y) {\i};
  }
\end{tikzpicture}
& \raisebox{0.5cm}[0.8cm][0.8cm]{$\{2, 3, 5\}$} 
& \raisebox{0.5cm}[0.8cm][0.8cm]{$\{2, 3, 5\}$} \\
\hline

\raisebox{0.1cm}[0.8cm][0.8cm]{5}
& \begin{tikzpicture}[baseline=-0.5ex, scale=0.9]
  \def\r{1.0}
  \foreach \i/\name in {1/a,2/b,3/c,4/d,5/e} {
    \node[draw, circle, inner sep=1.2pt, fill=black] (\name) at ({90 + (\i - 1)*72}:\r) {};
  }
  \foreach \i/\j in {a/b, a/c, a/d, a/e, b/c, b/d, b/e, c/d, c/e, d/e} {
    \draw (\i) -- (\j);
  }
  \foreach \i/\name in {1/a,2/b,3/c,4/d,5/e} {
    \node at ($(0,0)!1.25!(\name)$) {\i};
  }
\end{tikzpicture}
& \raisebox{0.1cm}[0.8cm][0.8cm]{$\{1, 2, 3, 4, 5\}$} 
& \raisebox{0.1cm}[0.8cm][0.8cm]{$\{1, 2, 3, 4, 5\}$} \\
\hline

\raisebox{-1.2cm}[0.8cm][0.8cm]{6}
& \begin{tikzpicture}[baseline=-0.5ex, scale=0.6]
  \node[draw, circle, inner sep=1.2pt, fill=black] (x1) at (1, 0) {};
  \node[draw, circle, inner sep=1.2pt, fill=black] (x2) at (1, -1.5) {};
  \node[draw, circle, inner sep=1.2pt, fill=black] (x3) at (0, -3) {};
  \node[draw, circle, inner sep=1.2pt, fill=black] (x4) at (2, -3) {};
  \node[draw, circle, inner sep=1.2pt, fill=black] (x5) at (0, -4.5) {};
  \node[draw, circle, inner sep=1.2pt, fill=black] (x6) at (2, -4.5) {};
  
  \foreach \i/\j in {1/2, 2/3, 2/4, 3/5, 4/6} {
    \draw (x\i) -- (x\j);
  }
  \node at (0.6, 0) {1};
  \node at (0.6, -1.5) {2};
  \node at (-0.4, -3) {3};
  \node at (2.4, -3) {4};
  \node at (-0.4, -4.5) {5};
  \node at (2.4, -4.5) {6};
\end{tikzpicture}
& \raisebox{-1.2cm}[0.8cm][0.8cm]{$\{2, 3, 4\}$} 
& \raisebox{-1.2cm}[0.8cm][0.8cm]{$\{2, 3, 4\}$} \\
\hline

\raisebox{-1.2cm}[0.8cm][0.8cm]{6}
& \begin{tikzpicture}[baseline=-0.5ex, scale=0.6]
  \node[draw, circle, inner sep=1.2pt, fill=black] (x1) at (1, 0) {};
  \node[draw, circle, inner sep=1.2pt, fill=black] (x2) at (1, -1.5) {};
  \node[draw, circle, inner sep=1.2pt, fill=black] (x3) at (0, -3) {};
  \node[draw, circle, inner sep=1.2pt, fill=black] (x4) at (2, -3) {};
  \node[draw, circle, inner sep=1.2pt, fill=black] (x5) at (0, -4.5) {};
  \node[draw, circle, inner sep=1.2pt, fill=black] (x6) at (2, -4.5) {};
  
  \foreach \i/\j in {1/2, 2/3, 2/4, 3/4, 3/5, 4/6} {
    \draw (x\i) -- (x\j);
  }
  \node at (0.6, 0) {1};
  \node at (0.6, -1.5) {2};
  \node at (-0.4, -3) {3};
  \node at (2.4, -3) {4};
  \node at (-0.4, -4.5) {5};
  \node at (2.4, -4.5) {6};
\end{tikzpicture}
& \raisebox{-1.2cm}[0.8cm][0.8cm]{$\{2, 3, 4\}$} 
& \raisebox{-1.2cm}[0.8cm][0.8cm]{$\{2, 3, 4\}$} \\
\hline
\raisebox{-0.5cm}[0.8cm][0.8cm]{6}
& \begin{tikzpicture}[baseline=-0.5ex, scale=0.6]
  \node[draw, circle, inner sep=1.2pt, fill=black] (x1) at (0, 0) {};
  \node[draw, circle, inner sep=1.2pt, fill=black] (x2) at (2, 0) {};
  \node[draw, circle, inner sep=1.2pt, fill=black] (x3) at (4, 0) {};
  \node[draw, circle, inner sep=1.2pt, fill=black] (x4) at (4, -2) {};
  \node[draw, circle, inner sep=1.2pt, fill=black] (x5) at (2, -2) {};
  \node[draw, circle, inner sep=1.2pt, fill=black] (x6) at (0, -2) {};
  
  \foreach \i/\j in {1/2, 2/3, 2/5, 3/4, 4/5, 5/6} {
    \draw (x\i) -- (x\j);
  }
  \node at (0, 0.4) {1};
  \node at (2, 0.4) {2};
  \node at (4, 0.4) {3};
  \node at (4, -2.5) {4};
  \node at (2, -2.5) {5};
  \node at (0, -2.5) {6};
\end{tikzpicture}
& \raisebox{-0.5cm}[0.8cm][0.8cm]{$\{2, 5\}$} 
& \raisebox{-0.5cm}[0.8cm][0.8cm]{$\{2, 5\}$} \\
\hline

\raisebox{0pt}[0.8cm][0.8cm]{6}
&\begin{tikzpicture}[baseline=-0.5ex, scale=0.5]
  \node[draw, circle, inner sep=1.2pt, fill=black] (x1) at (0, 2) {};
  \node[draw, circle, inner sep=1.2pt, fill=black] (x2) at (-1.73, 1) {};
  \node[draw, circle, inner sep=1.2pt, fill=black] (x3) at (-1.73, -1) {};
  \node[draw, circle, inner sep=1.2pt, fill=black] (x4) at (0, -2) {};
  \node[draw, circle, inner sep=1.2pt, fill=black] (x5) at (1.73, -1) {};
  \node[draw, circle, inner sep=1.2pt, fill=black] (x6) at (1.73, 1) {};
  
  \foreach \i/\j in {1/2, 1/6, 2/3, 2/6, 3/4, 3/5, 4/5} {
    \draw (x\i) -- (x\j);
  }
  \node at (0, 2.6) {1};
  \node at (-2.2, 1) {2};
  \node at (-2.2, -1) {3};
  \node at (0, -2.6) {4};
  \node at (2.2, -1) {5};
  \node at (2.2, 1) {6};
\end{tikzpicture}
&\raisebox{0pt}[0.8cm][0.8cm]{$\{1,2, 3, 4, 5, 6\}$} 
& \raisebox{0pt}[0.8cm][0.8cm]{$\{1, 2, 3, 4, 5, 6\}$} \\
\hline

\raisebox{0pt}[0.8cm][0.8cm]{6}
&\begin{tikzpicture}[baseline=-0.5ex, scale=0.5]
  \node[draw, circle, inner sep=1.2pt, fill=black] (x1) at (0, 2) {};
  \node[draw, circle, inner sep=1.2pt, fill=black] (x2) at (-1.73, 1) {};
  \node[draw, circle, inner sep=1.2pt, fill=black] (x3) at (-1.73, -1) {};
  \node[draw, circle, inner sep=1.2pt, fill=black] (x4) at (0, -2) {};
  \node[draw, circle, inner sep=1.2pt, fill=black] (x5) at (1.73, -1) {};
  \node[draw, circle, inner sep=1.2pt, fill=black] (x6) at (1.73, 1) {};
  
  \foreach \i/\j in {1/2, 1/6, 2/3, 2/6, 3/4, 3/5, 4/5, 5/6} {
    \draw (x\i) -- (x\j);
  }
  \node at (0, 2.6) {1};
  \node at (-2.2, 1) {2};
  \node at (-2.2, -1) {3};
  \node at (0, -2.6) {4};
  \node at (2.2, -1) {5};
  \node at (2.2, 1) {6};
\end{tikzpicture}
&\raisebox{0pt}[0.8cm][0.8cm]{$\{1,2, 3, 4, 5, 6\}$} 
& \raisebox{0pt}[0.8cm][0.8cm]{$\{1, 2, 3, 4, 5, 6\}$} \\
\hline

\raisebox{0pt}[0.8cm][0.8cm]{6}
&\begin{tikzpicture}[baseline=-0.5ex, scale=0.5]
  \node[draw, circle, inner sep=1.2pt, fill=black] (x1) at (0, 2) {};
  \node[draw, circle, inner sep=1.2pt, fill=black] (x2) at (-1.73, 1) {};
  \node[draw, circle, inner sep=1.2pt, fill=black] (x3) at (-1.73, -1) {};
  \node[draw, circle, inner sep=1.2pt, fill=black] (x4) at (0, -2) {};
  \node[draw, circle, inner sep=1.2pt, fill=black] (x5) at (1.73, -1) {};
  \node[draw, circle, inner sep=1.2pt, fill=black] (x6) at (1.73, 1) {};
  
  \foreach \i/\j in {1/2, 1/3, 1/6, 2/3, 2/4, 3/4, 4/5, 5/6} {
    \draw (x\i) -- (x\j);
  }
  \node at (0, 2.6) {1};
  \node at (-2.2, 1) {2};
  \node at (-2.2, -1) {3};
  \node at (0, -2.6) {4};
  \node at (2.2, -1) {5};
  \node at (2.2, 1) {6};
\end{tikzpicture}
&\raisebox{0pt}[0.8cm][0.8cm]{$\{1,2, 3, 4, 5, 6\}$} 
& \raisebox{0pt}[0.8cm][0.8cm]{$\{1, 2, 3 ,4, 5, 6\}$} \\
\hline

\raisebox{0pt}[0.8cm][0.8cm]{6}
&\begin{tikzpicture}[baseline=-0.5ex, scale=0.5]
  \node[draw, circle, inner sep=1.2pt, fill=black] (x1) at (0, 2) {};
  \node[draw, circle, inner sep=1.2pt, fill=black] (x2) at (-1.73, 1) {};
  \node[draw, circle, inner sep=1.2pt, fill=black] (x3) at (-1.73, -1) {};
  \node[draw, circle, inner sep=1.2pt, fill=black] (x4) at (0, -2) {};
  \node[draw, circle, inner sep=1.2pt, fill=black] (x5) at (1.73, -1) {};
  \node[draw, circle, inner sep=1.2pt, fill=black] (x6) at (1.73, 1) {};
  
  \foreach \i/\j in {1/2, 1/6, 2/6, 3/4, 3/5, 3/6, 4/5, 5/6} {
    \draw (x\i) -- (x\j);
  }
  \node at (0, 2.6) {1};
  \node at (-2.2, 1) {2};
  \node at (-2.2, -1) {3};
  \node at (0, -2.6) {4};
  \node at (2.2, -1) {5};
  \node at (2.2, 1) {6};
\end{tikzpicture}
&\raisebox{0pt}[0.8cm][0.8cm]{$\{1,2, 3, 5, 6\}$} 
& \raisebox{0pt}[0.8cm][0.8cm]{$\{1, 2, 3, 5, 6\}$} \\
\hline

\raisebox{0.5cm}[0.8cm][0.8cm]{6}
& \begin{tikzpicture}[baseline=-0.5ex, scale=1]
  \foreach \i/\x/\y in {
    1/-2/0, 2/-1/0, 3/0/0, 4/1.2/0, 5/1.2/1.2, 6/0/1.2
  } {
    \node[draw, circle, inner sep=1.2pt, fill=black] (x\i) at (\x,\y) {};
  }
  \foreach \i/\j in {1/2, 2/3, 3/4, 3/5, 3/6, 4/5, 4/6, 5/6} {
    \draw (x\i) -- (x\j);
  }
  \foreach \i/\x/\y in {
    1/-2/-0.3, 2/-1/-0.3, 3/0/-0.3, 4/1.2/-0.3, 5/1.2/1.5, 6/0/1.5
  } {
    \node at (\x,\y) {\i};
  }
\end{tikzpicture}
& \raisebox{0.5cm}[0.8cm][0.8cm]{$\{2, 3, 4, 5, 6\}$} 
& \raisebox{0.5cm}[0.8cm][0.8cm]{$\{2, 3, 4, 5, 6\}$} \\
\hline

\raisebox{0pt}[0.8cm][0.8cm]{6}
&\begin{tikzpicture}[baseline=-0.5ex, scale=0.5]
  \node[draw, circle, inner sep=1.2pt, fill=black] (x1) at (0, 2) {};
  \node[draw, circle, inner sep=1.2pt, fill=black] (x2) at (-1.73, 1) {};
  \node[draw, circle, inner sep=1.2pt, fill=black] (x3) at (-1.73, -1) {};
  \node[draw, circle, inner sep=1.2pt, fill=black] (x4) at (0, -2) {};
  \node[draw, circle, inner sep=1.2pt, fill=black] (x5) at (1.73, -1) {};
  \node[draw, circle, inner sep=1.2pt, fill=black] (x6) at (1.73, 1) {};
  
  \foreach \i/\j in {1/2, 1/4, 1/6, 2/3, 2/6, 3/4, 3/5, 4/5, 5/6} {
    \draw (x\i) -- (x\j);
  }
  \node at (0, 2.6) {1};
  \node at (-2.2, 1) {2};
  \node at (-2.2, -1) {3};
  \node at (0, -2.6) {4};
  \node at (2.2, -1) {5};
  \node at (2.2, 1) {6};
\end{tikzpicture}
&\raisebox{0pt}[0.8cm][0.8cm]{$\{1,2, 3, 4, 5, 6\}$} 
& \raisebox{0pt}[0.8cm][0.8cm]{$\{1, 2, 3, 4, 5, 6\}$} \\
\hline

\raisebox{0pt}[0.8cm][0.8cm]{6}
&\begin{tikzpicture}[baseline=-0.5ex, scale=0.5]
  \node[draw, circle, inner sep=1.2pt, fill=black] (x1) at (0, 2) {};
  \node[draw, circle, inner sep=1.2pt, fill=black] (x2) at (-1.73, 1) {};
  \node[draw, circle, inner sep=1.2pt, fill=black] (x3) at (-1.73, -1) {};
  \node[draw, circle, inner sep=1.2pt, fill=black] (x4) at (0, -2) {};
  \node[draw, circle, inner sep=1.2pt, fill=black] (x5) at (1.73, -1) {};
  \node[draw, circle, inner sep=1.2pt, fill=black] (x6) at (1.73, 1) {};
  
  \foreach \i/\j in {1/2, 1/5, 1/6, 2/3, 2/6, 3/4, 3/6, 4/5, 5/6} {
    \draw (x\i) -- (x\j);
  }
  \node at (0, 2.6) {1};
  \node at (-2.2, 1) {2};
  \node at (-2.2, -1) {3};
  \node at (0, -2.6) {4};
  \node at (2.2, -1) {5};
  \node at (2.2, 1) {6};
\end{tikzpicture}
&\raisebox{0pt}[0.8cm][0.8cm]{$\{1,2, 3, 5, 6\}$} 
& \raisebox{0pt}[0.8cm][0.8cm]{$\{1, 2, 3, 5, 6\}$} \\
\hline

\raisebox{0pt}[0.8cm][0.8cm]{6}
&\begin{tikzpicture}[baseline=-0.5ex, scale=0.5]
  \node[draw, circle, inner sep=1.2pt, fill=black] (x1) at (0, 2) {};
  \node[draw, circle, inner sep=1.2pt, fill=black] (x2) at (-1.73, 1) {};
  \node[draw, circle, inner sep=1.2pt, fill=black] (x3) at (-1.73, -1) {};
  \node[draw, circle, inner sep=1.2pt, fill=black] (x4) at (0, -2) {};
  \node[draw, circle, inner sep=1.2pt, fill=black] (x5) at (1.73, -1) {};
  \node[draw, circle, inner sep=1.2pt, fill=black] (x6) at (1.73, 1) {};
  
  \foreach \i/\j in {1/2, 1/5, 1/6, 2/3, 2/6, 3/4, 3/5, 4/5, 5/6} {
    \draw (x\i) -- (x\j);
  }
  \node at (0, 2.6) {1};
  \node at (-2.2, 1) {2};
  \node at (-2.2, -1) {3};
  \node at (0, -2.6) {4};
  \node at (2.2, -1) {5};
  \node at (2.2, 1) {6};
\end{tikzpicture}
&\raisebox{0pt}[0.8cm][0.8cm]{$\{1, 3, 4, 5, 6\}$} 
& \raisebox{0pt}[0.8cm][0.8cm]{$\{1, 3, 4, 5, 6\}$} \\
\hline

\raisebox{0pt}[0.8cm][0.8cm]{6}
&\begin{tikzpicture}[baseline=-0.5ex, scale=0.5]
  \node[draw, circle, inner sep=1.2pt, fill=black] (x1) at (0, 2) {};
  \node[draw, circle, inner sep=1.2pt, fill=black] (x2) at (-1.73, 1) {};
  \node[draw, circle, inner sep=1.2pt, fill=black] (x3) at (-1.73, -1) {};
  \node[draw, circle, inner sep=1.2pt, fill=black] (x4) at (0, -2) {};
  \node[draw, circle, inner sep=1.2pt, fill=black] (x5) at (1.73, -1) {};
  \node[draw, circle, inner sep=1.2pt, fill=black] (x6) at (1.73, 1) {};
  
  \foreach \i/\j in {1/2, 1/6, 2/3, 2/5, 2/6, 3/4, 3/5, 4/5, 5/6} {
    \draw (x\i) -- (x\j);
  }
  \node at (0, 2.6) {1};
  \node at (-2.2, 1) {2};
  \node at (-2.2, -1) {3};
  \node at (0, -2.6) {4};
  \node at (2.2, -1) {5};
  \node at (2.2, 1) {6};
\end{tikzpicture}
&\raisebox{0pt}[0.8cm][0.8cm]{$\{2, 3, 5, 6\}$} 
& \raisebox{0pt}[0.8cm][0.8cm]{$\{2, 3, 5, 6\}$} \\
\hline

\raisebox{0pt}[0.8cm][0.8cm]{6}
&\begin{tikzpicture}[baseline=-0.5ex, scale=0.5]
  \node[draw, circle, inner sep=1.2pt, fill=black] (x1) at (0, 2) {};
  \node[draw, circle, inner sep=1.2pt, fill=black] (x2) at (-1.73, 1) {};
  \node[draw, circle, inner sep=1.2pt, fill=black] (x3) at (-1.73, -1) {};
  \node[draw, circle, inner sep=1.2pt, fill=black] (x4) at (0, -2) {};
  \node[draw, circle, inner sep=1.2pt, fill=black] (x5) at (1.73, -1) {};
  \node[draw, circle, inner sep=1.2pt, fill=black] (x6) at (1.73, 1) {};
  
  \foreach \i/\j in {1/2, 1/5, 1/6, 2/3, 2/5, 2/6, 3/4, 3/5, 5/6} {
    \draw (x\i) -- (x\j);
  }
  \node at (0, 2.6) {1};
  \node at (-2.2, 1) {2};
  \node at (-2.2, -1) {3};
  \node at (0, -2.6) {4};
  \node at (2.2, -1) {5};
  \node at (2.2, 1) {6};
\end{tikzpicture}
&\raisebox{0pt}[0.8cm][0.8cm]{$\{1, 2, 3, 5, 6\}$} 
& \raisebox{0pt}[0.8cm][0.8cm]{$\{1, 2, 3, 5, 6\}$} \\
\hline

\raisebox{0pt}[0.8cm][0.8cm]{6}
&\begin{tikzpicture}[baseline=-0.5ex, scale=0.5]
  \node[draw, circle, inner sep=1.2pt, fill=black] (x1) at (0, 2) {};
  \node[draw, circle, inner sep=1.2pt, fill=black] (x2) at (-1.73, 1) {};
  \node[draw, circle, inner sep=1.2pt, fill=black] (x3) at (-1.73, -1) {};
  \node[draw, circle, inner sep=1.2pt, fill=black] (x4) at (0, -2) {};
  \node[draw, circle, inner sep=1.2pt, fill=black] (x5) at (1.73, -1) {};
  \node[draw, circle, inner sep=1.2pt, fill=black] (x6) at (1.73, 1) {};
  
  \foreach \i/\j in {1/2, 1/5, 1/6, 2/3, 2/5, 2/6, 3/4, 4/5, 5/6} {
    \draw (x\i) -- (x\j);
  }
  \node at (0, 2.6) {1};
  \node at (-2.2, 1) {2};
  \node at (-2.2, -1) {3};
  \node at (0, -2.6) {4};
  \node at (2.2, -1) {5};
  \node at (2.2, 1) {6};
\end{tikzpicture}
&\raisebox{0pt}[0.8cm][0.8cm]{$\{1, 2, 5, 6\}$} 
& \raisebox{0pt}[0.8cm][0.8cm]{$\{1, 2, 5, 6\}$} \\
\hline

\raisebox{0pt}[0.8cm][0.8cm]{6}
&\begin{tikzpicture}[baseline=-0.5ex, scale=0.5]
  \node[draw, circle, inner sep=1.2pt, fill=black] (x1) at (0, 2) {};
  \node[draw, circle, inner sep=1.2pt, fill=black] (x2) at (-1.73, 1) {};
  \node[draw, circle, inner sep=1.2pt, fill=black] (x3) at (-1.73, -1) {};
  \node[draw, circle, inner sep=1.2pt, fill=black] (x4) at (0, -2) {};
  \node[draw, circle, inner sep=1.2pt, fill=black] (x5) at (1.73, -1) {};
  \node[draw, circle, inner sep=1.2pt, fill=black] (x6) at (1.73, 1) {};
  
  \foreach \i/\j in {1/2, 1/3, 1/6, 2/3, 2/4, 3/4, 3/5, 4/5, 4/6, 5/6} {
    \draw (x\i) -- (x\j);
  }
  \node at (0, 2.6) {1};
  \node at (-2.2, 1) {2};
  \node at (-2.2, -1) {3};
  \node at (0, -2.6) {4};
  \node at (2.2, -1) {5};
  \node at (2.2, 1) {6};
\end{tikzpicture}
&\raisebox{0pt}[0.8cm][0.8cm]{$\{2, 3, 4, 5\}$} 
& \raisebox{0pt}[0.8cm][0.8cm]{$\{3, 4\}$} \\
\hline

\raisebox{0pt}[0.8cm][0.8cm]{6}
&\begin{tikzpicture}[baseline=-0.5ex, scale=0.5]
  \node[draw, circle, inner sep=1.2pt, fill=black] (x1) at (0, 2) {};
  \node[draw, circle, inner sep=1.2pt, fill=black] (x2) at (-1.73, 1) {};
  \node[draw, circle, inner sep=1.2pt, fill=black] (x3) at (-1.73, -1) {};
  \node[draw, circle, inner sep=1.2pt, fill=black] (x4) at (0, -2) {};
  \node[draw, circle, inner sep=1.2pt, fill=black] (x5) at (1.73, -1) {};
  \node[draw, circle, inner sep=1.2pt, fill=black] (x6) at (1.73, 1) {};
  
  \foreach \i/\j in {1/2, 1/3, 1/4, 1/6, 2/3, 2/4, 3/4, 3/6, 4/5, 5/6} {
    \draw (x\i) -- (x\j);
  }
  \node at (0, 2.6) {1};
  \node at (-2.2, 1) {2};
  \node at (-2.2, -1) {3};
  \node at (0, -2.6) {4};
  \node at (2.2, -1) {5};
  \node at (2.2, 1) {6};
\end{tikzpicture}
&\raisebox{0pt}[0.8cm][0.8cm]{$\{1,2, 3, 4\}$} 
& \raisebox{0pt}[0.8cm][0.8cm]{$\{1, 3, 4\}$} \\
\hline

\raisebox{0pt}[0.8cm][0.8cm]{6}
&\begin{tikzpicture}[baseline=-0.5ex, scale=0.5]
  \node[draw, circle, inner sep=1.2pt, fill=black] (x1) at (0, 2) {};
  \node[draw, circle, inner sep=1.2pt, fill=black] (x2) at (-1.73, 1) {};
  \node[draw, circle, inner sep=1.2pt, fill=black] (x3) at (-1.73, -1) {};
  \node[draw, circle, inner sep=1.2pt, fill=black] (x4) at (0, -2) {};
  \node[draw, circle, inner sep=1.2pt, fill=black] (x5) at (1.73, -1) {};
  \node[draw, circle, inner sep=1.2pt, fill=black] (x6) at (1.73, 1) {};
  
  \foreach \i/\j in {1/2, 1/3, 1/5, 1/6, 2/3, 2/5, 2/6, 3/4, 3/5, 5/6} {
    \draw (x\i) -- (x\j);
  }
  \node at (0, 2.6) {1};
  \node at (-2.2, 1) {2};
  \node at (-2.2, -1) {3};
  \node at (0, -2.6) {4};
  \node at (2.2, -1) {5};
  \node at (2.2, 1) {6};
\end{tikzpicture}
&\raisebox{0pt}[0.8cm][0.8cm]{$\{1,2, 3, 5\}$} 
& \raisebox{0pt}[0.8cm][0.8cm]{$\{1, 2, 3, 5\}$} \\
\hline

\raisebox{0pt}[0.8cm][0.8cm]{6}
&\begin{tikzpicture}[baseline=-0.5ex, scale=0.5]
  \node[draw, circle, inner sep=1.2pt, fill=black] (x1) at (0, 2) {};
  \node[draw, circle, inner sep=1.2pt, fill=black] (x2) at (-1.73, 1) {};
  \node[draw, circle, inner sep=1.2pt, fill=black] (x3) at (-1.73, -1) {};
  \node[draw, circle, inner sep=1.2pt, fill=black] (x4) at (0, -2) {};
  \node[draw, circle, inner sep=1.2pt, fill=black] (x5) at (1.73, -1) {};
  \node[draw, circle, inner sep=1.2pt, fill=black] (x6) at (1.73, 1) {};
  
  \foreach \i/\j in {1/2, 1/3, 1/5, 1/6, 2/3, 3/4, 3/5, 4/5, 4/6, 5/6} {
    \draw (x\i) -- (x\j);
  }
  \node at (0, 2.6) {1};
  \node at (-2.2, 1) {2};
  \node at (-2.2, -1) {3};
  \node at (0, -2.6) {4};
  \node at (2.2, -1) {5};
  \node at (2.2, 1) {6};
\end{tikzpicture}
&\raisebox{0pt}[0.8cm][0.8cm]{$\{1, 3, 5\}$} 
& \raisebox{0pt}[0.8cm][0.8cm]{$\{1, 3, 5\}$} \\
\hline

\raisebox{0pt}[0.8cm][0.8cm]{6}
&\begin{tikzpicture}[baseline=-0.5ex, scale=0.5]
  \node[draw, circle, inner sep=1.2pt, fill=black] (x1) at (0, 2) {};
  \node[draw, circle, inner sep=1.2pt, fill=black] (x2) at (-1.73, 1) {};
  \node[draw, circle, inner sep=1.2pt, fill=black] (x3) at (-1.73, -1) {};
  \node[draw, circle, inner sep=1.2pt, fill=black] (x4) at (0, -2) {};
  \node[draw, circle, inner sep=1.2pt, fill=black] (x5) at (1.73, -1) {};
  \node[draw, circle, inner sep=1.2pt, fill=black] (x6) at (1.73, 1) {};
  
  \foreach \i/\j in {1/2, 1/6, 2/3, 2/5, 2/6,  3/4, 3/5, 3/6, 4/5, 5/6} {
    \draw (x\i) -- (x\j);
  }
  \node at (0, 2.6) {1};
  \node at (-2.2, 1) {2};
  \node at (-2.2, -1) {3};
  \node at (0, -2.6) {4};
  \node at (2.2, -1) {5};
  \node at (2.2, 1) {6};
\end{tikzpicture}
&\raisebox{0pt}[0.8cm][0.8cm]{$\{2, 3, 5, 6\}$} 
& \raisebox{0pt}[0.8cm][0.8cm]{$\{2, 3, 5, 6\}$} \\
\hline

\raisebox{0pt}[0.8cm][0.8cm]{6}
&\begin{tikzpicture}[baseline=-0.5ex, scale=0.5]
  \node[draw, circle, inner sep=1.2pt, fill=black] (x1) at (0, 2) {};
  \node[draw, circle, inner sep=1.2pt, fill=black] (x2) at (-1.73, 1) {};
  \node[draw, circle, inner sep=1.2pt, fill=black] (x3) at (-1.73, -1) {};
  \node[draw, circle, inner sep=1.2pt, fill=black] (x4) at (0, -2) {};
  \node[draw, circle, inner sep=1.2pt, fill=black] (x5) at (1.73, -1) {};
  \node[draw, circle, inner sep=1.2pt, fill=black] (x6) at (1.73, 1) {};
  
  \foreach \i/\j in {1/2, 1/3, 1/4, 1/5, 1/6, 2/3, 2/4, 2/5, 2/6, 3/4, 3/5, 3/6, 4/5, 4/6, 5/6} {
    \draw (x\i) -- (x\j);
  }
  \node at (0, 2.6) {1};
  \node at (-2.2, 1) {2};
  \node at (-2.2, -1) {3};
  \node at (0, -2.6) {4};
  \node at (2.2, -1) {5};
  \node at (2.2, 1) {6};
\end{tikzpicture}
&\raisebox{0pt}[0.8cm][0.8cm]{$\{1,2, 3, 4, 5, 6\}$} 
& \raisebox{0pt}[0.8cm][0.8cm]{$\{1, 2, 3, 4, 5, 6\}$} \\
\hline

\raisebox{0.5cm}[0.8cm][0.8cm]{4}
& \begin{tikzpicture}[baseline=-0.5ex, scale=1.0]
  \node[draw, circle, inner sep=1.2pt, fill=black] (a) at (0,0) {};
  \node[draw, circle, inner sep=1.2pt, fill=black] (b) at (1.5,0) {};
  \node[draw, circle, inner sep=1.2pt, fill=black] (c) at (1.5,1.2) {};
  \node[draw, circle, inner sep=1.2pt, fill=black] (d) at (0,1.2) {};
  \draw (a) -- (b);
  \draw (a) -- (d);
  \draw (b) -- (c);
  \draw (c) -- (d);
  \node[below=2pt of a] {1};
  \node[below=2pt of b] {2};
  \node[above=2pt of c] {3};
  \node[above=2pt of d] {4};
\end{tikzpicture}
& \raisebox{0.5cm}[0.8cm][0.8cm]{$\emptyset$} 
& \raisebox{0.5cm}[0.8cm][0.8cm]{$\emptyset$} \\
\hline

\raisebox{0pt}[0.8cm][0.8cm]{5}
& \begin{tikzpicture}[baseline=-0.5ex, scale=0.9]
  \def\r{1.0}
  \foreach \i/\name in {1/a,2/b,3/c,4/d,5/e} {
    \node[draw, circle, inner sep=1.2pt, fill=black] (\name) at ({90 + (\i - 1)*72}:\r) {};
  }
  \foreach \i/\j in {a/b, a/d, b/c, c/d, d/e, e/a, b/e} {
    \draw (\i) -- (\j);
  }
  \foreach \i/\name in {1/a,2/b,3/c,4/d,5/e} {
    \node at ($(0,0)!1.25!(\name)$) {\i};
  }
\end{tikzpicture}
& \raisebox{0pt}[0.8cm][0.8cm]{$\{1, 5\}$} 
& \raisebox{0pt}[0.8cm][0.8cm]{$\emptyset$} \\
\hline

\raisebox{0pt}[0.8cm][0.8cm]{6}
&\begin{tikzpicture}[baseline=-0.5ex, scale=0.5]
  \node[draw, circle, inner sep=1.2pt, fill=black] (x1) at (0, 2) {};
  \node[draw, circle, inner sep=1.2pt, fill=black] (x2) at (-1.73, 1) {};
  \node[draw, circle, inner sep=1.2pt, fill=black] (x3) at (-1.73, -1) {};
  \node[draw, circle, inner sep=1.2pt, fill=black] (x4) at (0, -2) {};
  \node[draw, circle, inner sep=1.2pt, fill=black] (x5) at (1.73, -1) {};
  \node[draw, circle, inner sep=1.2pt, fill=black] (x6) at (1.73, 1) {};
  
  \foreach \i/\j in {1/3, 1/6, 2/3, 2/6, 3/5, 4/5, 5/6} {
    \draw (x\i) -- (x\j);
  }
  \node at (0, 2.6) {1};
  \node at (-2.2, 1) {2};
  \node at (-2.2, -1) {3};
  \node at (0, -2.6) {4};
  \node at (2.2, -1) {5};
  \node at (2.2, 1) {6};
\end{tikzpicture}
&\raisebox{0pt}[0.8cm][0.8cm]{$\{5\}$} 
& \raisebox{0pt}[0.8cm][0.8cm]{$\emptyset$} \\
\hline

\raisebox{0pt}[0.8cm][0.8cm]{6}
&\begin{tikzpicture}[baseline=-0.5ex, scale=0.5]
  \node[draw, circle, inner sep=1.2pt, fill=black] (x1) at (0, 2) {};
  \node[draw, circle, inner sep=1.2pt, fill=black] (x2) at (-1.73, 1) {};
  \node[draw, circle, inner sep=1.2pt, fill=black] (x3) at (-1.73, -1) {};
  \node[draw, circle, inner sep=1.2pt, fill=black] (x4) at (0, -2) {};
  \node[draw, circle, inner sep=1.2pt, fill=black] (x5) at (1.73, -1) {};
  \node[draw, circle, inner sep=1.2pt, fill=black] (x6) at (1.73, 1) {};
  
  \foreach \i/\j in {1/2, 1/3, 1/5, 1/6, 2/3, 2/4, 3/4, 4/5, 4/6, 5/6} {
    \draw (x\i) -- (x\j);
  }
  \node at (0, 2.6) {1};
  \node at (-2.2, 1) {2};
  \node at (-2.2, -1) {3};
  \node at (0, -2.6) {4};
  \node at (2.2, -1) {5};
  \node at (2.2, 1) {6};
\end{tikzpicture}
&\raisebox{0pt}[0.8cm][0.8cm]{$\{2, 3, 5, 6\}$} 
& \raisebox{0pt}[0.8cm][0.8cm]{$\emptyset$} \\
\hline

\raisebox{0pt}[0.8cm][0.8cm]{6}
&\begin{tikzpicture}[baseline=-0.5ex, scale=0.5]
  \node[draw, circle, inner sep=1.2pt, fill=black] (x1) at (0, 2) {};
  \node[draw, circle, inner sep=1.2pt, fill=black] (x2) at (-1.73, 1) {};
  \node[draw, circle, inner sep=1.2pt, fill=black] (x3) at (-1.73, -1) {};
  \node[draw, circle, inner sep=1.2pt, fill=black] (x4) at (0, -2) {};
  \node[draw, circle, inner sep=1.2pt, fill=black] (x5) at (1.73, -1) {};
  \node[draw, circle, inner sep=1.2pt, fill=black] (x6) at (1.73, 1) {};
  
  \foreach \i/\j in {1/2, 1/4, 1/6, 2/3, 2/5, 2/6, 3/4, 3/5, 3/6, 4/5, 5/6} {
    \draw (x\i) -- (x\j);
  }
  \node at (0, 2.6) {1};
  \node at (-2.2, 1) {2};
  \node at (-2.2, -1) {3};
  \node at (0, -2.6) {4};
  \node at (2.2, -1) {5};
  \node at (2.2, 1) {6};
\end{tikzpicture}
&\raisebox{0pt}[0.8cm][0.8cm]{$\{2, 3, 5, 6\}$} 
& \raisebox{0pt}[0.8cm][0.8cm]{$\emptyset$} \\
\hline

\raisebox{0pt}[0.8cm][0.8cm]{6}
&\begin{tikzpicture}[baseline=-0.5ex, scale=0.5]
  \node[draw, circle, inner sep=1.2pt, fill=black] (x1) at (0, 2) {};
  \node[draw, circle, inner sep=1.2pt, fill=black] (x2) at (-1.73, 1) {};
  \node[draw, circle, inner sep=1.2pt, fill=black] (x3) at (-1.73, -1) {};
  \node[draw, circle, inner sep=1.2pt, fill=black] (x4) at (0, -2) {};
  \node[draw, circle, inner sep=1.2pt, fill=black] (x5) at (1.73, -1) {};
  \node[draw, circle, inner sep=1.2pt, fill=black] (x6) at (1.73, 1) {};
  
  \foreach \i/\j in {1/2, 1/3, 1/5, 1/6, 2/3, 2/4, 2/5, 3/4, 4/5, 4/6, 5/6} {
    \draw (x\i) -- (x\j);
  }
  \node at (0, 2.6) {1};
  \node at (-2.2, 1) {2};
  \node at (-2.2, -1) {3};
  \node at (0, -2.6) {4};
  \node at (2.2, -1) {5};
  \node at (2.2, 1) {6};
\end{tikzpicture}
&\raisebox{0pt}[0.8cm][0.8cm]{$\{1, 2, 5\}$} 
& \raisebox{0pt}[0.8cm][0.8cm]{$\emptyset$} \\
\hline

\raisebox{0pt}[0.8cm][0.8cm]{6}
&\begin{tikzpicture}[baseline=-0.5ex, scale=0.5]
  \node[draw, circle, inner sep=1.2pt, fill=black] (x1) at (0, 2) {};
  \node[draw, circle, inner sep=1.2pt, fill=black] (x2) at (-1.73, 1) {};
  \node[draw, circle, inner sep=1.2pt, fill=black] (x3) at (-1.73, -1) {};
  \node[draw, circle, inner sep=1.2pt, fill=black] (x4) at (0, -2) {};
  \node[draw, circle, inner sep=1.2pt, fill=black] (x5) at (1.73, -1) {};
  \node[draw, circle, inner sep=1.2pt, fill=black] (x6) at (1.73, 1) {};
  
  \foreach \i/\j in {1/2, 1/3, 1/5, 1/6, 2/4, 3/4, 3/5, 3/6, 4/5, 4/6, 5/6} {
    \draw (x\i) -- (x\j);
  }
  \node at (0, 2.6) {1};
  \node at (-2.2, 1) {2};
  \node at (-2.2, -1) {3};
  \node at (0, -2.6) {4};
  \node at (2.2, -1) {5};
  \node at (2.2, 1) {6};
\end{tikzpicture}
&\raisebox{0pt}[0.8cm][0.8cm]{$\{3, 5, 6\}$} 
& \raisebox{0pt}[0.8cm][0.8cm]{$\emptyset$} \\
\hline

\raisebox{0pt}[0.8cm][0.8cm]{6}
&\begin{tikzpicture}[baseline=-0.5ex, scale=0.5]
  \node[draw, circle, inner sep=1.2pt, fill=black] (x1) at (0, 2) {};
  \node[draw, circle, inner sep=1.2pt, fill=black] (x2) at (-1.73, 1) {};
  \node[draw, circle, inner sep=1.2pt, fill=black] (x3) at (-1.73, -1) {};
  \node[draw, circle, inner sep=1.2pt, fill=black] (x4) at (0, -2) {};
  \node[draw, circle, inner sep=1.2pt, fill=black] (x5) at (1.73, -1) {};
  \node[draw, circle, inner sep=1.2pt, fill=black] (x6) at (1.73, 1) {};
  
  \foreach \i/\j in {1/2, 1/3, 1/5, 1/6, 2/3, 2/4, 2/6, 3/4, 3/5, 4/5, 4/6, 5/6} {
    \draw (x\i) -- (x\j);
  }
  \node at (0, 2.6) {1};
  \node at (-2.2, 1) {2};
  \node at (-2.2, -1) {3};
  \node at (0, -2.6) {4};
  \node at (2.2, -1) {5};
  \node at (2.2, 1) {6};
\end{tikzpicture}
&\raisebox{0pt}[0.8cm][0.8cm]{$\emptyset$} 
&\raisebox{0pt}[0.8cm][0.8cm]{$\emptyset$} \\
\hline

\end{longtable}
\end{center}

By the lists of $U(G)$ and ${\rm Shed}(G)$, the following statement is easily verified:

\begin{rem}\label{ver-CM-S_{2}}
For a  well-covered graph $G$ with $|V(G)|\leq 6$, the following conditions are equivalent: 
\begin{enumerate}
\item $G$ is vertex decomposable. 
\item $G$ is Cohen--Macaulay.  
\item $G$ satisfies Serre's condition $(S_{2})$. 
\end{enumerate}
\end{rem}

\begin{rem}\label{U neq Shed}
We consider the following two graphs:
\begin{center}
\begin{minipage}[t]{0.3\linewidth}
  \centering
  \begin{tikzpicture}[scale=0.5]
    \node[draw, circle, inner sep=1.2pt, fill=black] (x1) at (0, 2) {};
    \node[draw, circle, inner sep=1.2pt, fill=black] (x2) at (-1.73, 1) {};
    \node[draw, circle, inner sep=1.2pt, fill=black] (x3) at (-1.73, -1) {};
    \node[draw, circle, inner sep=1.2pt, fill=black] (x4) at (0, -2) {};
    \node[draw, circle, inner sep=1.2pt, fill=black] (x5) at (1.73, -1) {};
    \node[draw, circle, inner sep=1.2pt, fill=black] (x6) at (1.73, 1) {};

    \foreach \i/\j in {1/2, 1/3, 1/6, 2/3, 2/4, 3/4, 3/5, 4/5, 4/6, 5/6} {
      \draw (x\i) -- (x\j);
    }

    \node at (0, 2.6) {1};
    \node at (-2.2, 1) {2};
    \node at (-2.2, -1) {3};
    \node at (0, -2.6) {4};
    \node at (2.2, -1) {5};
    \node at (2.2, 1) {6};
  \end{tikzpicture}
  \captionof{figure}{}
\end{minipage}
\hspace{0.1cm}
\begin{minipage}[t]{0.3\linewidth}
  \centering
  \begin{tikzpicture}[scale=0.5]
    \node[draw, circle, inner sep=1.2pt, fill=black] (x1) at (0, 2) {};
    \node[draw, circle, inner sep=1.2pt, fill=black] (x2) at (-1.73, 1) {};
    \node[draw, circle, inner sep=1.2pt, fill=black] (x3) at (-1.73, -1) {};
    \node[draw, circle, inner sep=1.2pt, fill=black] (x4) at (0, -2) {};
    \node[draw, circle, inner sep=1.2pt, fill=black] (x5) at (1.73, -1) {};
    \node[draw, circle, inner sep=1.2pt, fill=black] (x6) at (1.73, 1) {};

    \foreach \i/\j in {1/2, 1/3, 1/4, 1/6, 2/3, 2/4, 3/4, 3/6, 4/5, 5/6} {
      \draw (x\i) -- (x\j);
    }

    \node at (0, 2.6) {1};
    \node at (-2.2, 1) {2};
    \node at (-2.2, -1) {3};
    \node at (0, -2.6) {4};
    \node at (2.2, -1) {5};
    \node at (2.2, 1) {6};
  \end{tikzpicture}
  \captionof{figure}{}
\end{minipage}
\end{center}
Let $G_{1}$ denote the graph shown in Figure 1, and $G_{2}$ denote the graph shown in Figure 2. $G_{1}$ and $G_{2}$ are only Cohen--Macaulay graphs with at most 6 vertices such that $U(G)\neq {\rm Shed}(G)$. Now, for each $i=1,2$, the graph $G_{i} - 2$ is not vertex decomposable, whereas $G_{i} - N[2]$ is vertex decomposable. By symmetry, the same holds when considering vertex 5 instead of 2 in $G_{1}$.
\end{rem}

Under certain conditions, we provide a necessary and sufficient condition for $G_{0}(\mathcal{H})$ to be Cohen–Macaulay for every graph $G_{0}$ on the vertex set $X_{[n]}$. 

\begin{thm}\label{eqiv between CM and vd}
Suppose that $|V(H_{i})|\leq 6$ and assume that the condition $(\ast)$, then the following conditions are equivalent: 
\begin{enumerate}
\item $H_{i}$ is 2-pure with respect to $x_{i}$ and $H_{i}$ is vertex decomposable such that $x_{i}$ is a shedding vertex for all $i$. 
\item $G_{0}(\mathcal{H})$ is pure vertex decomposable for every graph $G_{0}$ on  $X_{[m]}$. 
\item $G_{0}(\mathcal{H})$ is Cohen--Macaulay for every graph $G_{0}$ on $X_{[m]}$. 
\item $G_{0}(\mathcal{H})$ satisfies Serre's condition $(S_{2})$ for every graph $G_{0}$ on $X_{[m]}$.  
\end{enumerate}
\end{thm}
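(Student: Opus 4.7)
The plan is to prove the cycle (1) $\Rightarrow$ (2) $\Rightarrow$ (3) $\Rightarrow$ (4) $\Rightarrow$ (1). The equivalence (1) $\Leftrightarrow$ (2) requires no new work, since it is exactly Proposition \ref{vertex decomp}. The implications (2) $\Rightarrow$ (3) and (3) $\Rightarrow$ (4) are the standard chain: pure vertex decomposable is pure shellable and hence Cohen--Macaulay, and Cohen--Macaulay always implies Serre's condition $(S_2)$. All of the novelty is therefore concentrated in (4) $\Rightarrow$ (1), which I would prove in three steps.

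First, assuming (4), I would apply Theorem \ref{Serre} with $r = 2$ to conclude that each $H_i$ satisfies 2-Serre's condition $(S_2)$ with respect to $x_i$. Since any squarefree Stanley--Reisner ring satisfying $(S_2)$ is equidimensional, this automatically upgrades to 2-pureness of $H_i$ with respect to $x_i$, settling the 2-pure half of (1). The remaining task is to promote these conclusions to vertex decomposability with $x_i$ as a shedding vertex.

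Second, to apply the six-vertex classification I would produce three well-covered graphs on at most six vertices satisfying $(S_2)$ and invoke Remark \ref{ver-CM-S_{2}} on each: $H_i$, $H_i - x_i$, and $H_i - N_{H_i}[x_i]$. The first two inherit well-coveredness and $(S_2)$ directly from 2-Serre's condition with respect to $x_i$. For the third, whose independence complex equals ${\rm link}_{\Delta(H_i)}(\{x_i\})$, property $(S_2)$ is preserved under taking links by \cite[Lemma 2.2]{htyn}, and well-coveredness follows from the bijection $A \mapsto A \setminus \{x_i\}$ between maximal independent sets of $H_i$ containing $x_i$ and maximal independent sets of $H_i - N_{H_i}[x_i]$, together with the dimension equality $\alpha(H_i) = \alpha(H_i - x_i)$ supplied by 2-pureness. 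Remark \ref{ver-CM-S_{2}} then yields vertex decomposability of $\Delta(H_i)$, of ${\rm del}_{\Delta(H_i)}(\{x_i\})$, and of ${\rm link}_{\Delta(H_i)}(\{x_i\})$.

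Third, I would verify the facet condition in the definition of shedding vertex: every facet of $\Delta(H_i - x_i)$ is a facet of $\Delta(H_i)$. This is a one-line count, since any such facet $A$ satisfies $|A| = \alpha(H_i - x_i) = \alpha(H_i)$ and is independent in $H_i$, so is maximal there by purity of $H_i$. Combined with the second step, this shows that $x_i$ is a shedding vertex of $H_i$ and that $H_i$ itself is vertex decomposable, yielding (1). The main technical obstacle is the well-coveredness of $H_i - N_{H_i}[x_i]$: without it, Remark \ref{ver-CM-S_{2}} cannot be invoked for the link, and the passage from $(S_2)$ to vertex decomposability collapses. The exceptional graphs displayed in Remark \ref{U neq Shed} illustrate precisely why this step cannot be skipped, and they are exactly the obstructions that 2-Serre's condition $(S_2)$ is designed to rule out.
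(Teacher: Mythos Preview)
Your proof is correct, and for the implication $(4)\Rightarrow(1)$ it takes a cleaner route than the paper's. The paper does not invoke Theorem~\ref{Serre}; instead it takes $G_0$ to be the empty graph to conclude that each $H_i$ satisfies $(S_2)$, applies Remark~\ref{ver-CM-S_{2}} once to get vertex decomposability of $H_i$, and then argues by contradiction that $x_i$ must be a shedding vertex via a two-case analysis (all $x_k$ non-shedding versus some $x_j$ shedding), choosing $G_0$ to be the complete graph in Case~1 and a single-edge graph in Case~2, and finishing with the explicit exceptional-graph check from Remark~\ref{U neq Shed}. Your argument bypasses all of this: invoking Theorem~\ref{Serre} immediately gives $(S_2)$ for $H_i$, $H_i-x_i$, and (via links) $H_i-N_{H_i}[x_i]$, so Remark~\ref{ver-CM-S_{2}} can be applied three times rather than once, and the shedding-vertex condition then reduces to the one-line facet count you give. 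The payoff is that you avoid both the case split and any appeal to Remark~\ref{U neq Shed}; the paper's approach, by contrast, is more self-contained in the sense that it does not lean on the full strength of Theorem~\ref{Serre}. One minor redundancy: your bijection argument for the well-coveredness of $H_i-N_{H_i}[x_i]$ is not needed, since $(S_2)$ for a Stanley--Reisner ring already forces purity (this is the same fact, from \cite[Lemma~2.6]{mt}, that the paper uses in the proof of Lemma~\ref{lemma1}); but the extra argument does no harm.
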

\begin{proof}
$(1)\Rightarrow(2)$ follows from Proposition \ref{vertex decomp}. Also $(2)\Rightarrow (3)$ follows from the fact that pure vertex decomposable implies Cohen--Macaulay. $(3)\Rightarrow(4)$ always follows. 
So we must prove that the condition $(4)$ implies the condition $(1)$. We consider $G_{0}$ as the empty graph, that is, $G_{0}$ has no edges. Then we have$$\Bbbk[V(G_{0}(\mathcal{H}))]/I(G_{0}(\mathcal{H}))\simeq\Bbbk[V(H_{1})]/I(H_{1})\otimes_{\Bbbk}\cdots\otimes_{\Bbbk}\Bbbk[V(H_{n})]/I(H_{n}).$$Therefore, $\Bbbk[V(H_{i})]/I(H_{i})$ satisfies Serre's condition $(S_{2})$ for all $i$ by Lemma \ref{lemma2}. Now, since $|V(H_i)| \leq 6$, it follows that each $H_{i}$ is vertex decomposable from Remark \ref{ver-CM-S_{2}}. Hence, it suffices to prove that $x_{i}$ is a shedding vertex of $H_{i}$ for all $i$. To this end, assume for contradiction that there exists some $i$ such that $x_{i}$ is not a shedding vertex of $H_{i}$. We distinguish the following cases:

\vspace{0.1cm}
{\it Case 1}: $x_{k}$ is not a shedding vertex of $H_{k}$ for all $k$. 

\vspace{0.1cm} In this case, we consider a graph $G_{0}$ as a complete graph. Then the graph $G_{0}(\mathcal{H}) - N[x_{i}]$ consists of connected components of the form $H_{k} - x_{k}$ for all $k\neq i$, and $H_{i} - N[x_{i}]$. Since $x_{k}$ is not a shedding vertex, none of the connected components is vertex decomposable. Given that $|V(H_{k})| \leq 6$, we conclude that $H_{k}-x_{k}$ and $H_{i}-N[x_{i}]$ do not satisfy Serre's condition $(S_{2})$. Therefore, by the above isomorphism, $G_{0}(\mathcal{H})$ does not satisfy Serre's condition $(S_{2})$, which is a contradiction. 

\vspace{0.1cm}
{\it Case 2}: There exists $j$ such that $x_{j}$ is a shedding vertex of $H_{j}$. 

\vspace{0.1cm} In this case, we consider a graph $G_{0}$ such that $\{x_{i}, x_{j}\}$ is an  only edge of $G_{0}$. Then, from the proof of Proposition \ref{vertex decomp}, we deduce that $G_{0}(\mathcal{H})- x_{j}$ is also vertex decomposable, that is, $x_{j}$ is a shedding vertex of $G_{0}(\mathcal{H})$. Therefore, by the same argument as above, we have $H_{i}-x_{i}$ is vertex decomposable. From $|V(H_{i})|\leq 6$ and Remark \ref{U neq Shed}, it is a contradiction, as required. 
\end{proof}

%%%%%%%%%%%%%%%%%%%%%%%%%%%%%%%%%%%%%%%%%%%%%%%%%%%%%%%%%%%%%%%%%%%%%%%%%%%%%%%%%%%%%%%%%

\section{The size of Betti tables of edge ideals of multi-clique corona graphs}
In this section, we define multi-clique corona graphs as a generalization of clique corona graphs and multi-whisker graphs. We establish a relationship between multi-clique corona graphs and multi-whisker graphs from an algebraic perspective. This gives us the characterization of the regularity of edge ideals of multi-clique corona graphs. Also, we prove that multi-clique corona graphs are sequentially Cohen--Macaulay. This result leads us to the characterization of the projective dimension of multi-clique corona graphs. Let us define the multi-clique corona graphs. 

\begin{defi}
For a graph $G$ on the vertex set $X_{[h]}$ and positive integers $n_{1},\ldots, n_{h}$, we set 
$$\mathcal{H}=\bigcup_{1\leq i\leq h}\{H_{i}^{1},\ldots, H_{i}^{n_{i}}\},$$
where $H_{i}^{j}$ is a nonempty graph indexed by the vertex $x_{i}$. 
we define the {\it multi-corona graph} $G\circ\mathcal{H}$ of $G$ and $\mathcal{H}$ is the disjoint union of $G$ and $H_{i}^{1},\ldots, H_{i}^{n_{i}}$, with additional edges joining each vertex $x_{i}$ to all vertices $H_{i}^{j}$. 
We call $G\circ\mathcal{H}$ is {\it multi-clique corona graph}, if $H_{i}^{j}$ is the complete graph for all $i$ and $j$. 
\end{defi}

We establish a relationship between multi-clique corona graphs and multi-whisker graphs. To this end, we recall the definition of multi-whisker graphs, which was first introduced by Pournaki and the authors in this paper in the context of combinatorial commutative algebra \cite[Section 1]{mpt}.  

\begin{defi}\cite[Section 1]{mpt}\label{def of multi}
Let $G_{0}$ be a graph on the vertex set $V(G_{0})=X_{[r]}=\{x_{1},\ldots, x_{r}\}$ and $n_{1},\ldots, n_{r}$ be positive integers. Then the {\it multi-whisker graph} associated with $G_{0}$ is the graph $G$ on the vertex set 
$$V(G) = X_{[r]} \cup Y, \mbox{ where } Y = \{y_ {1,1}, \ldots, y_ {1, n_r} 
\} \cup \cdots \cup \{y_ {r,1}, \ldots, y_ {r, n_r}\}$$ 
and the edge set 
$$ E(G) = E(G_{0}) \cup \{x_1 y_{1,1}, \ldots, x_1 y_ {1, n_1}, \ldots, x_r y_{r,1}, \ldots, x_r y_{r, n_r} \}.$$
\end{defi}

We prepare several notations. 
\begin{nota}\label{split2}
Let $G$ be a graph on the vertex set $X_{[h]}$ and positive integers $n_{1},\ldots, n_{h}$ and let $\mathcal{H}=\{H_{i}^{1},\ldots, H_{i}^{n_{i}}\,\,: x_{i}\in V(G)\}$. Assume that $G\circ\mathcal{H}$ is a multi-clique corona graph. Let $H_{i}^{j}=K_{m_{i,j}}$, where $K_{m_{i,j}}$ is the complete graph with $m_{i,j}$ vertices. Set $V(H_{i}^{j})=\{y_{m_{i,j}, 1},\ldots, y_{m_{i,j}, m_{i,j}}\}$. Also, we set the following new vertex set 
$$Z=\bigcup_{1\leq i\leq h}\{z_{m_{i,1},0},\ldots, z_{m_{i,1},m_{i,1}-1},\ldots, z_{m_{i,n_{i}},0},\ldots, z_{m_{i, n_{i}}, m_{i, n_{i}}-1}\}$$
We consider the graph $G^{\prime}$ such that 
$$V(G^{\prime})=(V(G)-\{y_{m_{i,1},m_{i,1}},\ldots, y_{m_{h,n_{h}},m_{h,n_{h}}}\})\cup Z,$$ and 
$$E(G^{\prime})=E(G-\{y_{m_{i,1},m_{i,1}},\ldots, y_{m_{h,n_{h}},m_{h,n_{h}}}\})\cup\left(\bigcup_{1\leq i\leq h}\{\{y_{m_{i,j},k}, z_{m_{i,j},k}\}, \{x_{i}, z_{m_{i, j}, 0}\}\}
\right).$$
Notice that the graph $G^{\prime}$ is the multi-whisker graphs. 
Also, we let $S=\Bbbk[V(G\circ\mathcal{H})]$ and let $S^{\prime}=\Bbbk[V(G^{\prime})].$ Then the edge ideal $I(G\circ\mathcal{H})$ is an ideal of $S$ and $I(G^{\prime})$ is an ideal of $S^{\prime}$. 
\end{nota}

\begin{ex}
Let $G=P_{2}$ be the path on $\{x_{1}, x_{2}\}$. We consider $\mathcal{H}$ as $H_{1}^{1}=K_{2}, H_{1}^{2}=K_{3}, H_{2}^{1}=K_{1}$ and $H_{2}^{2}=K_{2}$. Then the graph $G\circ\mathcal{H}$ and $G^{\prime}$ provided in Construction \ref{split2} are as follows: 

\begin{center}
\begin{tikzpicture}[scale=0.48, every node/.style={scale=0.9}]
\node at (-4.2,1.2) {$G\circ\mathcal{H}=$};

\coordinate (X1) at (0,0);
\coordinate (X2) at (6,0);
\fill (X1) circle (3pt) node[below] {$x_{1}$};
\fill (X2) circle (3pt) node[below] {$x_{2}$};
\draw[semithick] (X1)--(X2);

\coordinate (A1) at (-2.2,2.6);  
\coordinate (A2) at (-0.8,3.9);  
\fill (A1) circle (3pt) node[above left] {$y_{1_{1},1}$};
\fill (A2) circle (3pt) node[above right] {$y_{1_{1},2}$};
\draw[semithick] (A1)--(A2);
\draw[semithick] (X1)--(A1);
\draw[semithick] (X1)--(A2);

\coordinate (B1) at (2.1,2.2);   
\coordinate (B2) at (4.2,2.3);   
\coordinate (B3) at (2.7,4.4);   
\fill (B1) circle (3pt) node[above right]  {$y_{1_{2},1}$};
\fill (B2) circle (3pt) node[above right] {$y_{1_{2},2}$};
\fill (B3) circle (3pt) node[above]       {$y_{1_{2},3}$};
\draw[semithick] (B1)--(B2)--(B3)--(B1);
\draw[semithick] (X1)--(B1);
\draw[semithick] (X1)--(B2);
\draw[semithick] (X1)--(B3);

\coordinate (C1) at (5.1,2.1);   
\fill (C1) circle (3pt) node[right] {$y_{2_{1},1}$};
\draw[semithick] (X2)--(C1);

\coordinate (D1) at (7.4,2.6);   
\coordinate (D2) at (8.8,2.6);   
\fill (D1) circle (3pt) node[above] {$y_{2_{2},1}$};
\fill (D2) circle (3pt) node[above right] {$y_{2_{2},2}$};
\draw[semithick] (D1)--(D2);
\draw[semithick] (X2)--(D1);
\draw[semithick] (X2)--(D2);

\begin{scope}[shift={(-2,0)}]
\node at (14,1.2) {$G^{\prime}=$};

\coordinate (X1R) at (17,0);
\coordinate (X2R) at (23,0);
\fill (X1R) circle (3pt) node[below] {$x_{1}$};
\fill (X2R) circle (3pt) node[below] {$x_{2}$};
\draw[semithick] (X1R)--(X2R);

\coordinate (A1R)   at (14.9,2.6);   
\coordinate (AZone)  at (16,3.6);   
\coordinate (AZzero) at (16.7,1.3);   
\fill (A1R) circle (3pt) node[above left] {$y_{1_{1},1}$};
\fill (AZone) circle (3pt) node[above left] {$z_{1_{1},1}$};
\fill (AZzero) circle (3pt) node[above] {$z_{1_{1},0}$};
\draw[semithick] (X1R)--(A1R);
\draw[semithick] (A1R)--(AZone);
\draw[semithick] (X1R)--(AZzero);

\coordinate (B1R)   at (19.0,2.3);   
\coordinate (B2R)   at (20.6,2.9);   
\coordinate (BZone) at (19.3,4.0);   
\coordinate (BZtwo) at (20.75,4.1);   
\coordinate (BZzero)at (19.5,1);  
\fill (B1R) circle (3pt) node[above left]  {$y_{1_{2},1}$};
\fill (B2R) circle (3pt) node[right] {$y_{1_{2},2}$};
\fill (BZone) circle (3pt) node[above] {$z_{1_{2},1}$};
\fill (BZtwo) circle (3pt) node[above] {$z_{1_{2},2}$};
\fill (BZzero) circle (3pt) node[below] {$z_{1_{2},0}$};
\draw[semithick] (B1R)--(B2R);
\draw[semithick] (X1R)--(B1R);
\draw[semithick] (X1R)--(B2R);
\draw[semithick] (B1R)--(BZone);
\draw[semithick] (B2R)--(BZtwo);
\draw[semithick] (X1R)--(BZzero);

\coordinate (CZzero) at (22.2,1.3);  
\fill (CZzero) circle (3pt) node[left] {$z_{2_{1},0}$};
\draw[semithick] (X2R)--(CZzero);

\coordinate (D1R)   at (24,2.6);   
\coordinate (DZone)  at (25.6,3.4);  
\coordinate (DZzero) at (24.0,1.2);  
\fill (D1R) circle (3pt) node[above] {$y_{2_{2},1}$};
\fill (DZone) circle (3pt) node[above] {$z_{2_{2},1}$};
\fill (DZzero) circle (3pt) node[right] {$z_{2_{2},0}$};
\draw[semithick] (X2R)--(D1R);
\draw[semithick] (D1R)--(DZone);
\draw[semithick] (X2R)--(DZzero);
\end{scope}
\end{tikzpicture}
\end{center}
\end{ex}

The following theorem gives us the graded Betti numbers of $I(G\circ\mathcal{H})$ via $I(G^{\prime})$. 

\begin{thm}\label{betti}
Let $G$ be a graph on the vertex set $X_{[h]}$ and positive integers $n_{1},\ldots, n_{h}$ and let $\mathcal{H}=\bigcup_{1\leq i\leq h}\{H_{i}^{1},\ldots, H_{i}^{n_{i}}\}$. Assume that $G\circ\mathcal{H}$ is the multi-clique corona graph and the condition in Notation \ref{split2}. Then, we have 
$$\beta_{p,q}(S/I(G\circ\mathcal{H}))=\beta_{p,q}(S^{\prime}/I(G^{\prime}))\mbox{ for all }p, q.$$
\end{thm}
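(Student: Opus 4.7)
The plan is to recognize the passage from $G \circ \mathcal{H}$ to $G'$ as an iterated \emph{edge-splitting} of simplicial vertices, and to prove that edge-splitting a simplicial vertex preserves the graded Betti numbers of the edge ideal. In $G \circ \mathcal{H}$ each vertex $y_{m_{i,j}, m_{i,j}}$ is simplicial: its open neighborhood $\{x_i\} \cup \{y_{m_{i,j}, k} : 1 \le k < m_{i,j}\}$ spans a clique, because $x_i$ is adjacent to every vertex of $H_i^j = K_{m_{i,j}}$ and the $y$'s form the clique $K_{m_{i,j}}$. The graph $G'$ is obtained by removing each such $y_{m_{i,j}, m_{i,j}}$ and replacing it with $m_{i,j}$ new pendants $z_{m_{i,j},0},\ldots,z_{m_{i,j},m_{i,j}-1}$, one attached to each former neighbor. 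Since the neighborhoods of $y_{m_{i,j},m_{i,j}}$ in distinct cliques are disjoint, and the clique structure on each neighborhood is undisturbed by edge-splittings performed in other cliques, the full transformation can be carried out one clique at a time. Thus the theorem reduces to the local statement: \emph{if $v$ is a simplicial vertex of a graph $H$ with $N_H(v) = \{u_1,\ldots,u_d\}$ and $H^*$ is obtained from $H$ by deleting $v$ and adjoining pendants $v_1,\ldots,v_d$ with edges $v_i u_i$, then $\beta_{p,q}(k[V(H)]/I(H)) = \beta_{p,q}(k[V(H^*)]/I(H^*))$.}

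To establish the local statement, enlarge the polynomial ring to $\tilde{R} = k[V(H^*) \cup \{v\}]$. By faithful flatness of $k[V(H^*)] \hookrightarrow \tilde{R}$, one has $\beta_{p,q}(k[V(H^*)]/I(H^*)) = \beta_{p,q}(\tilde{R}/I(H^*)\tilde{R})$. Introduce the linear forms $L_i := v_i - v$ for $i=1,\ldots,d$. Under the ring isomorphism $\tilde{R}/(L_1,\ldots,L_d) \xrightarrow{\sim} k[V(H)]$ given by $v_i \mapsto v$, the ideal $I(H^*)\tilde{R}$ is sent exactly onto $I(H)$, since each generator $v_i u_i$ maps to the edge $v u_i$ of $H$. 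If the $L_i$ form a regular sequence on $\tilde{R}/I(H^*)\tilde{R}$, then quotienting by them one at a time preserves graded Betti numbers — the tensored minimal resolution remains minimal because each $L_i$ is linear — and this yields $\beta_{p,q}(\tilde{R}/I(H^*)\tilde{R}) = \beta_{p,q}(k[V(H)]/I(H))$, proving the local statement.

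The regularity of $L_1,\ldots,L_d$ is verified inductively. The base case holds because $v$ is a fresh variable not appearing in $I(H^*)\tilde{R}$, so $v$ lies in no associated prime and hence neither does $L_1 = v_1 - v$. Suppose $L_1,\ldots,L_{s-1}$ is regular; then $A_{s-1} := \tilde{R}/(I(H^*)\tilde{R},L_1,\ldots,L_{s-1})$ is the coordinate ring of the graph $H_{s-1}$ obtained from $H^*$ by collapsing $v_1,\ldots,v_{s-1}$ into a single vertex $v$ (which thereby becomes adjacent to $u_1,\ldots,u_{s-1}$), with $v_s,\ldots,v_d$ remaining as pendants. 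If $L_s = v_s - v$ lay in an associated prime, some minimal vertex cover $C$ of $H_{s-1}$ would contain both $v_s$ and $v$. Minimality forces $u_s \notin C$ (else the pendant $v_s$ would be redundant) and $u_i \notin C$ for some $i < s$ (else $v$ would be redundant, as every edge incident to $v$ in $H_{s-1}$ runs to some $u_{i'}$ with $i' < s$). But $u_i u_s$ is an edge of the clique $K$ on $\{u_1,\ldots,u_d\}$, which survives intact in $H_{s-1}$, and $C$ fails to cover it — a contradiction. Hence $L_s$ is regular.

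The main obstacle of the plan is precisely this regularity verification: the simplicial hypothesis enters decisively in the fact that the clique $K = N_H(v)$ is what forbids $v$ and any pendant $v_s$ from appearing together in a minimal vertex cover, and thus it is what guarantees the regular sequence property. Once the local statement is established, its iterative application across the cliques $H_i^j$ produces the desired equality $\beta_{p,q}(S/I(G\circ\mathcal{H})) = \beta_{p,q}(S'/I(G'))$, the simplicial status of the unaffected $y_{m_{i,j'},m_{i,j'}}$ being preserved at each step because edge-splittings carried out in other cliques do not alter their neighborhoods.
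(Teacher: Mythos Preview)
Your proof is correct. Both your argument and the paper's establish the same local step---that replacing a simplicial vertex $v$ by pendants at each of its neighbors preserves graded Betti numbers---and then iterate over the cliques $H_i^j$; the difference lies in how that local step is proved. The paper mods $S/I(G\circ\mathcal{H})$ out by the \emph{single} linear form $\ell = x_1 + y_{m_{1,1},1} + \cdots + y_{m_{1,1},m_{1,1}}$ (a non-zero-divisor because no minimal vertex cover can contain the entire closed neighborhood $N[y_{m_{1,1},m_{1,1}}]$), obtaining a non-squarefree ideal with generators $x_1^2, y_{m_{1,1},k}^2$; it then polarizes to recover a squarefree edge ideal with the new pendants, invoking the standard fact that polarization preserves Betti numbers. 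Your route instead adjoins the extra variable $v$ to $k[V(H^*)]$ and mods out by the regular sequence $v_i - v$, never leaving the realm of squarefree edge ideals. In effect you are carrying out by hand the regular-sequence argument that underlies polarization, but applied directly between the two edge ideals rather than via a depolarized intermediary; your minimal-vertex-cover argument for regularity of $L_s$ is precisely where the simpliciality hypothesis (the clique on $u_1,\ldots,u_d$) does its work, paralleling the paper's use of the closed-neighborhood structure to see that $\ell$ is a non-zero-divisor. Your approach is a bit more self-contained (no polarization black box), while the paper's is shorter once polarization is granted. One minor quibble: the neighborhoods of $y_{m_{i,j},m_{i,j}}$ for fixed $i$ and varying $j$ all contain $x_i$ and so are not literally disjoint; what matters, and what you correctly invoke in the next clause, is that an edge-splitting in one clique leaves the induced subgraph on every other such neighborhood intact, so simpliciality persists.
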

\begin{proof}

Notice that $x_{1}+y_{m_{1,1}, 1}+\cdots+y_{m_{1,1}, m_{1,1}}$ is a non-zero divisor of $S/I(G\circ\mathcal{H})$ since $N(y_{m_{1,1}, j})=\{x_{1}, y_{m_{1,1}, 1},\ldots, y_{m_{1,1}, m_{1,1}}\}$. Set $$G\circ\mathcal{H}^{[1]}=G\circ\mathcal{H}-\{\{u,y_{m_{1,1}, m_{1,1}}\}\in E(G\circ\mathcal{H})\,\,: u\in N(y_{m_{1,1}, m_{1,1}})\}.$$ 
Then, we have the following isomorphism: 
\begin{align*}
&S/(I(G\circ\mathcal{H})+(x_{1}+y_{m_{1,1}, 1}+\cdots+y_{m_{1,1}, m_{1,1}})) \\
&\simeq\Bbbk[V(G\circ\mathcal{H})\setminus\{y_{m_{1,1}, m_{1,1}}\}]/(I(G\circ\mathcal{H}^{[1]})+(x_{1}^2,y_{m_{1,1}, 1}^2,\ldots,  y_{m_{1,1}, m_{1,1}-1}^2)) 
\end{align*}
Let $S^{[1]}=\Bbbk[V(G\circ\mathcal{H})\setminus\{y_{m_{1,1}, m_{1,1}}\}]$ and $I^{[1]}=I(G\circ\mathcal{H}^{[1]})+(x_{1}^2,y_{m_{1,1}, 1}^2,\ldots,  y_{m_{1,1}, m_{1,1}-1}^2)$. 
By applying polarization to the latter one, we obtain the ring $(S^{[1]})^{\rm pol}/(I^{[1]})^{\rm pol}$. Notice that 
$$(S^{[1]})^{\rm pol}=\Bbbk[(V(G\circ\mathcal{H})\setminus\{y_{m_{1,1}, m_{1,1}}\})\cup\{x_{1}^{(1)}, x_{1}^{(2)}, y_{m_{1,1}, 1}^{(1)},\cdots, y_{m_{1,1}, m_{1,1}-1}^{(2)}\}]$$ and 
$$(I^{[1]})^{\rm pol}=I(G\circ\mathcal{H})+(x_{1}^{(1)}x_{1}^{(2)}, y_{m_{1,1}, 1}^{(1)}y_{m_{1,1}, 1}^{(2)}, y_{m_{1,1}, m_{1,1}-1}^{(1)}y_{m_{1,1}, m_{1,1}-1}^{(2)}).$$ 
We identify $x_{1}$ with $x_{1}^{(1)}$ and 
$z_{m_{i,1},0}$ with $x_{1}^{(2)}$. 
Also we identify $y_{m_{1,1}, 1}^{(1)}$ with $y_{m_{1,1}, j}$ and 
$z_{m_{1,1},j}$ with $y_{m_{1,1}, j}^{(2)}$
Since polarization preserve the graded Betti number, we have $$\beta_{p,q}(S/I(G\circ\mathcal{H}))=\beta_{p,q}((S^{[1]})^{\rm pol}/(I^{[1]})^{\rm pol})\mbox{ for all }p,q.$$ 
By repeating this process and finally get $S^{\prime}/I(G^{\prime})$ and 
$$\beta_{p,q}(S/I(G\circ\mathcal{H}))=\beta_{p,q}(S^{\prime}/I(G^{\prime}))\mbox{ for all }p, q, $$
as required. 
\end{proof}

As a corollary with \cite[Corollary 4.4]{mpt}, we obtain the regularity of $I(G\circ\mathcal{H})$. 

\begin{cor}\label{reg}
Let $G$ be a graph on the vertex set $X_{[h]}$ and positive integers $n_{1},\ldots, n_{h}$ and let $\mathcal{H}=\{H_{i}^{1},\ldots, H_{i}^{n_{i}}\,\,: x_{i}\in V(G)\}$. If $G\circ\mathcal{H}$ is multi-clique corona graph, then we have
\begin{align*}
{\rm reg}(\Bbbk[G\circ\mathcal{H}]/I(G\circ\mathcal{H}))&={\rm im}(G\circ\mathcal{H}) \\
&=|\{H_{i}^{j}\,\,: H_{i}^{j}\mbox{ is not the complete graph }K_{1}\}|. 
\end{align*}
\end{cor}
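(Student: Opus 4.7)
The plan is to combine Theorem \ref{betti} with the known regularity formula for multi-whisker graphs and then verify the combinatorial identification of the induced matching number. First, Theorem \ref{betti} gives $\beta_{p,q}(S/I(G\circ\mathcal{H})) = \beta_{p,q}(S'/I(G'))$ for all $p,q$, where $G'$ is the multi-whisker graph constructed in Notation \ref{split2}. Since the Castelnuovo--Mumford regularity is read off the graded Betti table, this immediately yields ${\rm reg}(S/I(G\circ\mathcal{H})) = {\rm reg}(S'/I(G'))$.

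Because $G'$ is a multi-whisker graph in the sense of Definition \ref{def of multi}, I would then apply \cite[Corollary 4.4]{mpt} to obtain ${\rm reg}(S'/I(G')) = {\rm im}(G')$. Setting $N := |\{H_i^j : H_i^j \neq K_1\}|$, it remains to establish the combinatorial identities ${\rm im}(G\circ\mathcal{H}) = N = {\rm im}(G')$.

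For the lower bounds, I would choose, for each pair $(i,j)$ with $m_{i,j}=|V(H_i^j)|\geq 2$, a single internal edge of the clique $H_i^j$ in $G\circ\mathcal{H}$, and the pendant edge $\{y_{m_{i,j},1},z_{m_{i,j},1}\}$ in $G'$. Since distinct cliques have disjoint vertex sets and the only external neighbor of a clique vertex in $G\circ\mathcal{H}$ is $x_i$ (respectively, in $G'$ the only external neighbors of $y_{m_{i,j},k}$ are $x_i$, the other $y_{m_{i,j},\ell}$, and the pendant $z_{m_{i,j},k}$), each of these two collections forms an induced matching of size $N$.

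For the upper bounds, in both graphs any clique $H_i^j$ (respectively, its $G'$-analog $K_{m_{i,j}-1}$) contributes at most one edge to an induced matching, since a complete graph contains no induced $2$-matching. Moreover, any matching edge incident to $x_i$ forces all internal clique edges at cliques based at $x_i$ to be excluded, because the induced subgraph would otherwise contain an additional edge $\{x_i,y\}$ with $y\in V(H_i^j)$. The main obstacle I anticipate is the bookkeeping in this case analysis, in particular when a base edge $\{x_i,x_{i'}\}\in E(G)$ or a whisker edge $\{x_i,z_{m_{i,j},0}\}\in E(G')$ is forced into the matching; each such choice, however, only substitutes for one of the $N$ canonical choices and so cannot exceed $N$. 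Piecing the three inputs together gives
\[
{\rm reg}(S/I(G\circ\mathcal{H})) \;=\; {\rm reg}(S'/I(G')) \;=\; {\rm im}(G') \;=\; N \;=\; {\rm im}(G\circ\mathcal{H}),
\]
which is the assertion of the corollary.
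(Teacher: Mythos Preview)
Your derivation ${\rm reg}(S/I(G\circ\mathcal H))={\rm reg}(S'/I(G'))={\rm im}(G')$ via Theorem~\ref{betti} and \cite[Corollary~4.4]{mpt} is exactly the paper's argument. The paper then finishes by asserting ${\rm im}(G')={\rm im}(G\circ\mathcal H)$ directly, never passing through the count $N$; the final displayed equality with $N$ is not addressed in the paper's proof at all.

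Your route through $N$ has a genuine gap at the upper-bound step, and in fact the equality ${\rm im}(G\circ\mathcal H)=N$ is false without further hypotheses. The claim that a matching edge incident to $x_i$ ``only substitutes for one of the $N$ canonical choices'' breaks down whenever every clique at $x_i$ is a $K_1$: such an $x_i$ contributes nothing to $N$, yet an edge at $x_i$ can coexist in an induced matching with internal clique edges based at other vertices. Concretely, take $G=K_2$ on $\{x_1,x_2\}$, $n_1=n_2=1$, $H_1^1=K_2$ on $\{y_1,y_2\}$ and $H_2^1=K_1$ on $\{y_3\}$; then $N=1$, but $\{y_1y_2,\ x_2y_3\}$ is an induced matching of size $2$ in $G\circ\mathcal H$, and one checks ${\rm reg}(S/I(G\circ\mathcal H))={\rm im}(G')=2$ here as well. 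More dramatically, if every $H_i^j=K_1$ then $N=0$ while ${\rm im}(G\circ\mathcal H)\ge 1$ whenever $G\circ\mathcal H$ has an edge. Your bookkeeping does go through under the additional assumption $m_{i,j}\ge 2$ for all $i,j$, since then each $x_i$ carries $n_i\ge 1$ canonical internal-clique edges and any matching edge meeting $x_i$ genuinely displaces at least one of them; but as stated the chain ${\rm im}(G')=N={\rm im}(G\circ\mathcal H)$ cannot be completed, and with it your argument for the first equality ${\rm reg}={\rm im}(G\circ\mathcal H)$ collapses too, because you reach ${\rm im}(G\circ\mathcal H)$ only through $N$.
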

\begin{proof}
By Theorem \ref{betti} and \cite[Corollary 4.4]{mpt}, we have 
$${\rm reg}(S/I(G\circ\mathcal{H}))={\rm reg}S^{\prime}/I(G^{\prime})= {\rm im}(G^{\prime})= {\rm im}(G\circ\mathcal{H}),$$which completes the proof. 
\end{proof}

The next theorem is a generalization of the result about multi-whisker graphs \cite[Theorem 6.1]{mpt}. 
\begin{thm}\label{seq CM}
Let $G$ be a graph on the vertex set $X_{[h]}$ and positive integers $n_{1},\ldots, n_{h}$ and let $\mathcal{H}=\{H_{i}^{1},\ldots, H_{i}^{n_{i}}\,\,: x_{i}\in V(G)\}$. If $G\circ\mathcal{H}$ is multi-clique corona graph, then $G\circ\mathcal{H}$ is vertex decomposable and hence, sequentially Cohen--Macaulay. 
\end{thm}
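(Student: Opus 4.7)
The plan is to prove vertex decomposability directly by strong induction on $|V(G\circ\mathcal{H})|$; sequential Cohen--Macaulayness then follows automatically from the standard implication that vertex decomposable simplicial complexes are sequentially Cohen--Macaulay. For the base of the induction, if every $H_i^j$ is the single-vertex graph $K_1$, then $G\circ\mathcal{H}$ is a multi-whisker graph in the sense of Definition \ref{def of multi}, which is vertex decomposable by \cite[Theorem 6.1]{mpt}.

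For the inductive step, assume instead that some $H_{i_0}^{j_0}\cong K_m$ with $m\geq 2$, and choose any vertex $y\in V(H_{i_0}^{j_0})$. I will show that $y$ is a shedding vertex of $\Delta=\Delta(G\circ\mathcal{H})$ whose link and deletion are both vertex decomposable. For the shedding condition, let $A$ be any maximal independent set of $G\circ\mathcal{H}-y$ and fix a second vertex $y'\in V(H_{i_0}^{j_0})\setminus\{y\}$; by maximality either $y'\in A$ or $A$ contains a neighbor of $y'$, which necessarily lies in $(V(H_{i_0}^{j_0})\setminus\{y'\})\cup\{x_{i_0}\}$. In every case $A$ meets $N_{G\circ\mathcal{H}}(y)=(V(H_{i_0}^{j_0})\setminus\{y\})\cup\{x_{i_0}\}$, so $A\cup\{y\}$ is not independent in $G\circ\mathcal{H}$ and $A$ is already maximal in $\Delta$. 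For the deletion, $G\circ\mathcal{H}-y$ is again a multi-clique corona graph (with $H_{i_0}^{j_0}$ now equal to $K_{m-1}$) on strictly fewer vertices, so vertex decomposability follows from the inductive hypothesis. For the link, since $N[y]=V(H_{i_0}^{j_0})\cup\{x_{i_0}\}$, the graph $G\circ\mathcal{H}-N[y]$ is the vertex-disjoint union of $(G-x_{i_0})\circ\mathcal{H}'$, where $\mathcal{H}'=\bigcup_{i\neq i_0}\{H_i^1,\ldots,H_i^{n_i}\}$, together with the cliques $H_{i_0}^{j}$ for $j\neq j_0$ (each now an isolated complete-graph component, since $x_{i_0}$ has been removed). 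Its independence complex is therefore the join of the component independence complexes; the first factor is vertex decomposable by the inductive hypothesis, and each $\Delta(K_{m'})$ is trivially vertex decomposable, so ${\rm link}_{\Delta}(\{y\})$ is vertex decomposable by the standard fact that joins preserve vertex decomposability.

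The main difficulty I expect is in the link step: one must carefully verify that removing $N[y]$ really severs each of the cliques $H_{i_0}^{j}$ ($j\neq j_0$) from the rest of $G\circ\mathcal{H}$---this is exactly where the hypothesis that all attached graphs are complete is used---and one must confirm that the residual $(G-x_{i_0})\circ\mathcal{H}'$ still fits the framework of the inductive hypothesis even when $G-x_{i_0}$ happens to be disconnected or to have isolated vertices. The shedding check itself also relies critically on the availability of a second vertex $y'$ inside $H_{i_0}^{j_0}$, which is precisely why the pure multi-whisker case has to be handled separately via \cite[Theorem 6.1]{mpt}.
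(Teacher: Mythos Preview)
Your proof is correct and follows the same inductive scheme as the paper: induct on $|V(G\circ\mathcal{H})|$, exhibit a shedding vertex inside an attached clique, and verify that both the link and the deletion are (joins of) multi-clique corona graphs and complete graphs on fewer vertices. The one substantive difference is how the induction is anchored. The paper separates off the case where all $n_i=1$ (the ordinary clique corona case, handled by \cite[Theorem~1.1]{hhko}) and in the remaining case picks the shedding vertex in a clique attached to some $x_h$ with $n_h\geq 2$; you instead separate off the case where every $H_i^j=K_1$ (the multi-whisker case, handled by \cite[Theorem~6.1]{mpt}) and pick the shedding vertex inside a clique $K_m$ with $m\geq 2$. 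Your choice makes the shedding verification particularly transparent---the companion vertex $y'\in K_m\setminus\{y\}$ immediately forces every maximal independent set of $G\circ\mathcal{H}-y$ to meet $N(y)$---whereas the paper's shedding argument relies on the presence of the other cliques at $x_h$. The link and deletion analyses are otherwise identical in spirit; the difference is purely in which previously known special case one falls back to.
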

\begin{proof}
We prove by induction on $|V(G\circ\mathcal{H})|$. If $|V(G\circ\mathcal{H})|=2$, then $G\circ\mathcal{H}$ is just the complete graph $K_{2}$ and it is vertex decomposable. Assume that $|V(G\circ\mathcal{H})|>2$. If $G\circ\mathcal{H}$ is clique corona graph, then we are done by \cite[Theorem 1.1]{hhko}. Hence, we may assume that there  exists an integer ${i}$ such that $n_{i}\geq 2$. Without loss of generality, we may assume that $i=h$. Set $H_{h}^{\ell}=K_{m_{h,\ell}}$, where $K_{m_{h,\ell}}$ is the complete graph with $m_{h,\ell}$ vertices. Also, we fix a vertex $y_{m_{h,n_{h}}, m_{h,n_{h}}}\in V(K_{m_{h, n_{h}}})$. 
Then, the link ${\rm link}_{\Delta(G\circ\mathcal{H})}y_{m_{h,n_{h}}, m_{h,n_{h}}}$ is 
$$\Delta(G\circ\mathcal{H}-\{x_{h}, y_{m_{h,n_{h}},\ell}\,\,: 1\leq l\leq n_{h} \})*\Delta(K_{m_{h,1}})*\cdots*\Delta(K_{m_{h,n_{h}-1}}).$$ 
Notice that $G\circ\mathcal{H}-\{x_{h}, y_{m_{h,n_{h}},\ell}\,\,: 1\leq l\leq n_{h} \}$ is a multi-clique corona graph whose number of vertices is less than $|V(G\circ\mathcal{H})|$. Hence, by the induction hypothesis and the fact that complete graphs are vertex decomposable, ${\rm link}_{\Delta(G\circ\mathcal{H})}y_{m_{h,n_{h}}, m_{h,n_{h}}}$ is vertex decomposable. 
Also, we have ${\rm del}_{\Delta(G\circ\mathcal{H})}y_{m_{h,n_{h}}, m_{h,n_{h}}}=\Delta(G\circ\mathcal{H}-y_{j_{h},n_{h}})$ and $G\circ\mathcal{H}-y_{m_{h,n_{h}}, m_{h,n_{h}}}$ is the multi-clique corona graph with the number of vertices $<|V(G\circ\mathcal{H})|$, by the induction hypothesis, ${\rm del}_{\Delta(G\circ\mathcal{H})}y_{m_{h,n_{h}}, m_{h,n_{h}}}$ is vertex decomposable. Moreover, any independent set of $G\circ\mathcal{H}-N[y_{m_{h,n_{h}}, m_{h,n_{h}}}]$ is not a maximal independent set of $G\circ\mathcal{H}-y_{m_{h,n_{h}}, m_{h,n_{h}}}$, and hence $\mathcal{F}({\rm del}_{\Delta(G\circ\mathcal{H})}y_{m_{h,n_{h}}, m_{h,n_{h}}})\subset\mathcal{F}(\Delta(G\circ\mathcal{H}))$. Therefore, $\Delta(G\circ\mathcal{H})$ is vertex decomposable, as required. 
\end{proof}

\begin{cor}\label{pd}
Let $G$ be a graph on the vertex set $X_{[h]}$ and positive integers $n_{1},\ldots, n_{h}$ and let $\mathcal{H}=\bigcup_{1\leq i\leq h}\{H_{i}^{1},\ldots, H_{i}^{n_{i}}\}$. If $G\circ\mathcal{H}$ is multi-clique corona graph, then 
$${\rm pd}\hspace{0.05cm}\Bbbk[G\circ\mathcal{H}]/I(G\circ\mathcal{H})={\rm bight}I(G\circ\mathcal{H}).$$
\end{cor}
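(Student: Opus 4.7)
The plan is to deduce this as a direct corollary of Theorem \ref{seq CM}, which already establishes that $G\circ\mathcal{H}$ is sequentially Cohen--Macaulay. The key fact I would invoke is the general principle that, for any squarefree monomial ideal $I \subset S$ such that $S/I$ is sequentially Cohen--Macaulay, one has $\mathrm{pd}(S/I) = \mathrm{bight}(I)$. Applying this to $I = I(G\circ\mathcal{H})$ immediately gives the claim.

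To justify the general principle, I would argue as follows. By the Auslander--Buchsbaum formula, $\mathrm{pd}(S/I) = n - \mathrm{depth}(S/I)$, where $n$ is the number of variables of $S$. So the task is to show $\mathrm{depth}(S/I) = n - \mathrm{bight}(I)$ when $S/I$ is sequentially Cohen--Macaulay. Let
\[ 0 = M_0 \subset M_1 \subset \cdots \subset M_t = S/I \]
be a sequentially Cohen--Macaulay filtration with $d_i := \dim(M_i/M_{i-1})$ strictly increasing. Applying the depth lemma to each short exact sequence $0 \to M_{i-1} \to M_i \to M_i/M_{i-1} \to 0$ inductively yields $\mathrm{depth}(S/I) \ge d_1$, and conversely every associated prime of $M_1$ is an associated prime of $S/I$, forcing $\mathrm{depth}(S/I) \le d_1$. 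Hence $\mathrm{depth}(S/I) = d_1$. Since $I$ is a squarefree monomial ideal it is radical, so the associated primes of $S/I$ are exactly its minimal primes; thus $d_1$ equals the minimum of $n - \mathrm{ht}(P)$ over minimal primes $P$, which is $n - \mathrm{bight}(I)$.

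Combining these, $\mathrm{pd}(S/I(G\circ\mathcal{H})) = n - \mathrm{depth}(S/I(G\circ\mathcal{H})) = \mathrm{bight}(I(G\circ\mathcal{H}))$. There is essentially no obstacle here: all of the graph-theoretic and combinatorial work has already been done in Theorem \ref{seq CM}, and what remains is the standard homological passage from sequential Cohen--Macaulayness to the projective dimension formula via Auslander--Buchsbaum.
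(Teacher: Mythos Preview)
Your proposal is correct and follows the same high-level strategy as the paper: both derive the formula from Theorem~\ref{seq CM} by appealing to the principle that sequential Cohen--Macaulayness of $S/I$ yields $\mathrm{pd}(S/I)=\mathrm{bight}(I)$ when $I$ is a squarefree monomial ideal. The paper's proof dispatches this last step by citing \cite[Remark~5.7]{ds}, after noting the condition $F\cup N(F)=V(G\circ\mathcal{H})$ for every maximal independent set $F$; you instead supply a direct homological argument via Auslander--Buchsbaum and the dimension filtration. Your version is more self-contained and makes the mechanism explicit, while the paper's is terser but relies on the external reference; the underlying idea is the same.
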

\begin{proof}
Notice that $F\cup N(F)=V(G\circ\mathcal{H})$ holds for any maximal independent set $F$ of $G\circ\mathcal{H}$. Therefore, By \cite[Remark 5.7]{ds} and Theorem \ref{seq CM}, the assertion follows. 
\end{proof}

%%%%%%%%%%%%%%%%%%%%%%%%%%%%%%%%%%%%%%%%%%%%%%%%%%%%%%%%%%%%%%%%%%%%%%%%%%%%%%%%%%%%%%%%%

\section*{Acknowledgement}
The authors would like to express their sincere gratitude to Seyed Amin Seyed Fakhari for kindly informing them of the paper \cite{mfy} on edge ideals of rooted products of graphs. They are also deeply grateful to Kyouko Kimura for her insightful suggestions, which inspired the concept of multi-clique corona graphs through discussions at several conferences. Finally, the authors would like to thank Tatsuya Kataoka for his helpful and valuable comments. The research of Yuji Muta was partially supported by ohmoto-ikueikai and JST SPRING Japan Grant Number JPMJSP2126. The research of Naoki Terai was partially supported by JSPS Grant-in Aid for Scientific Research (C) 24K06670.

%%%%%%%%%%%%%%%%%%%%%%%%%%%%%%%%%%%%%%%%%%%%%%%%%%%%%%%%%%%%%%%%%%%%%%%%%%%%%%%%%%%%%%%%%

%%%%%%%%%%%%%%%%%%%%%%%%%%%%%%%%%%%%%%%%%%%%%%%%%%%%%%%%%%%%%%%%%%%%%%%%%%

\end{document}